\definecolor{newblue}{rgb}{0.2, 0.3, 0.85}
\numberwithin{equation}{section}
\definecolor{dgreen}{rgb}{0.0, 0.56, 0.0}
\newcommand{\N}{\ensuremath{\mathbb N}}
\newcommand{\Q}{\ensuremath{\mathbb Q}}
\newcommand{\R}{\ensuremath{\mathbb R}}
\newcommand{\meas}{\mathfrak{m}}
\newcommand{\lip}{{\rm lip \,}}
\newcommand{\res}{\mathbin{\vrule height 1.6ex depth 0pt width
0.13ex\vrule height 0.13ex depth 0pt width 1.3ex}} 
\DeclarePairedDelimiter\abs{\lvert}{\rvert}
\DeclarePairedDelimiter\scal{\langle}{\rangle}
\newcommand{\st}{\ensuremath{\ :\ }} 
\newcommand{\eqdef}{\ensuremath{\vcentcolon=}}
\newcommand \eps{\ensuremath{\varepsilon}} 
\renewcommand{\epsilon}{\varepsilon}
\newcommand{\de}{\ensuremath{\,\mathrm d}} 
\renewcommand{\d}{\ensuremath{\mathrm d}} 
\DeclareMathOperator{\supp}{spt} 
\DeclareMathOperator{\sn}{sn}
\newcommand{\CD}{\mathsf{CD}}
\newcommand{\RCD}{\mathsf{RCD}}
\newcommand{\dist}{\mathsf{d}}
\let\div\undefined
\DeclareMathOperator{\div}{div}
\newcommand{\ric}{\ensuremath{\mathrm{Ric}}} 
\newcommand{\sect}{\ensuremath{\mathrm{Sect}}} 
\DeclareMathOperator{\vol}{vol}
\DeclareMathOperator{\inj}{inj}
\theoremstyle{plain}
\newtheorem{thm}{Theorem}[section] 
\theoremstyle{plain}
\theoremstyle{plain}
\newtheorem{prop}[thm]{Proposition}
\theoremstyle{plain}
\newtheorem{lemma}[thm]{Lemma}
\theoremstyle{plain}
\newtheorem{cor}[thm]{Corollary}
\theoremstyle{definition}
\newtheorem{defn}[thm]{Definition} 
\theoremstyle{definition}
\newtheorem{remark}[thm]{Remark}
\theoremstyle{definition}
\newtheorem{example}[thm]{Example}
\theoremstyle{definition}
\title{The isoperimetric problem on Riemannian manifolds via Gromov--Hausdorff asymptotic analysis}
\author{Gioacchino Antonelli\footnote{\href{mailto:gioacchino.antonelli@sns.it}{gioacchino.antonelli@sns.it}, Scuola Normale Superiore, Piazza dei Cavalieri, 7, 56126 Pisa, Italy.}
\and Mattia Fogagnolo\footnote{\href{mailto:mattia.fogagnolo@sns.it}{mattia.fogagnolo@sns.it}, Centro di Ricerca Matematica Ennio De Giorgi, Scuola Normale Superiore, Piazza dei Cavalieri, 3, 56126 Pisa, Italy.}
\and Marco Pozzetta\footnote{\href{mailto:marco.pozzetta@unina.it}{marco.pozzetta@unina.it}, Dipartimento di Matematica e Applicazioni, Universit\`a di Napoli Federico II, Via Cintia, Monte S. Angelo, 80126 Napoli, Italy.}}
\date{\today}
\begin{document}

\maketitle

\begin{abstract} 
In this paper we prove the existence of isoperimetric regions of any volume in Riemannian manifolds with Ricci bounded below assuming Gromov--Hausdorff asymptoticity to the suitable simply connected model of constant sectional curvature.

The previous result is a consequence of a general structure theorem for perimeter-minimizing sequences of sets of fixed volume on noncollapsed Riemannian manifolds with a lower bound on the Ricci curvature. We show that, without assuming any further hypotheses on the asymptotic geometry, all the mass and the perimeter lost at infinity, if any, are recovered by at most countably many isoperimetric regions sitting in some (possibly nonsmooth) Gromov--Hausdorff limits at infinity.

The Gromov--Hausdorff asymptotic analysis allows us to recover and extend different previous existence theorems. 

While studying the isoperimetric problem in the smooth setting, the nonsmooth geometry naturally emerges, and thus our treatment combines techniques from both the theories.
\end{abstract}

\noindent\textbf{MSC (2020).} Primary: 49J45, 26B30, 53A35. Secondary: 53C23, 49J52. \\
\textbf{Keywords.} Gromov--Hausdorff convergence, isoperimetric problem,  Ricci curvature, RCD spaces, finite perimeter sets.

\tableofcontents

    
    
    


\section{Introduction}

The classical isoperimetric problem can be formulated on every ambient space possessing notions of \emph{volume} and \emph{perimeter} on (some subclass of) its subsets. Among sets having assigned positive volume, the problem deals with finding those having least perimeter. Among the most basic questions in the context of the isoperimetric problem, one would naturally ask whether perimeter-minimizing sets exists, but also what goes wrong in the minimization process if such minimizers do not exist.
We are interested here in the isoperimetric problem set on  smooth Riemannian manifolds and in giving a good description of the minimization process.

We denote by $(M^n,g)$ a Riemannian manifold of dimension $n$ and metric tensor $g$. \emph{We will always assume, unless specified differently, that $n \geq 2$}. The symbols $\dist, \vol, P$ denote the geodesic distance, the volume measure, and the perimeter functional induced by $g$.
In such a framework, the isoperimetric problem consists in the minimization problem
\[
\min \left\{ P(E) \st \vol(E) = V \right\},
\]
for fixed $V \in (0,\vol(M^n))$, where the competitors $E\subset M^n$ are finite perimeter sets on $(M^n,g)$. The infimum of the perimeter $P(E)$ among such competitors of given volume $V$ is called \emph{isoperimetric profile of $M^n$ at $V$} and it is commonly denoted by $I_{(M^n,g)}(V)$. If $\vol(E)=V$ and $P(E)=I_{(M^n,g)}(V)$, hence if $E$ solves the isoperimetric problem for its own volume, we will say that $E$ is an \emph{isoperimetric region} (or an isoperimetric set).

Unless otherwise stated we will also always assume that
\begin{equation}\label{eqn:H}
\text{$(M^n,g)$ \emph{is complete, noncompact, and has infinite volume.}}
\end{equation}
In fact, in case $(M^n,g)$ is compact an easy application of direct methods in Calculus of Variations provides the existence of isoperimetric regions for any volume in $(0,\vol(M^n))$; also, when $(M^n,g)$ is complete noncompact but with finite volume, the existence of isoperimetric regions for any volume is ensured by the application of \cite[Theorem 2.1 and Remark 2.3]{RitRosales04}.


A classical way for studying the isoperimetric problem at a given volume $V>0$ is to argue by means of direct methods in Calculus of Variations. So, for a given Riemannian manifold $(M^n,g)$ and a volume $V>0$, one considers a sequence of finite perimeter sets $\Omega_i\subset M^n$ with $\vol(\Omega_i)=V$ and $P(\Omega_i)\to I_{(M^n,g)}(V)$. It is well-known by the theory of finite perimeter sets that, up to subsequence, $\Omega_i$ converges in $L^1_{\rm loc}$ to a set $\Omega$, the perimeter is lower semicontinuous, but the volume of $\Omega$ might be strictly less than $V$. It is then common to try to understand the consequences of having lost part of the mass of the minimizing sequence at infinity. Indeed, under suitable assumptions on the geometry of $(M^n,g)$, one can try to infer that the potential leak of mass would be inconvenient, thus getting existence results. By the same approach, one can also grasp new information about the isoperimetric profile $I_{(M^n, g)}$.

\smallskip


Since the possible leak of mass of a minimizing sequence is due to the fact that the ambient $M^n$ is not compact, it has been spontaneous in the literature to assume a priori asymptotic assumptions on the manifold $(M^n,g)$. In \cite{Nar14}, Nardulli assumed that  $(M^n,g)$ is noncollapsed, that its Ricci curvature is bounded from below, and that 
for any sequence of points $p_i \in M^n$ there exists a pointed Riemannian manifold $(M^n_\infty,\dist_\infty,p_\infty)$  such that that $(M^n,\dist,p_i) \to (M^n_\infty,\dist_\infty,p_\infty)$ in a suitable pointed $C^{1,\alpha}$-sense. We recall that a Riemannian manifold $(M^n,g)$ is said to be \emph{noncollapsed} if there is $v_0>0$ such that $\vol(B_1(p)) \geq v_0$ for all $p\in M^n$. Here with $B_r(p)$ we denote the open ball of radius $r$ of center $p\in M^n$ according to the distance $\dist$. What he proved in \cite[Theorem 2]{Nar14} is that, under the latter asymptotic condition, a description of the mass lost at infinity in the previous minimization process can be given, more precisely showing that it is recovered by finitely many isoperimetric regions, each of them contained in one of the limit manifolds $(M^n_\infty,\dist_\infty,p_\infty)$.

On the other hand it turns out, see \cref{rem:GromovPrecompactness}, that the class of pointed uniformly noncollapsed manifolds of a given dimension having a uniform lower bound on the Ricci tensor is precompact with respect to pointed measure Gromov--Hausdorff (pmGH for short) convergence, see \cref{def:GHconvergence} for such a notion. Actually, the precompactness property holds at the level of $\RCD$ spaces, which are metric measure spaces with a synthetic lower bound on the Ricci tensor (see \cref{sub:RCD}), with a uniform bound from below on the measure of unit balls. This means that given any sequence of points $p_i$ on a noncollapsed manifold $(M^n,g)$ with Ricci bounded below, the sequence of pointed metric measure spaces $(M^n,\dist,\vol,p_i)$ converges in the pmGH sense to a pointed $\RCD$ space, up to a subsequence. Let us point out that, as a consequence of the celebrated volume convergence theorem in \cite{Colding97, DePhilippisGigli18}, the measure on such a limit is the Hausdorff measure of dimension $n$ with respect to the corresponding distance.
Eventually one may hope for a description analogous to the one mentioned above, coming from \cite{Nar14}, without further assumptions on $(M^n,g)$ but the noncollapsedness and a lower bound on the Ricci tensor, exploiting the pmGH precompactness in order to give a description of the lost mass.

In fact, the first of our main results is the following theorem which precisely states that minimizing sequences $\Omega_i$ of a given volume $V$ split into a ``converging'' part $\Omega_i^c$ and into at most countably many ``diverging'' parts $\Omega_{i,j}^d$ that converge in a suitable sense to isoperimetric regions in pmGH limit $\RCD$ spaces. Moreover, the limits of $\Omega_i^c$ and of each $\Omega_{i,j}^d$ recover the assigned volume $V$ and the isoperimetric profile of $(M^n,g)$ at $V$ (in the sense of \eqref{eq:UguaglianzeIntro} below). All in all, the forthcoming result gives a description of the asymptotic behavior of the diverging mass of minimizing sequences. We stress that the identification of the ``converging'' part $\Omega_i^c$ of a minimizing sequence, which is the starting point of our arguments, is a classical result due to Ritor\'{e}--Rosales \cite[Theorem 2.1]{RitRosales04}.


\begin{thm}[Asymptotic mass decomposition]\label{thm:MassDecompositionINTRO}
Let $(M^n,g)$ be a noncollapsed manifold as in \eqref{eqn:H}, such that $\ric\ge k$ for some $k\in(-\infty, 0]$, and let $V>0$. For every minimizing (for the perimeter) sequence $\Omega_i\subset M^n$ of volume $V$, with $\Omega_i$ bounded for any $i$, up to passing to a subsequence, there exist an increasing sequence $\{N_i\}_{i\in\mathbb N}\subseteq \mathbb N$, disjoint finite perimeter sets $\Omega_i^c, \Omega_{i,j}^d \subset \Omega_i$, and points $p_{i,j}$, with $1\leq j\leq N_i$ for any $i$, such that
\begin{itemize}
    \item $\lim_{i} \dist(p_{i,j},p_{i,\ell}) = \lim_{i} \dist(p_{i,j},o)=+\infty$, for any $j\neq \ell<\overline N+1$ and any $o\in M^n$, where $\overline N:=\lim_i N_i \in \N \cup \{+\infty\}$;
    \item $\Omega_i^c$ converges to $\Omega\subset M^n$ in the sense of finite perimeter sets (\cref{def:ConvergenceFinitePerimeter}), and we have $\vol(\Omega_i^c)\to_i \vol(\Omega)$, and $ P( \Omega_i^c) \to_i P(\Omega)$. Moreover $\Omega$ is a bounded isoperimetric region;
    \item for every $j<\overline N+1$, $(M^n,\dist,\vol,p_{i,j})$ converges in the pmGH sense  to a pointed $\RCD(k,n)$ space $(X_j,\dist_j,\mathcal{H}^n,p_j)$, where $\mathcal{H}^n$ on $X_j$ is the $n$-dimensional Hausdorff measure defined by the distance $\dist_j$. Moreover there are isoperimetric regions $Z_j \subset X_j$ such that $\Omega^d_{i,j}\to_i Z_j$ in $L^1$-strong (\cref{def:L1strong}) and $P(\Omega^d_{i,j}) \to_i P_{X_j}(Z_j)$;
    \item it holds that
    \begin{equation}\label{eq:UguaglianzeIntro}
    I_{(M^n,g)}(V) = P(\Omega) + \sum_{j=1}^{\overline{N}} P_{X_j} (Z_j),
    \qquad\qquad
    V=\vol(\Omega) +  \sum_{j=1}^{\overline{N}} \mathfrak{m}_j(Z_j).
    \end{equation}
\end{itemize}
\end{thm}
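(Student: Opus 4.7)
The plan is to combine the classical concentration--compactness of Ritor\'e--Rosales with the pmGH precompactness available for noncollapsed manifolds with Ricci bounded below, applied iteratively to the mass that escapes to infinity.

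First I would invoke \cite[Theorem 2.1]{RitRosales04} directly on the minimizing sequence $\Omega_i$: this yields a bounded finite perimeter set $\Omega\subset M^n$ with $\vol(\Omega)=:V_0\leq V$ that is an isoperimetric region for its own volume, together with a splitting $\Omega_i=\Omega_i^c\sqcup \widetilde\Omega_i^d$ such that $\Omega_i^c\to\Omega$ in the sense of finite perimeter sets with $P(\Omega_i^c)\to P(\Omega)$ and $\vol(\Omega_i^c)\to V_0$, while $\widetilde\Omega_i^d\to 0$ in $L^1_{\rm loc}(M^n)$. If $V_0=V$ we are done with $\overline N=0$; otherwise a positive mass $V-V_0>0$ has escaped.

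The core of the argument is an inductive extraction of diverging bubbles. Since noncollapsing gives $\vol(B_1(p))\geq v_0>0$ uniformly, and some escaped mass is positive, I can find a radius $r>0$ and points $p_{i,1}\in M^n$ with $\vol(\widetilde\Omega_i^d\cap B_r(p_{i,1}))$ bounded below uniformly; the $L^1_{\rm loc}$ vanishing of $\widetilde\Omega_i^d$ forces $\dist(p_{i,1},o)\to\infty$. Applying Gromov precompactness (\cref{rem:GromovPrecompactness}), up to subsequence $(M^n,\dist,\vol,p_{i,1})$ converges in pmGH sense to a pointed $\ncRCD(k,n)$ space $(X_1,\dist_1,\meas_1,p_1)$; Colding's volume convergence ensures the limit measure is the $n$-dimensional Hausdorff measure, so we land in $\ncRCD$. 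By stability of finite perimeter sets under pmGH convergence, one extracts $Z_1\subset X_1$ as the $L^1$-strong limit of a suitable bubble $\Omega_{i,1}^d$ cut out of $\widetilde\Omega_i^d$ near $p_{i,1}$ (via annular exhaustions, with radii chosen along the sequence to guarantee disjointness from $\Omega_i^c$ and from later bubbles). Iterating the same procedure on $\widetilde\Omega_i^d\setminus \Omega_{i,1}^d$ produces $p_{i,2},Z_2$, etc.; a diagonal argument gives $\dist(p_{i,j},p_{i,\ell})\to\infty$ for $j\neq\ell$, and the process continues up to some $\overline N\in\N\cup\{+\infty\}$ after which no further mass can be captured.

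The delicate step is to upgrade the lower semicontinuity obtained from pmGH convergence into the equalities \eqref{eq:UguaglianzeIntro} and into the fact that each $Z_j$ is actually isoperimetric in $X_j$ of volume $\meas_j(Z_j)$. I would argue by global balance: lower semicontinuity gives $\vol(\Omega)+\sum_j\meas_j(Z_j)\leq V$ and $P(\Omega)+\sum_j P_{X_j}(Z_j)\leq \liminf_i P(\Omega_i)=I_{(M^n,g)}(V)$. If some escaped mass were still missing in the bubbles, a further bubble could be extracted, contradicting the termination of the induction. If some $Z_j$ failed to be isoperimetric in $X_j$, one could replace it with a genuine isoperimetric region of the same volume in $X_j$, transplant it back to $M^n$ along the pmGH approximation with small perimeter error, and glue to $\Omega$ and to the remaining bubbles (using volume-adjusting perturbations of bounded perimeter cost, available thanks to the Ricci lower bound), producing a competitor on $M^n$ of volume $V$ with perimeter strictly less than $I_{(M^n,g)}(V)$, a contradiction. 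This forces all inequalities to be equalities and each $Z_j$ to be isoperimetric.

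The main obstacle I expect is precisely this transplant--and--glue step in the last paragraph: making rigorous how a set defined on an $\ncRCD$ limit $X_j$ can be approximated in $M^n$ with simultaneous control on volume and perimeter, while handling a potentially countable family of bubbles together with the bounded part $\Omega$ in a single competitor. This requires a careful use of the pmGH approximation together with the small-perimeter volume adjustments valid under the Ricci lower bound, and a diagonal extraction that turns the formally countable construction into a legitimate competing finite perimeter set on $M^n$.
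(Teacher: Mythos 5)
Your proposal follows essentially the same route as the paper: the Ritor\'e--Rosales splitting, an iterated concentration--compactness extraction of diverging bubbles (with vanishing excluded by a local mass lower bound coming from noncollapsing, the Ricci bound and a relative isoperimetric inequality), pmGH precompactness into $\ncRCD(k,n)$ limits, and a transplant--and--glue contradiction to force each $Z_j$ to be isoperimetric and to turn the semicontinuity inequalities into equalities. The obstacle you flag at the end is resolved in the paper exactly along the lines you suggest, with two specific devices worth knowing: the competitor is built from only \emph{finitely many} bubbles and the resulting volume deficit is absorbed via the continuity of the isoperimetric profile (rather than gluing a countable family), and the claim that no residual mass survives when $\overline N=+\infty$ is proved not by ``termination of the induction'' but by the quantitative local mass lower bound, which would force each of infinitely many bubbles to carry a definite amount of volume and hence make the total mass infinite.
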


Some comments about the above statement are in order. First of all, the fact that the sets of the minimizing sequence are assumed to be bounded does not undermine the generality because sets in the minimizing sequences for the isoperimetric problem can always be taken bounded by the approximation result recalled in \cref{rem:Approximation}. Also, in the above statement, 
the perimeter $P_{X_j}$ is the distributional perimeter on $(X,\dist_j,\mathcal{H}^n)$, see \cref{def:BVperimetro}. Moreover, the convergence in the $L^1$-strong sense in particular implies the convergence of the volumes of the sets, i.e., $\vol(\Omega_{i,j}^d) \to_i \mathcal{H}^n(Z_j)$. 

The above theorem is actually a simplification of a more detailed result, whose technical statement can be found in \cref{thm:MassDecomposition}. The main advantage of that complete formulation is the detailed construction of $\Omega_{i, j}^d$ from $\Omega_i^d\eqdef \Omega\setminus \Omega_i^c$, that is the diverging part of the minimizing sequence.

The \cref{thm:MassDecompositionINTRO} also implies that on noncollapsed manifolds with Ricci bounded below, the isoperimetric profile is strictly positive (see \cref{rem:ProfiloPositivo}).

Both the hypotheses of noncollapsedness and Ricci bounded below in \cref{thm:MassDecompositionINTRO} are necessary in order to guarantee some concentration of mass that eventually yields the nonempty limit sets $Z_j$. This is discussed in \cref{Sec:ExampleVanishing}, where we provide examples both of a collapsed manifold with sectional curvature bounded below and of a noncollapsed manifold with Ricci unbounded below in which minimizing sequences, in fact, avoid any concentration of mass, making impossible to formulate a result as the one in \cref{thm:MassDecompositionINTRO}.


Another class of spaces where a similar asymptotic decomposition of the minimizing sequences has been performed is that of the unbounded convex bodies in Euclidean spaces, treated in \cite{LeonardiRitore}.
It is possible to prove a decomposition result like \cref{thm:MassDecompositionINTRO} also without the assumption of having a minimizing sequence; more precisely, one can prove that an arbitrary sequence of sets with uniformly bounded volume and perimeter splits, up to subsequence, into subsets converging in $L^1_{\rm loc}$ to limit sets sitting either in $M$ or in some GH-limits at infinity. This yields a result of generalized compactness analogous to \cite{FloresNardulliCompactness}.


In view of \cref{thm:MassDecompositionINTRO}, one notices that the more is known about the GH-asymptotic structure of the manifold, the more information one gets about the minimizing sequence, and in turn about the isoperimetric problem. 
In the paper \cite{AntonelliBrueFogagnoloPozzetta2021}, where we focus on the nonnegative Ricci curved case, we apply indeed the above asymptotic decomposition in relation with the geometry of the \emph{asymptotic cones at infinity}. This is an analysis that clearly cannot overlook the generality reached in \cref{thm:MassDecompositionINTRO}. { Exploiting again the direct approach of the asymptotic mass decomposition, further results on the isoperimetry of manifolds with lower Ricci bounds have been recently obtained in \cite{AntonelliPasqualettoPozzettaSemolaFIRSThalf, AntonelliPasqualettoPozzettaSemolaSECONDhalf}.}

Here, we limit ourselves to deduce some existence theorems for isoperimetric regions when some precise structure at infinity is prescribed. 
To this end, we propose the following notion of GH-asymptoticity.

\begin{defn}[GH-asymptoticity]
\label{def:GHasymp}
Let $(M^n,g)$ be a noncompact Riemannian manifold with distance $\dist$ and volume measure $\vol$. We say that $(M^n,g)$ is \emph{Gromov--Hausdorff asymptotic, $\mathrm{GH}$-asymptotic for short, to a metric space}  $(X,\dist_X)$ if for any diverging sequence of points $q_i \in M^n$, i.e., such that $\dist(q,q_i)\to+\infty$ for any $q \in M^n$, there is $x_0\in X$ such that
\[
(M^n,\dist,q_i) \xrightarrow[i\to+\infty]{} (X,\dist_X, x_0),
\]
in the $\mathrm{pGH}$-sense (see \cref{def:GHconvergence}).

We say that $(M^n,g)$ is \emph{measure Gromov--Hausdorff asymptotic, $\mathrm{mGH}$-asymptotic for short, to a metric measure space}  $(X,\dist_X,\mathfrak{m}_X)$ if for any diverging sequence of points $q_i \in M^n$, there is $x_0\in X$ such that
\[
(M^n,\dist,\vol,q_i) \xrightarrow[i\to+\infty]{} (X,\dist_X,\mathfrak{m}_X, x_0),
\]
in the $\mathrm{pmGH}$-sense (see \cref{def:GHconvergence}).
\end{defn}

In the above definition, if $(X,\dist_X,\mathfrak{m}_X)$ is such that for every $x_1,x_2 \in X$ there is an isometry $\varphi:X\to X$ such that $\varphi(x_1)=\varphi(x_2)$ and $\varphi_\sharp\mathfrak{m}_X=\mathfrak{m}_X$, then $(M^n,g)$ is mGH-asymptotic to $(X,\dist_X,\mathfrak{m}_X)$ if for any diverging sequence of points $q_i \in M^n$, it occurs that
$(M^n,\dist,\vol,q_i) \to (X,\dist_X,\mathfrak{m}_X, x)$ for any $x \in X$. Loosely speaking, in such a case it does not matter the point at which the limit space is pointed.

We remark that the simply connected Riemannian manifolds of constant sectional curvature  satisfy the property 
above.

The following \cref{thm:MainRicNcGHIntro}
enables us to provide a full generalization of the existence result by Mondino--Nardulli \cite[Theorem 1.2]{MondinoNardulli16}, where the $C^0$-asymptoticity assumption therein is weakened with a GH-asymptoticity hypothesis here. For the next statement see \cref{thm:MainRicNcGH}.

\begin{thm}\label{thm:MainRicNcGHIntro}
Let $k\in(-\infty,0]$ and let $(M^n,g)$ be as in \eqref{eqn:H} such that $\ric\geq (n-1)k$ on $M\setminus\mathcal{C}$, where $\mathcal{C}$ is compact.

Suppose that $(M^n,g)$ is GH-asymptotic to the simply connected model of constant sectional curvature $k$ and dimension $n$. Then for any $V>0$ there exists an isoperimetric region of volume $V$ on $(M^n,g)$.

\end{thm}
As an example, if $(M^n, g)$ has nonnegative Ricci curvature and its asymptotic volume ratio, defined by
$$
\mathrm{AVR}(M^n,g):=\lim_{r\to+\infty}\frac{\vol(B_r(p))}{\omega_nr^n},
$$
is strictly positive, then it is GH-asymptotic to flat $\R^n$ each time any of its \emph{asymptotic cones at infinity}  has a smooth cross-section. This is proved in details in \cite[Theorem 4.3]{AntonelliBrueFogagnoloPozzetta2021}. 


\smallskip

Let us now quickly describe some other examples that satisfy the hypotheses of \cref{thm:MainRicNcGHIntro}.

We will notice in \cref{prop:C0Asymp} that, as a consequence of standard comparison theorems, a complete Riemannian manifold $(M^n,g)$ for which the injectivity radius diverges to $+\infty$ and the sectional curvature converges to $k\in\mathbb R$ at infinity, is GH-asymptotic (actually even $C^0$-asymptotic, see \cref{rem:GHandC0}) to the simply connected model of constant sectional curvature $k$ and dimension $n$. Hence, if on a manifold satisfying the latter assumption we have a lower bound $\ric\geq (n-1)k$ outside a compact set, \cref{thm:MainRicNcGHIntro} applies and we get the existence of isoperimetric regions for any volume. This is the case, for example, of ALE gravitational instantons (see \cite{Minerbe} and references therein for an account) and of the class of warped products described in \cref{rem:Warped}, which contains, for example, the Bryant type solitons (see \cite[Chapter 4, Section 6]{chow-ricci}  and references therein) and many other explicit solitons as those produced in \cite{CatinoMazzieri}. 
Moreover, the combination with a fundamental estimate on the injectivity radius \cite{CheegerGromovTaylor} enables us to show that nonnegatively Ricci curved manifolds  with asymptotically vanishing sectional curvature (\cref{def:AVSC}) and Euclidean volume growth, that is, $\mathrm{vol}(B_r(p)) \geq C r^n$ for some positive constant $C$ uniform in $p$, possess isoperimetric regions for any volume (see \cref{cor:AVR}). Such class of manifolds is quite rich, as it contains, for example, Perelman's examples of manifolds with non-unique asymptotic cones at infinity, see \cite{PerelmanExampleCones} and \cite[Section 8]{ChCo1}. Also, this class of manifolds naturally encompasses the case of manifolds with nonnegative Ricci curvature that are $C^{2, \alpha}$-asymptotically conical, for which the existence and the description of isoperimetric regions for large volumes were investigated in \cite{ChodoshEichmairVolkmann17}, see \cref{rem:AsymptoticallyEuclideanAndConical}. Since, by \cref{thm:MainRicNcGHIntro}, the Ricci curvature suffices to be nonnegatively defined just outside of a compact set,  compact perturbations of the above described metrics still enjoy existence of isoperimetric sets for any volume.

In analogy with \cite{MondinoNardulli16}, the main tool we are going to employ in addition to \cref{thm:MassDecompositionINTRO} to prove the above existence result in \cref{thm:MainRicNcGHIntro} is a comparison argument, introduced in \cite{MorganJohnson00}, following from the classical Bishop--Gromov monotonicity theorem recalled in \cref{thm:BishopGromov}. The coupling of a suitable asymptotic study of a minimizing sequence with a monotonicity formula,  aiming at excluding the drifting at infinity, seems to be a powerful and general strategy to infer the existence of isoperimetric sets on Riemannian manifolds. {Indeed, a similar idea is employed in the proof of the recent existence result for isoperimetric sets on asymptotically flat Riemannian manifolds with nonnegative scalar curvature, content of \cite[Proposition K.1]{Carlotto2016}}. In fact, such result mostly builds on a way easier asymptotic mass decomposition originated in \cite[Proposition 4.2]{EichmairMetzger} together with an isoperimetric inequality of Shi \cite{Shi} proved through the celebrated Hawking mass monotonicity along the Inverse Mean Curvature Flow \cite{HuiskenIlmanen}.

Apart from the already mentioned contributions, there are many other important results in literature about the existence and description of isoperimetric sets in Riemannian manifolds. Limiting ourselves to the contributions that inspired in some way our investigations, we recall \cite{MorganRitore02, RitRosales04} in which the authors studied the isoperimetric problem in abstract cones and in Euclidean cones respectively, \cite{Pedrosa2004}, where the isoperimetric problem is solved on cylinders, the isoperimetric existence theorem on Riemannian manifolds $(M^n, g)$ with compact quotient $M/\mathrm{Iso}(M^n)$, that has been pointed out by Morgan \cite[Chapter 3]{MorganBook}, building also on \cite{AlmgrenBook}, and the existence result for nonnegatively curved $2$-dimensional surfaces \cite{RitoreExistenceSurfaces01}.
For the existence and description of isoperimetric sets for large volumes, we mention the papers \cite{EichmairMetzger, EichmairMetzger2}, \cite{Chodosh2016}, \cite{ChodoshEichmairVolkmann17} and \cite{Arezzo} where an isoperimetric (for large volumes) foliation has been discovered on asymptotically Schwarzschildian, hyperbolic, conical, and cuspidal manifolds respectively.  
The isoperimetric problem has been and it is currently studied also in the sub-Riemannian { and sub-Finsler setting}: for example, the existence of isoperimetric sets of any volume has been established in Carnot groups \cite{LeonardiRigot}, in sub-Riemannian manifolds whose quotient by the group of contact transformations preserving the sub-Riemannian metric is compact \cite{GalliRitore}, { and in sub-Finsler nilpotent Lie groups \cite{Pozuelo}}. A different framework where this problem has been investigated is also that of $\mathbb{R}^n$ with  densities, see \cite{MorganPratelli, DePhilippis2017}.


\smallskip

We conclude this introduction by pointing out some other results and applications, part of which are technical and needed for proving \cref{thm:MassDecompositionINTRO} and \cref{thm:MainRicNcGHIntro}. Carrying out the asymtptotic analysis on Riemannian manifolds in the context of the Gromov--Hausdorff convergence allows us to derive useful comparison results between the isoperimetric profile of the manifold and the one of any pmGH limit along sequences of diverging points on the manifold. This leads to \cref{prop:ComparisonIsoperimetricProfile}, that essentially estimates from above the isoperimetric profile of a manifold $(M^n,g)$ with the one of any pmGH limit along sequences of points on $M^n$. 
\cref{prop:ComparisonIsoperimetricProfile} implies some interesting consequences on Cartan--Hadamard manifolds. We will prove that the isoperimetric profile of Cartan--Hadamard manifolds with Ricci bounded below and GH-asymptotic to $\R^n$ for $2\le n\le 4$ equals the one of the Euclidean space (\cref{thm:CartanHadamard}). Also, if in addition the sectional curvatures are strictly negative, the rigidity statement of \cref{thm:CartanHadamard} implies the nonexistence of isoperimetric regions, see \cref{ex:NonExistence}. In particular, this shows that a noncollaped manifold with a lower bound on the Ricci curvature may in general fail to enjoy existence of isoperimetric sets, even if the curvature is uniformly bounded.

\medskip

\textbf{Plan of the paper.} In \cref{sec:DefPreliminaryResults} we recall definitions, results and we prove a preliminary lemma (see \cref{lem:ProfileOnBoundedSets}) we will need. In \cref{sec:AsymptGeometry} we investigate the above mentioned relations between pmGH limits of manifolds and the isoperimetric profile of the manifold and the one of such pmGH limits. 
\cref{sec:MassDecomposition} is devoted to the analysis of the asymptotic behavior of the mass of minimizing sequences; here we prove \cref{thm:MassDecompositionINTRO} in its more detailed version, that is \cref{thm:MassDecomposition}, and apply it to deduce \cref{thm:MainRicNcGHIntro}. In \cref{sec:ApplicationsExamples} we discuss the applications and the examples anticipated above.

For the convenience of the reader, in \cref{sec:BishopGromov} we recall two useful well-known comparison results in Riemannian geometry, and in \cref{sec:BoundIsopRegions} we give a self-contained proof of the fact that suitable assumptions on a manifold $(M^n,g)$ imply that isoperimetric regions are bounded.

{
\medskip

\textbf{Addendum.} 
The previous \cref{thm:MassDecompositionINTRO} has been generalized to the setting of $\RCD(k,n)$ spaces endowed with the measure $\mathcal{H}^n$ in the subsequent paper \cite{AntonelliNardulliPozzetta}, where also a finite upper bound on $\overline{N}$ of \cref{thm:MassDecompositionINTRO} is provided. The latter nontrivial extension is based on the arguments developed in the present paper together with the analysis of the topological regularity of isoperimetric sets on $\RCD$ spaces carried out in \cite{AntonelliPasqualettoPozzetta21}.

}

\bigskip

\textbf{Acknowledgments.} The authors would like to thank Lorenzo Mazzieri for a useful conversation about \cref{ex:NonExistence} and Andrea Mondino for discussions related to \cref{sec:ApplicationsExamples}. They are also grateful to Elia Bru\`{e}, Tobias Colding, Nicola Gigli, Gian Paolo Leonardi, Stefano Nardulli, Vincenzo Scattaglia, and Daniele Semola for inspiring conversations about the subject.

G.A. was also partially supported by the European Research Council
(ERC Starting Grant 713998 GeoMeG `\emph{Geometry of Metric Groups}').

\section{Definitions and preliminary results}\label{sec:DefPreliminaryResults}


For the notions of BV and Sobolev spaces on Riemannian manifolds we refer the reader to \cite[Section 1]{MirandaPallaraParonettoPreunkert07}.
For every finite perimeter set $E$ in $\Omega$ we denote with $P(E,\Omega)$ the perimeter of $E$ inside $\Omega$. When $\Omega=M^n$ we simply write $P(E)$. We denote with $\mathcal{H}^{n-1}$ the $(n-1)$-dimensional Hausdorff measure on $M^n$ relative to the distance induced by $g$. We recall that for every finite perimeter set $E$ one has $P(E)=\mathcal{H}^{n-1}(\partial^*E)$ and the characteristic function $\chi_E$ belongs to $BV_{\rm loc}(M^n, \vol)$ with generalized gradient $D\chi_E = \nu \mathcal{H}^{n-1}\res \partial^* E$ for a function $\nu:M\to T M^n$ with $|\nu|=1$ at $|D\chi_E|$-a.e. point, where $\partial^* E$ is the essential boundary of $E$.

We recall the following terminology.

\begin{defn}[Convergence of finite perimeter sets]\label{def:ConvergenceFinitePerimeter}
Let $(M^n,g)$ be a Riemannian manifold. We say that a sequence of measurable (with respect to the volume measure) sets $E_i$ \emph{locally converges} to a measurable set $E$ if the characteristic functions $\chi_{E_i}$ converge to $\chi_E$ in $L^1_{\rm loc}(M^n,g)$. In such a case we simply write that $E_i\to E$ locally on $M^n$.

If the sets $E_i$ have also locally finite perimeter, that is, $P(E_i,\Omega)<+\infty$ for any $k$ and any bounded open set $\Omega$, we say that $E_i\to E$ \emph{in the sense of finite perimeter sets} if $E_i\to E$ locally on $M^n$ and the sequence of measures $D\chi_{E_i}$ locally weakly* converges as measures, that is, with respect to the duality with compactly supported continuous functions. In such a case, $E$ has locally finite perimeter and the weak* limit of $D\chi_{E_i}$ is $D\chi_E$.
\end{defn}

\begin{defn}[Isoperimetric profile]
Let $(M^n,g)$ be a Riemannian manifold. We define the isoperimetric profile function $I:[0,\vol(M^n))\to[0,+\infty)$ as follows
$$
I_{(M^n,g)}(V):=\inf\{P(\Omega):\text{$\Omega$ is a finite perimeter set in $M^n$ such that $\vol(\Omega)=V$}\}.
$$
We also occasionally write $I(V)$ when the ambient manifold $M^n$ is understood.
\end{defn}

\begin{remark}[Approximation of finite perimeter sets with smooth sets]\label{rem:Approximation}
It can be proved, see \cite[Lemma 2.3]{FloresNardulli20}, that when $M^n$ is a complete Riemannian manifold every finite perimeter set $\Omega$ with $0<\vol(\Omega)<+\infty$ and $\vol(\Omega^c)>0$ is approximated by relatively compact sets $\Omega_i$ in $M^n$ with smooth boundary such that $\vol(\Omega_i)=\vol(\Omega)$ for every $i\in\mathbb N$, $\vol(\Omega_i\Delta\Omega)\to 0$ when $i\to +\infty$, and $P(\Omega_i)\to P(\Omega)$ when $i\to +\infty$. Thus, by approximation, one can deduce that 
$$
I(V)=\inf\{\mathcal{H}^{n-1}(\partial\Omega):\text{$\Omega \Subset M^n$ has smooth boundary, $\vol(\Omega)=V$}\},
$$
see \cite[Theorem 1.1]{FloresNardulli20}.
\end{remark}

\begin{defn}[Isoperimetric region]\label{def:IsoperimetricRegion}
Given a Riemannian manifold $(M^n,g)$ the set $E$ is an {\em isoperimetric region} in $M^n$ if $0<\vol(E)<+\infty$ and for every finite perimeter set $\Omega\subset M^n$ such that $\vol(\Omega)=\vol(E)$ one has $P(E)\leq P(\Omega)$.
\end{defn}

The above definition of isoperimetry can of course be
rephrased in terms of the isoperimetric profile $I$ by saying that a subset $E \subset M^n$ of finite perimeter is isoperimetric for the volume $V$ if $\mathrm{vol}{(E)} = V$ and $I(V) = P(E)=\mathcal{H}^{n-1}{(\partial^* E)}$.

We also need to recall the definition of the simply connected radial models with constant sectional curvature.

\begin{defn}[Models of constant sectional curvature, cf. {\cite[Example 1.4.6]{Petersen2016}}]\label{def:Models}
Let us define
\[
\sn_k(r) := \begin{cases}
(-k)^{-\frac12} \sinh((-k)^{\frac12} r) & k<0,\\
r & k=0,\\
k^{-\frac12} \sin(k^{\frac12} r) & k>0.
\end{cases}
\]

If $k>0$, then $((0,\pi/\sqrt{k}]\times \mathbb S^{n-1},\d r^2+\mathrm{sn}_k^2(r)g_1)$, where $g_1$ is the canonical metric on $\mathbb S^{n-1}$, is the radial model of dimension $n$ and constant sectional curvature $k$. The metric can be smoothly extended at $r=0$, and thus we shall write that the the metric is defined on the ball $\mathbb B^n_{\pi/\sqrt{k}} \subset \R^n$. The Riemannian manifold $(\mathbb B^n_{\pi/\sqrt{k}}, g_k\eqdef \d r^2+\mathrm{sn}_k^2(r)g_1)$ is the unique (up to isometry) simply connected Riemannian manifold of dimension $n$ and constant sectional curvature $k>0$.

If instead $k\leq 0$, then $((0,+\infty)\times\mathbb S^{n-1},\d r^2+\mathrm{sn}_k^2(r)g_1)$ is the radial model of dimension $n$ and constant sectional curvature $k$. Extending the metric at $r=0$ analogously yields the unique (up to isometry) simply connected Riemannian manifold of dimension $n$ and constant sectional curvature $k\leq 0$, in this case denoted by $(\R^n,g_k)$.

We denote by $v(n,k,r)$ the volume of the ball of radius $r$ in the (unique) simply connected Riemannian manifold of sectional curvature $k$ of dimension $n$, and by $s(n, k, r)$ the volume of the boundary of such a ball. In particular $s(n,k,r)=n\omega_n\mathrm{sn}_k^{n-1}(r)$ and $v(n,k,r)=\int_0^rn\omega_n\mathrm{sn}_k^{n-1}(t)\de t$, where $\omega_n$ is the Euclidean volume of the Euclidean unit ball in $\mathbb R^n$.

Moreover, for given $n$, we denote by $\dist_k,\vol_k, P_k$ the geodesic distance, the volume measure, and the perimeter functional on the simply connected Riemannian manifold of sectional curvature $k$ (and dimension $n$), respectively.
\end{defn}

Let us also recall a classical definition for the convenience of the reader.

\begin{defn}[$\mathrm{AVR}$ and Euclidean volume growth]\label{def:AVR}
Let $(M^n,g)$ be a complete noncompact Riemannian manifold with $\ric\geq 0$. Thus, from Bishop--Gromov comparison in \cref{thm:BishopGromov} we know that the function $[0,+\infty)\ni r\to \frac{\vol(B_r(p))}{\omega_nr^n}$ is nonincreasing and goes to 1 as $r\to 0^+$. For any $p\in M^n$, we define
$$
\mathrm{AVR}(M^n,g):=\lim_{r\to+\infty}\frac{\vol(B_r(p))}{\omega_nr^n},
$$
the {\em asymptotic volume ratio} of $(M^n,g)$. The previous definition is independent of the choice of $p\in M^n$.
Notice that, by Bishop--Gromov comparison, we have $0\leq \mathrm{AVR}(M^n,g)\leq 1$, and $\vol(B_r(p))\geq \mathrm{AVR}(M^n,g) \omega_nr^n$ for every $r>0$, and every $p\in M^n$. If $\mathrm{AVR}(M^n,g)>0$ we say that $(M^n,g)$ has {\em Euclidean volume growth}.
\end{defn}

Let us now briefly recall the main concepts we will need from the theory of metric measure spaces. We recall that a {\em metric measure space, $\mathrm{m.m.s.}$ for short,} $(X,\dist_X,\mathfrak{m}_X)$ is a triple where $(X,\dist_X)$ is a locally compact separable metric space and $\mathfrak{m}_X$ is a Borel measure bounded on bounded sets. A {\em pointed metric measure space} is a quadruple $(X,\dist_X,\mathfrak{m}_X,x)$ where $(X,\dist_X,\mathfrak{m}_X)$ is a metric measure space and $x\in X$ is a point. 
\begin{center}
    For simplicity, and since it will always be our case, we will always assume that given $(X,\dist_X,\meas_X)$ a m.m.s.\! the support ${\rm spt}\,\meas_X$ of the measure $\meas_X$ is the whole $X$.
\end{center}
We assume the reader to be familiar with the notion of pointed measured Gromov--Hausdorff convergence, referring to \cite[Chapter 27]{VillaniBook} and to \cite[Chapter 7 and 8]{BuragoBuragoIvanovBook} for an overview on the subject. In the following treatment we introduce the pmGH-convergence already in a proper realization even if this is not the general definition. Nevertheless, the (simplified) definition of Gromov--Hausdorff convergence via a realization is equivalent to the standard definition of pmGH convergence in our setting, because in the applications we will always deal with locally uniformly doubling measures, see \cite[Theorem 3.15 and Section 3.5]{GigliMondinoSavare15}. The following definition is actually taken from the introductory exposition of \cite{AmborsioBrueSemola19}.

\begin{defn}[pGH and pmGH convergence]\label{def:GHconvergence}
A sequence $\{ (X_i, \dist_i, x_i) \}_{i\in \N}$ of pointed metric spaces is said to converge in the \emph{pointed Gromov--Hausdorff topology, in the $\mathrm{pGH}$ sense for short,} to a pointed metric space $ (Y, \dist_Y, y)$ if there exist a complete separable metric space $(Z, \dist_Z)$ and isometric embeddings
\[
\begin{split}
&\Psi_i:(X_i, \dist_i) \to (Z,\dist_Z), \qquad \forall\, i\in \N,\\
&\Psi:(Y, \dist_Y) \to (Z,\dist_Z),
\end{split}
\]
such that for any $\eps,R>0$ there is $i_0(\varepsilon,R)\in\mathbb N$ such that
\[
\Psi_i(B_R^{X_i}(x_i)) \subset \left[ \Psi(B_R^Y(y))\right]_\eps,
\qquad
\Psi(B_R^{Y}(y)) \subset \left[ \Psi_i(B_R^{X_i}(x_i))\right]_\eps,
\]
for any $i\ge i_0$, where $[A]_\eps\eqdef \{ z\in Z \st \dist_Z(z,A)\leq \eps\}$ for any $A \subset Z$.

Let $\meas_i$ and $\mu$ be given in such a way $(X_i,\dist_i,\meas_i,x_i)$ and $(Y,\dist_Y,\mu,y)$ are m.m.s.\! If in addition to the previous requirements we also have $(\Psi_i)_\sharp\mathfrak{m}_i \rightharpoonup \Psi_\sharp \mu$ with respect to duality with continuous bounded functions on $Z$ with bounded support, then the convergence is said to hold in the \emph{pointed measure Gromov--Hausdorff topology, or in the $\mathrm{pmGH}$ sense for short}.
\end{defn}

\subsection{$\RCD$ spaces}\label{sub:RCD}
Let us briefly introduce the so-called $\RCD$ condition for m.m.s. Since we will use part of the $\RCD$ theory just as an instrument for our purposes and since we will never use in the paper the specific definition of $\RCD$ space, we just outline the main references on the subject and we refer the interested reader to the survey of Ambrosio \cite{AmbrosioSurvey} and the references therein. 

After the introduction, in the independent works \cite{Sturm1,Sturm2} and \cite{LottVillani}, of the curvature dimension condition $\CD(k,n)$ encoding in a synthetic way the notion of Ricci curvature bounded from below by $k$ and dimension bounded above by $n$, the definition of $\RCD(k,n)$ m.m.s.\! was first proposed in \cite{GigliRCD} and then studied in \cite{Gigli13, ErbarKuwadaSturm15,AmbrosioMondinoSavare15}, see also \cite{CavallettiMilman16} for the equivalence between the $\RCD^*(k,n)$ and the $\RCD(k,n)$ condition. The infinite dimensional counterpart of this notion had been previously investigated in \cite{AmbrosioGigliSavare14}, see also \cite{AmbrosioGigliMondinoRajala15} for the case of $\sigma$-finite reference measures. 

\begin{remark}[pmGH limit of $\RCD$ spaces]\label{remark:stability}
	We recall that, whenever it exists, a pmGH limit of a sequence $\{(X_i,\dist_i,\meas_i,x_i)\}_{i\in\mathbb N}$ of (pointed) $\RCD(k,n)$ spaces is still an $\RCD(k,n)$ metric measure space.
\end{remark}

In particular, due to the compatibility of the $\RCD$ condition with the smooth case of Riemannian manifolds with Ricci curvature bounded from below and to its stability with respect to pointed measured Gromov--Hausdorff convergence, limits of smooth Riemannian manifolds with Ricci curvature uniformly bounded from below by $k$ and dimension uniformly bounded from above by $n$ are $\RCD(k,n)$ spaces. Then the class of $\RCD$ spaces includes the class of Ricci limit spaces, i.e., limits of sequences of Riemannian manifolds with the same dimension and with Ricci curvature uniformly bounded from below. The study of Ricci limits was initiated by Cheeger and Colding in the nineties in the series of papers \cite{ChCo0,ChCo1,ChCo2,ChCo3} and has seen remarkable developments in more recent years.
Since the above mentioned pioneering works, it was known that the regularity theory for Ricci limits improves adding to the lower curvature bound a uniform lower bound for the volume of unit balls along the converging sequence of Riemannian manifolds: this gives raise to the so-called notion of \textit{noncollapsed} Ricci limits. In particular, as a consequence of the volume convergence theorem proved in \cite{Colding97}, it is known that in the noncollapsed case the limit measure of the volume measures is the Hausdorff measure on the limit metric space, while this might not be the case for a general Ricci limit space.

Now we are ready to state the volume convergence theorems obtained by Gigli and De Philippis in \cite[Theorem 1.2 and Theorem 1.3]{DePhilippisGigli18}, which are the synthetic version of the celebrated volume convergence of Colding \cite{Colding97}. Whenever we write a metric measure space as a triple $(X,\dist,\mathcal{H}^n)$, it is understood that the measure $\mathcal{H}^n$ is the $n$-dimensional Hausdorff measure corresponding to the distance $\dist$ on $X$.

\begin{thm}\label{thm:volumeconvergence}
	Let $\{(X_i,\dist_i,\mathcal{H}^n,x_i)\}_{i\in\mathbb N}$ be a sequence of pointed $\RCD(k,n)$ m.m.s.\! with $k\in\mathbb{R}$ and $n\in [1,+\infty)$. Assume that $(X_i,\dist_i,x_i)$ converges in the pGH topology to $(X,\dist,x)$. Then precisely one of the following happens
	\begin{itemize}
		\item[(a)] $\limsup_{i\to\infty}\mathcal{H}^n\left(B_1(x_i)\right)>0$. Then the $\limsup$ is a limit and $(X_i,\dist_i,\mathcal{H}^n,x_i)$ converges in the pmGH topology to $(X,\dist,\mathcal{H}^n,x)$. Hence $(X,\dist,\mathcal{H}^n)$ is an $\RCD(k,n)$ m.m.s.;
		\item[(b)] $\lim_{i\to\infty}\mathcal{H}^n(B_1(x_i))=0$. In this case we have $\dim_{H}(X,\dist)\le n-1$, where $\dim_H(X,\dist)$ is the Hausdorff dimension of $(X,\dist)$. 
	\end{itemize}
	Moreover, for $k\in\mathbb R$ and $n\in[1,+\infty)$, let $\mathbb B_{k,n,R}$ be the collection of all equivalence classes up to isometry of closed balls of radius $R$ in $\RCD(k,n)$ spaces, equipped with the Gromov-Hausdorff distance. Then the map $\mathbb B_{k,n,R}\ni Z\to \mathcal{H}^n(Z)$ is real-valued and continuous.
\end{thm}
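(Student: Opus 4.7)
The plan is to extract a subsequential weak limit of the Hausdorff measures by combining Bishop--Gromov with Prokhorov, establish a dichotomy according to whether this limit is trivial, and in the non-trivial case identify the limit with $\mathcal{H}^n$ on the pGH limit.

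First I would exploit the Bishop--Gromov inequality valid on every $\RCD(k,n)$ space: for each $i$ the map $r \mapsto \mathcal{H}^n(B_r(x_i))/v(n,k,r)$ is non-increasing, and passing to $r \to 0^+$ it is bounded above by the $n$-density at $x_i$, which in an $\ncRCD(k,n)$ space is always at most $1$. This gives the uniform bound $\mathcal{H}^n(B_R(x_i)) \leq v(n,k,R)$ for all $R,i$. Transplanting to a common realization of the pGH convergence and applying Prokhorov with a diagonal extraction as $R\to\infty$, up to subsequence the measures $\mathcal{H}^n$ on $X_i$ converge weakly (in duality with compactly supported continuous functions) to some Radon measure $\mu$ on $X$. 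By stability of the $\RCD$ condition under pmGH convergence, either $\mu\equiv 0$ or $(X,\dist,\mu)$ is itself $\RCD(k,n)$.

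In case (a), namely $\limsup_i \mathcal{H}^n(B_1(x_i))>0$, any subsequential weak limit $\mu$ is non-trivial on $B_1(x)$, so $(X,\dist,\mu)$ is $\RCD(k,n)$. Moreover Bishop--Gromov provides an equibounded family of monotone ratios $r\mapsto \mathcal{H}^n(B_r(x_i))/v(n,k,r)$, whose pointwise convergence at continuity points of the limit combined with monotonicity upgrades the convergence to hold at $r=1$; this shows the $\limsup$ is actually a limit independent of the subsequence. The crucial step, and the one I expect to be the main obstacle, is the identification $\mu = \mathcal{H}^n_X$. For $\mu \leq \mathcal{H}^n_X$ I would use lower semicontinuity of $\mathcal{H}^n$ under pGH convergence of balls, proved via a $5r$-covering lemma together with the uniform doubling controlled by Bishop--Gromov. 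For the converse $\mu \geq \mathcal{H}^n_X$ I would invoke the rectifiable structure of $\RCD(k,n)$ spaces: the $n$-density of $\mu$ is equal to $1$ at $\mu$-a.e.\ (regular) point, whence $\mu \geq \mathcal{H}^n$ on the regular set; in the non-collapsed setting the singular set has Hausdorff dimension strictly smaller than $n$ and is therefore $\mathcal{H}^n$-negligible, closing the chain of inequalities.

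In case (b), $\mathcal{H}^n(B_1(x_i))\to 0$, Bishop--Gromov monotonicity immediately forces $\mathcal{H}^n(B_R(x_i))\to 0$ for every $R>0$. A covering argument combined with lower semicontinuity of $\mathcal{H}^n$ along pGH convergence then yields $\mathcal{H}^n(B_R(x))=0$ for every $R$, and hence $\dim_H(X,\dist)\leq n-1$. Finally, the \emph{moreover} statement is obtained by applying the dichotomy to any sequence of closed balls of radius $R$ in $\RCD(k,n)$ spaces converging in Gromov--Hausdorff distance: case (a) delivers outright convergence of the $\mathcal{H}^n$-measures through the identification $\mu=\mathcal{H}^n_X$, while in case (b) both the approximating and the limiting $n$-dimensional Hausdorff measures vanish, yielding continuity in either scenario.
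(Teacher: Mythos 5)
You should first note that the paper does not prove this statement at all: it is imported verbatim from De Philippis--Gigli (cited there as Theorem 1.2 and Theorem 1.3 of \cite{DePhilippisGigli18}), so there is no internal argument to match your sketch against; the only question is whether your outline would actually constitute a proof. Its skeleton (Bishop--Gromov upper bound $\mathcal{H}^n(B_r)\le v(n,k,r)$, Prokhorov extraction of a renormalized limit measure $\mu$, stability of $\RCD(k,n)$, dichotomy according to whether $\mu$ is trivial) does follow the architecture of the real proof, and you correctly identify the identification $\mu=\mathcal{H}^n_X$ as the crux. But exactly there the argument has genuine gaps. First, ``lower semicontinuity of $\mathcal{H}^n$ under pGH convergence'' with constant $1$ is not a standard fact and does not follow from a $5r$-covering lemma plus uniform doubling: such covering arguments compare $\mu$ and $\mathcal{H}^n$ only up to a dimensional constant (the classical density theorems give $\mu\le 2^n t\,\mathcal{H}^n$ from an upper density bound $t$), which is useless for the exact equality you need. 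Second, the converse inequality rests on ``the $n$-density of $\mu$ equals $1$ at $\mu$-a.e.\ point''; for the limit space you only know that $(X,\dist,\mu)$ is $\RCD(k,n)$ with $\mu(B_r(x))\le v(n,k,r)$, i.e.\ \emph{weakly} non-collapsed, and the statement that the density is then exactly $1$ a.e.\ (equivalently that a.e.\ tangent is $\R^n$ with the correct normalization of the measure) is precisely the substantive content of De Philippis--Gigli's theorem, proved via the ``volume cone to metric cone'' theorem and the structure theory of $\RCD$ spaces. Invoking it, or the codimension bound on the singular set of the limit (which presupposes the limit is $\ncRCD$), makes the argument circular.

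Case (b) suffers from the same unproved ingredient: to pass from $\mathcal{H}^n(B_1(x_i))\to 0$ to $\mathcal{H}^n(X)=0$ you again need the sharp lower semicontinuity of $\mathcal{H}^n$ along pGH limits, and the packing bounds you would extract from Bishop--Gromov degenerate exactly when the volumes collapse (the lower bound on the volume of small balls is itself proportional to the collapsing quantity). The correct route in the literature goes through the essential dimension and rectifiable structure of the limit $\RCD(k,n)$ space rather than through a covering estimate. In short: your proposal is a faithful road map of the known proof, but the two steps you would need to fill in, namely the sharp two-sided comparison between the weak limit measure and $\mathcal{H}^n_X$, are the theorem itself, not routine lemmas; as written the proof is incomplete, and for the purposes of this paper the statement should simply be cited from \cite{DePhilippisGigli18} as the authors do.
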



\begin{remark}[Gromov precompactness theorem for $\RCD$ spaces]\label{rem:GromovPrecompactness}
Here we recall the synthetic variant of Gromov's precompactness theorem for $\RCD$ spaces, see \cite[Equation (2.1)]{DePhilippisGigli18}. Let $\{(X_i,\dist_i,\meas_i,x_i)\}_{i\in\mathbb N}$ be a sequence of $\RCD(k_i,n)$ spaces with $n\in[1,+\infty)$, $\supp(\meas_i)=X_i$ for every $i\in\mathbb N$, $\meas_i(B_1(x_i))\in[v,v^{-1}]$ for some $v\in(0,1)$ and for every $i\in\mathbb N$, and $k_i\to k\in\mathbb R$. Then there exists a subsequence pmGH-converging to some $\RCD(k,n)$ space $(X,\dist,\meas,x)$ with $\supp(\meas)=X$.
\end{remark}



We conclude this part by recalling a few basic definitions and results concerning the perimeter functional in the setting of metric measure spaces (see \cite{Ambrosio02,Miranda03,AmbrosioDiMarino14}).

\begin{defn}[$BV$ functions and perimeter on m.m.s.]\label{def:BVperimetro}
Let $(X,\dist,\meas)$ be a metric measure space. A function $f \in L^1(X,\meas)$ is said to belong to the space of \emph{bounded variation functions} $BV(X,\dist,\meas)$ if there is a sequence $f_i \in {\rm Lip}_{\mathrm{loc}}(X)$ such that $f_i \to f$ in $L^1(X,\meas)$ and $\limsup_i \int_X \lip f_i \de \meas < +\infty$, where $\lip u (x) \eqdef \limsup_{y\to x} \frac{|u(y)-u(x)|}{\dist(x,y)}$ is the \emph{slope} of $u$ at $x$, for any accumulation point $x\in X$, and $\lip u(x):=0$ if $x\in X$ is isolated. In such a case we define
\[
|Df|(A) \eqdef \inf\left\{\liminf_i \int_A \lip f_i \de\meas \st \text{$f_i \in {\rm Lip}_{\rm loc}(A), f_i \to f $ in $L^1(A,\meas)$} \right\},
\]
for any open set $A\subset X$.

If $E\subset X$ is a Borel set and $A\subset X$ is open, we  define the \emph{perimeter $P(E,A)$  of $E$ in $A$} by
\[
P(E,A) \eqdef \inf\left\{\liminf_i \int_A \lip u_i \de\meas \st \text{$u_i \in {\rm Lip}_{\rm loc}(A), u_i \to \chi_E $ in $L^1_{\rm loc}(A,\meas)$} \right\},
\]
We say that $E$ has \emph{finite perimeter} if $P(E,X)<+\infty$, and we denote by $P(E)\eqdef P(E,X)$. Let us remark that the set functions $|Df|, P(E,\cdot)$ above are restrictions to open sets of Borel measures that we denote by $|Df|, |D\chi_E|$ respectively, see \cite{AmbrosioDiMarino14}, and \cite{Miranda03}. 

The \emph{isoperimetric profile of $(X,\dist,\meas)$} is then
\[
I_{X}(V)\eqdef \left\{ P(E) \st \text{$E\subset X$ Borel, $\meas(E)=V$} \right\},
\]
for any $V\in[0,\meas(X))$. If $E\subset X$ is Borel with $\meas(E)=V$ and $P(E)=I_X(V)$, then we say that $E$ is an \emph{isoperimetric region}.
\end{defn}

It follows from classical approximation results (cf. \cref{rem:Approximation}) that the above definition yields the usual notion of perimeter on any Riemannian manifold $(M^n,g)$ recalled at the beginning of this section.

\begin{remark}[Coarea formula on metric measure spaces]\label{rem:PerimeterMMS}
Let $(X,\dist,\meas)$ be a metric measure space. Let us observe that from the definitions given above, a Borel set $E$ with finite measure has finite perimeter if and only if the characteristic function $\chi_E$ belongs to $BV(X,\dist,\meas)$.

If $f \in BV(X,\dist,\meas)$, then $\{f>\alpha\}$ has finite perimeter for a.e. $\alpha \in \R$ and the \emph{coarea formula} holds
\begin{equation*}
    \int_X u \de |Df| = \int_{-\infty}^{+\infty} \left( \int_X u \de \abs{D\chi_{\{f>\alpha\}}} \right) \de \alpha,
\end{equation*}
for any Borel function $u:X\to [0,+\infty]$, see \cite[Proposition 4.2]{Miranda03}. If $f$ is also continuous and nonnegative, then $|Df|(\{f=\alpha\})=0$ for \emph{every} $\alpha \in [0,+\infty)$ and the \emph{localized coarea formula} holds
\begin{equation*}
    \int_{\{a<f<b\}} u \de |Df| = \int_a^b \left( \int_X u \de \abs{D\chi_{\{f>\alpha\}}} \right) \de \alpha,
\end{equation*}
for every Borel function $u:X\to [0,+\infty]$ and every $0\le a < b < +\infty$, see \cite[Corollary 1.9]{AmborsioBrueSemola19}.

Applying the above coarea formulas to the distance function $r(y)=\dist(y,x)$ from a fixed point $y\in X$, one deduces that balls $B_r(y)$ have finite perimeter for almost every radius $r>0$, the function $r\mapsto \meas(B_r(y))$ is continuous, $\meas(\partial B_r(y)) =0$ for every $r>0$, and $\tfrac{d}{dr}\meas(B_r(y)) = P(B_r(y))$ for a.e. $r>0$.
\end{remark}

\begin{remark}[Bishop--Gromov comparison theorem on m.m.s.]\label{rem:PerimeterMMS2}
Let us recall that for an arbitrary $\CD((n-1)k,n)$ space $(X,\dist,\meas)$ the classical Bishop--Gromov volume comparison (cf. \cref{thm:BishopGromov}) still holds. More precisely, for a fixed $x\in X$, the function $\meas(B_r(x))/v(n,k,r)$ is nonincreasing in $r$ and the function $P(B_r(x))/s(n,k,r)$ is essentially nonincreasing in $r$, i.e., $P(B_R(x))/s(n,k,R) \le P(B_r(x))/s(n,k,r)$ for almost every radii $R\ge r$, see \cite[Theorem 18.8, Equation (18.8), Proof of Theorem 30.11]{VillaniBook}. Moreover, it holds that $P(B_r(x))/s(n, k, r)\leq \vol(B_r(x))/v(n,k,r)$ for any $r>0$, indeed the last inequality follows from the monotonicity of the volume and perimeter ratios together with the coarea formula on balls.

Moreover, if $(X,\dist,\mathcal{H}^n)$ is an $\RCD((n-1)k,n)$ space, one can conclude that $\mathcal{H}^n$-almost every point has a unique measure Gromov--Hausdorff tangent isometric to $\mathbb R^n$ (\cite[Theorem 1.12]{DePhilippisGigli18}), and thus, from the volume convergence in \cref{thm:volumeconvergence}, we get
 \begin{equation}\label{eqn:VolumeConv}
 \lim_{r\to 0}\frac{\mathcal{H}^n(B_r(x))}{v(n,k,r)}=\lim_{r\to 0}\frac{\mathcal{H}^n(B_r(x))}{\omega_nr^n}=1, \qquad \text{for $\mathcal{H}^n$-almost every $x$},
 \end{equation}
 where $\omega_n$ is the volume of the unit ball in $\mathbb R^n$. Moreover, since the density function $x\mapsto \lim_{r\to 0} \tfrac{\mathcal{H}^n(B_r(x))}{\omega_nr^n}$ is lower semicontinuous (\cite[Lemma 2.2]{DePhilippisGigli18}), the latter \eqref{eqn:VolumeConv} implies that the density is bounded above by the constant $1$. Hence, from the monotonicity at the beginning of the remark we deduce that, if $(X,\dist,\mathcal{H}^n)$ is an $\RCD((n-1)k,n)$ space, then for every $x\in X$ we have $\mathcal{H}^n(B_r(x))\leq v(n,k,r)$ for every $r>0$. In particular, if $(X,\dist,\mathcal{H}^n)$ is an $\RCD((n-1)k,n)$ space, then for every $x\in X$ we have $P(B_r(x))\leq s(n,k,r)$ for every $r>0$.
\end{remark}

\begin{remark}[Representation of the perimeter on $\RCD$ spaces]\label{rem:PerimeterMMS3}
Let us fix $(X,\dist,\meas)$ an $\RCD((n-1)k,n)$ space. Hence, from Bishop--Gromov comparison in \cref{rem:PerimeterMMS2}, for any fixed $x\in X$, 
$$
\limsup_{r\to0}\meas(B_{2r}(x))/\meas(B_r(x)) \le \limsup_{r\to0} v(n,k,2r)/v(n,k,r)<+\infty,
$$
i.e., $\meas$ is \emph{asymptotically doubling}, and therefore the Lebesgue Differentiation Theorem holds true, see \cite[Theorem 3.4.3]{HeinonenKoskelaShanmugalingam} and \cite[Lebesgue Differentiation Theorem, p. 77]{HeinonenKoskelaShanmugalingam}.
So it makes sense to identify any Borel set $E$ with the set $E^1$ of points of density $1$, where, in general, 
\[
E^t\eqdef \left\{x\in X \st \lim_{r\searrow0} \frac{\meas(E\cap B_r(x))}{\meas(B_r(x))}=t \right\},
\]
for any $t\in[0,1]$. The \emph{essential boundary} of $E$ is then classically defined by $\partial^* E \eqdef X\setminus(E^0 \cup E^1)$.

As in the case of Riemannian manifolds, if $(X,\dist,\mathcal{H}^n)$ is  an $\RCD ((n-1)k,n)$ space endowed with the $n$-dimensional Hausdorff measure, the perimeter measure can be represented by
\begin{equation}\label{eq:PerimeterRepresentation}
|D\chi_E| = \mathcal{H}^{n-1}\res \partial^* E,
\end{equation}
for any finite perimeter set $E$. In fact, this follows by putting together the representation given in \cite[Theorem 5.3]{Ambrosio02} and the recent one contained in \cite[Corollary 4.2]{BruePasqualettoSemola}.

It easily follows from such a representation formula that if $E\subset X$ has finite perimeter and $x\in X$, then for a.e. radius $r>0$ the intersection $B_r(x)\cap E$ has finite perimeter and
\begin{equation}\label{eq:PerimeterIntersection}
|D \chi_{B_r(x)\cap E}| = \mathcal{H}^{n-1}\res (\partial^* E \cap B_r(x) ) + \mathcal{H}^{n-1}\res(E \cap \partial^* B_r(x)).
\end{equation}
Indeed for a.e. $r>0$ the ball $B_r(x)$ has finite perimeter and $|D\chi_E|(\partial B_r(x))=0$; so \eqref{eq:PerimeterIntersection} follows from \eqref{eq:PerimeterRepresentation} by noticing that for such an $r$ it holds that $\partial^*(B_r(x) \cap E) = (\partial^* E \cap B_r(x) ) \cup (E \cap \partial^* B_r(x))$ up to $\mathcal{H}^{n-1}$-negligible sets.

We mention that finer regularity properties of sets of finite perimeter have been recently proved in \cite{BruePasqualettoSemolaConstancy21}.
\end{remark}

\subsection{Sets of finite perimeter and GH-convergence}\label{sub:FiniteGH}

We need to recall a generalized $L^1$-notion of convergence for sets defined on a sequence of metric measure spaces converging in the pmGH sense. Such a definition is given in \cite[Definition 3.1]{AmborsioBrueSemola19}, and it is investigated in \cite{AmborsioBrueSemola19} capitalizing on the results in \cite{AmbrosioHonda17}.

\begin{defn}[$L^1$-strong and $L^1_{\mathrm{loc}}$ convergence]\label{def:L1strong}
Let $\{ (X_i, \dist_i, \mathfrak{m}_i, x_i) \}_{i\in \N}$  be a sequence of pointed metric measure spaces converging in the pmGH sense to a pointed metric measure space $ (Y, \dist_Y, \mu, y)$ and let $(Z,\dist_Z)$ be a realization as in \cref{def:GHconvergence}.

We say that a sequence of Borel sets $E_i\subset X_i$ such that $\mathfrak{m}_i(E_i) < +\infty$ for any $i \in \N$ converges \emph{in the $L^1$-strong sense} to a Borel set $F\subset Y$ with $\mu(F) < +\infty$ if $\mathfrak{m}_i(E_i) \to \mu(F)$ and $\chi_{E_i}\mathfrak{m}_i \rightharpoonup \chi_F\mu$ with respect to the duality with continuous bounded functions with bounded support on $Z$.

We say that a sequence of Borel sets $E_i\subset X_i$ converges \emph{in the $L^1_{\mathrm{loc}}$-sense} to a Borel set $F\subset Y$ if $E_i\cap B_R(x_i)$ converges to $F\cap B_R(y)$ in $L^1$-strong for every $R>0$.
\end{defn}

Observe that in the above definition it makes sense to speak about the convergence $\chi_{E_i}\mathfrak{m}_i \rightharpoonup \chi_F\mu$ with respect to the duality with continuous bounded functions with bounded support on $Z$ as $(X_i,\dist_i),(Y,\dist_Y)$ can be assumed to be topological subspaces of $(Z,\dist_Z)$ by means of the isometries $\Psi_i,\Psi$ of \cref{def:GHconvergence}, and the measures $\mathfrak{m}_i,\mu$ can be then identified with the push-forwards $(\Psi_i)_\sharp\mathfrak{m}_i,\Psi_\sharp\mu$ respectively.
 
The following result is taken from \cite{AmborsioBrueSemola19} and will be of crucial importance in the proof of \cref{thm:MassDecomposition}.
\begin{prop}[{\cite[Proposition 3.3, Corollary 3.4, Proposition 3.6, Proposition 3.8]{AmborsioBrueSemola19}}]\label{prop:SemicontinuitaAmbrosioBrueSemola}
Let $k\in\mathbb R$, $n\geq 1$, and $\{(X_i,\dist_i,\meas_i,x_i)\}_{i\in\mathbb N}$ be a sequence of $\RCD(k,n)$ m.m.s.\! converging in the pmGH sense to $(Y,\dist_Y,\mu,y)$. Then, 
\begin{itemize}
    \item[(a)] For any $r>0$ and for any sequence of finite perimeter sets $E_i\subset \overline B_{r}(x_i)$  satisfying
    $$
    \sup_{i\in\mathbb N}|D\chi_{E_i}|(X_i)<+\infty,
    $$
    there exists a subsequence $i_k$ and a finite perimeter set $F\subset \overline{B}_r(y)$ such that $E_{i_k}\to F$ in $L^1$-strong as $k\to+\infty$. Moreover 
    $$
    |D\chi_F|(Y)\leq \liminf_{k\to+\infty}|D\chi_{E_{i_k}}|(X_{i_k}).
    $$
    
    \item[(b)] For any sequence of Borel sets $E_i\subset X_i$ with 
    $$
    \sup_{i\in\mathbb N}|D\chi_{E_i}|(B_R(x_i))<+\infty, \qquad \forall\,R>0,
    $$
    there exists a subsequence $i_k$ and a Borel set $F\subset Y$ such that $E_{i_k}\to F$ in $L^1_{\mathrm{loc}}$.
    
    \item[(c)] Let $F\subset Y$ be a bounded set of finite perimeter. Then there exist a subsequence $i_k$, and uniformly bounded finite perimeter sets $E_{i_k}\subset X_{i_k}$ such that $E_{i_k}\to F$ in $L^1$-strong and $|D\chi_{E_{i_k}}|(X_{i_k})\to |D\chi_F|(Y)$ as $k\to+\infty$.
\end{itemize}
\end{prop}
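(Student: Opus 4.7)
The plan is to exploit the $BV$/perimeter theory on $\RCD(k,n)$ spaces, which rests on their uniform local doubling property (see \cref{rem:PerimeterMMS2}) and on a uniform local $(1,1)$-Poincar\'e inequality, combined with the Mosco-type stability of Cheeger and total variation energies along pmGH-converging sequences. After passing to a common realization $(Z,\dist_Z)$ via the isometric embeddings $\Psi_i,\Psi$ of \cref{def:GHconvergence}, I identify each $X_i$ and $Y$ with their images in $Z$ and the measures $\meas_i,\mu$ with their push-forwards, which converge weakly in duality with continuous bounded functions with bounded support.

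For part (a), the Lipschitz approximations provided by \cref{def:BVperimetro} have uniformly bounded slope integrals. Since $\RCD(k,n)$ spaces with a uniform lower bound on the measure of unit balls enjoy a uniform local $(1,1)$-Poincar\'e inequality, a Rellich--Kondrachov-type argument yields precompactness in $L^1$ of the approximating functions viewed on $Z$. A subsequential weak limit is $[0,1]$-valued, and because the $E_i$ are equi-bounded in $Z$ and the reference measures converge weakly, the limit must be the characteristic function of some Borel $F\subset \overline{B}_r(y)$; the mass convergence $\meas_i(E_i)\to\mu(F)$ then upgrades the convergence to $L^1$-strong. For the liminf inequality, take nearly optimal Lipschitz approximations of $\chi_F$ on $Y$, extend them to $Z$ by McShane and restrict to $X_{i_k}$, exploiting the lower semicontinuity of slopes along pGH converging sequences. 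Part (b) then follows by a standard truncation argument: using the coarea formula for the distance function (\cref{rem:PerimeterMMS}), for a.e.\ $R>0$ the sets $E_i\cap \overline{B}_R(x_i)$ have finite perimeter uniformly controlled on $B_R(x_i)$, and a diagonal extraction over $R\nearrow+\infty$ produces a Borel limit $F\subset Y$ with $L^1_{\mathrm{loc}}$-convergence of a subsequence.

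For part (c), I would build a recovery sequence. Approximate $\chi_F$ by functions $v_j\in \mathrm{Lip}_{\mathrm{loc}}(Y)\cap L^1(\mu)$ with $v_j\to\chi_F$ in $L^1(\mu)$ and $\int_Y \lip v_j\,\de\mu\to|D\chi_F|(Y)$; extend each $v_j$ to $Z$ via McShane and restrict to $X_i$, obtaining $v_{j,i}\in\mathrm{Lip}_{\mathrm{loc}}(X_i)$. By the coarea formula, almost every superlevel set $\{v_{j,i}>t\}$ has finite perimeter, and a careful diagonal choice of $t=t(i,j)$ produces uniformly bounded sets $E_{i_k}$ satisfying $E_{i_k}\to F$ in $L^1$-strong and $\limsup_k |D\chi_{E_{i_k}}|(X_{i_k})\leq|D\chi_F|(Y)$; combined with the liminf inequality from (a), equality of the perimeters follows.

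The main obstacle is technical: one needs both a Rellich--Kondrachov-type compactness and a recovery scheme that are uniform across the sequence $\{X_i\}$. Both ultimately rest on the uniform local doubling and Poincar\'e inequalities available on $\RCD(k,n)$ spaces, together with the Mosco convergence of $BV$ energies along pmGH converging sequences of such spaces. Ensuring that approximating objects built on $Y$ can be coherently transferred to each $X_i$ (via the common ambient $Z$) without losing quantitative control on slopes is the delicate point of the argument.
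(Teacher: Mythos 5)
The first thing to note is that the paper does not prove this proposition at all: it is imported verbatim from Ambrosio--Bru\`e--Semola (Propositions 3.3, 3.6, 3.8 and Corollary 3.4 there), whose proofs in turn rest on the Ambrosio--Honda machinery of Mosco convergence of Cheeger/total-variation energies along pmGH-convergent sequences. So there is no in-paper argument to compare against; what can be assessed is whether your sketch is a viable reconstruction of the cited proof. At the level of the toolbox (uniform local doubling and Poincar\'e on $\RCD(k,n)$ spaces, a common realization $(Z,\dist_Z)$, compactness plus a recovery-sequence construction) it is, but two steps as written would not go through.

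First, the lower semicontinuity $|D\chi_F|(Y)\le\liminf_k|D\chi_{E_{i_k}}|(X_{i_k})$ cannot be obtained by taking nearly optimal Lipschitz approximations of $\chi_F$ on $Y$, extending by McShane and restricting to $X_{i_k}$: that construction produces competitors on $X_{i_k}$ whose slope integrals are close to $|D\chi_F|(Y)$, which bounds nothing from below and is in fact the mechanism for the \emph{limsup} (recovery) inequality of part (c). The liminf inequality requires the joint lower semicontinuity of the total variation under $L^1$-strong convergence of functions defined on \emph{varying} spaces, i.e.\ the $\Gamma$-liminf half of the Mosco convergence of the $BV$ energies; this is precisely the nontrivial content of the cited Proposition 3.6 and cannot be replaced by a transfer of test functions in the direction you propose. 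Second, in part (a) you pass from ``a subsequential weak limit is $[0,1]$-valued'' to ``the limit must be a characteristic function'': a weak $L^1$ limit of characteristic functions need not be $\{0,1\}$-valued, and this conclusion requires the \emph{strong} $L^1$ compactness (hence a.e.\ convergence along a subsequence) that the uniform doubling--Poincar\'e/Riesz--Kolmogorov argument is meant to deliver; moreover the mass convergence $\meas_{i_k}(E_{i_k})\to\mu(F)$, which is part of the definition of $L^1$-strong convergence, is itself something to be proven (using that the sets sit in a fixed ball whose boundary is $\mu$-negligible for a.e.\ radius), not a consequence one can invoke to ``upgrade'' the convergence. With these two points repaired --- essentially by quoting or reproving the Ambrosio--Honda stability results --- the remaining parts (the diagonal truncation for (b), the coarea selection of superlevel sets for (c)) are standard and correctly outlined.
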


With the help of the previous result we can now prove the following lemma which will be used in the forthcoming section.
\begin{lemma}\label{lem:ProfileOnBoundedSets}
Let $(X,\dist,\mathcal{H}^n)$ be an $\RCD ((n-1)k,n)$ space with $\mathcal{H}^n(X)=+\infty$. If, for some $v_0>0$,  $\mathcal{H}^n(B_1(x))\ge v_0$ for any $x\in X$, then the isoperimetric profile $I_X$ of $X$ can be rewritten as
\begin{equation}\label{eq:ProfileBoundedSets}
  I_X(V) = \inf \left\{P(E) \st \text{$E\subset X$ Borel, $\mathcal{H}^n(E)=V$, $E$ bounded} \right\}
\qquad
\forall\, V\in (0,+\infty).  
\end{equation}
\end{lemma}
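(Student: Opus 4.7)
The inequality $I_X(V) \leq \inf\{P(E) \st \mathcal{H}^n(E)=V,\, E \text{ bounded}\}$ is immediate from \cref{def:BVperimetro}. For the reverse, given any Borel $E\subset X$ with $\mathcal{H}^n(E)=V$ and $P(E)<+\infty$, the plan is to produce a sequence of bounded Borel sets $\widetilde{E}_j$ with $\mathcal{H}^n(\widetilde{E}_j)=V$ and $\limsup_j P(\widetilde{E}_j)\leq P(E)$. The construction splits into two steps: truncate $E$ by intersection with a large ball, then restore the volume defect by adjoining a small ball placed far away.

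Fix $o\in X$ and set $E_r\eqdef E\cap B_r(o)$. By \eqref{eq:PerimeterIntersection} in \cref{rem:PerimeterMMS3}, for a.e.\! $r>0$
\begin{equation*}
P(E_r)=\mathcal{H}^{n-1}(\partial^* E\cap B_r(o))+\mathcal{H}^{n-1}(E\cap \partial^* B_r(o)),
\end{equation*}
and the first summand increases to $P(E)$ as $r\to+\infty$. The second summand is controlled via the coarea formula from \cref{rem:PerimeterMMS}: applied to the $1$-Lipschitz function $\dist(\cdot,o)$ (localized on balls $B_T(o)$, since it is not globally in $BV$), whose slope is everywhere $\leq 1$ so that its total variation measure is dominated by $\mathcal{H}^n$, it yields
\begin{equation*}
\int_0^{+\infty}\mathcal{H}^{n-1}(E\cap \partial^* B_\alpha(o))\de\alpha\leq \mathcal{H}^n(E)=V<+\infty.
\end{equation*}
Hence a sequence $r_j\to+\infty$ can be chosen with $\mathcal{H}^{n-1}(E\cap \partial^* B_{r_j}(o))\to 0$; along it, $P(E_{r_j})\to P(E)$ and $\delta_j\eqdef V-\mathcal{H}^n(E_{r_j})\to 0$.

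For the second step, by \cref{rem:PerimeterMMS2}, for $\mathcal{H}^n$-a.e.\! $x\in X$ the sharp estimate $P(B_\rho(x))\leq s(n,k,\rho)$ holds for every $\rho>0$. Note that $\mathcal{H}^n(X)=+\infty$ combined with Bishop--Gromov forces $k\leq 0$ and $\mathcal{H}^n(B_{r_j+2}(o))<+\infty$, so a point $x_j\in X\setminus B_{r_j+2}(o)$ satisfying the above perimeter bound exists. The map $\rho\mapsto \mathcal{H}^n(B_\rho(x_j))$ is continuous (\cref{rem:PerimeterMMS}), vanishes at $\rho=0$, and reaches at least $v_0$ at $\rho=1$ by the noncollapsing hypothesis; so for $j$ large enough the intermediate value theorem yields $\rho_j\in(0,1)$ with $\mathcal{H}^n(B_{\rho_j}(x_j))=\delta_j$. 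The lower Bishop--Gromov bound $\mathcal{H}^n(B_\rho(x_j))\geq v_0\,v(n,k,\rho)/v(n,k,1)$ for $\rho\le 1$ forces $\rho_j\to 0$, and hence $s(n,k,\rho_j)\to 0$.

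Finally, setting $\widetilde{E}_j\eqdef E_{r_j}\cup B_{\rho_j}(x_j)$ gives a disjoint union (because $B_{\rho_j}(x_j)\subset B_1(x_j)\subset X\setminus B_{r_j+1}(o)\supset E_{r_j}$), thus a bounded Borel set of volume exactly $V$ and perimeter at most $P(E_{r_j})+s(n,k,\rho_j)\to P(E)$. Taking the infimum over $E$ yields \eqref{eq:ProfileBoundedSets}. The main subtle point I expect is the coarea estimate bounding $\mathcal{H}^{n-1}(E\cap \partial^* B_r(o))$ in $L^1(\de r)$: in the metric measure setting $\dist(\cdot,o)$ is only locally $BV$, and one has to combine the localized coarea formula on $B_T(o)$ with the slope bound $\lip \dist(\cdot,o)\leq 1$ and let $T\to+\infty$, rather than applying the global coarea directly.
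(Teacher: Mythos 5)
Your proposal is correct and follows essentially the same route as the paper: truncate by balls whose radii are chosen via the coarea formula applied to the distance function so that $\mathcal{H}^{n-1}(E\cap\partial^* B_{r_j}(o))\to 0$, then restore the volume with a far-away small ball whose radius is forced to zero by the noncollapsing hypothesis and whose perimeter is controlled by the Bishop--Gromov comparison of \cref{rem:PerimeterMMS2}. (The paper runs the construction along a minimizing sequence rather than for a single arbitrary $E$, but this is only a cosmetic reorganization; note also the typo in your disjointness chain, where you want $E_{r_j}\subset B_{r_j+1}(o)$ rather than $E_{r_j}\subset X\setminus B_{r_j+1}(o)$.)
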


\begin{proof}
Let us observe first that if $E\subset X$ is a finite perimeter set with finite measure $\mathcal{H}^n(E)<+\infty$, then for any point $o\in X$ there exists a sequence of radii $R_i\to+\infty$ such that
\begin{itemize}
    \item $\mathcal{H}^n(E \cap B_{R_i}(o)) \ge \mathcal{H}^n(E) - 1/i $ for any $i$;
    \item $P(E \cap B_{R_i}(o)) = P(E, B_{R_i}(o)) + \mathcal{H}^{n-1}(E \cap \partial^* B_{R_i}(o) ) $ for any $i$;
    \item $\mathcal{H}^{n-1}(E \cap \partial^* B_{R_i}(o) ) \le 1/i$ for any $i$.
\end{itemize}
Indeed, by the results in \cref{rem:PerimeterMMS}, and \cref{rem:PerimeterMMS3}, we know that
\[
\mathcal{H}^n (E \cap B_r(o)) = \int_0^{r} \mathcal{H}^{n-1}(E \cap \partial^* B_t(o) ) \de t \xrightarrow[r\to+\infty]{} \mathcal{H}^{n}(E) <+\infty.
\]
Recalling also \eqref{eq:PerimeterIntersection} in order to justify the second item above, the sought claim follows.

Now let $V\in (0,+\infty)$ and consider $E_j\subset X$ with $\mathcal{H}^n(E_j)=V$ such that $P_X(E_j) \le I_X(V) + 1/j$. Fix $o\in X$ and let $R_i^j$ be given by the first part of the proof applied to $E_j$. For any $i,j$ let $B_{\rho_{i,j}}(p_{i,j}) \Subset X \setminus B_{R_i^j}(o)$ be such that
\[
\mathcal{H}^n( B_{\rho_{i,j}}(p_{i,j}) ) = V - \mathcal{H}^n(E_j \cap B_{R_i^j}(o)) \le \frac1i \, ,
\]
and moreover $p_{i,j}$ is chosen such that the comparison inequalities discussed in \cref{rem:PerimeterMMS2} hold.
Such balls exist since $\mathcal{H}^n(X)=+\infty$. 
Since balls of radius $1$ have volume $\ge v_0$, we can also assume that $\rho_{i,j}<1$. Then the volume comparison (see \cref{rem:PerimeterMMS2}) implies $\mathcal{H}^n(B_{\rho_{i,j}}(p_{i,j}))\ge v(n,k,\rho_{i,j}) v_0/v(n,k,1)$. Hence $\lim_i \rho_{i,j} = 0$ for any $j$. We then get that, by using the perimeter comparison (see \cref{rem:PerimeterMMS2})
\[
\lim_i P ( B_{\rho_{i,j}}(p_{i,j}) ) \le \lim_i s(n,k,\rho_{i,j} ) = 0,
\]
for any $j$. Hence
\[
 P( [ E_j \cap B_{R_i^j}(o) ] \cup B_{\rho_{i,j}}(p_{i,j})) \le P(E_j) + \frac1i +  s(n,k,\rho_{i,j} )  \le I_X(V) + \frac1j + \frac1i +  s(n,k,\rho_{i,j} ) .
\]
Taking $i_j\ge j$ sufficiently large for any fixed $j$ so that $s(n,k,\rho_{i_j,j} ) \le 1/j$ yields that
\[
P_X( [ E_j \cap B_{R_{i_j}^j}(o) ] \cup B_{\rho_{i_j,j}}(p_{i_j,j})) \le  I_X(V) + \frac3j. 
\]
Hence $[ E_j \cap B_{R_{i_j}^j}(o) ] \cup B_{\rho_{i_j,j}}(p_{i_j,j})$ is a minimizing (for the perimeter) sequence of bounded sets of volume $V$. So this implies \eqref{eq:ProfileBoundedSets}.
\end{proof}

\section{Asymptotic geometry and isoperimetric profile}\label{sec:AsymptGeometry}

In this section we prove some inequalities regarding the isoperimetric profile in some special classes of Riemannian manifold. We first need the following useful result, which is proved without uniform assumptions on the volume of unit balls.

\begin{lemma}\label{lem:ProfileUSC}
Let $(X,\dist,\mathcal{H}^n)$ be an $\RCD ((n-1)k,n)$ space. Then its isoperimetric profile $I_X:(0,\mathcal{H}^n(X))\to [0,+\infty)$ is upper semicontinuous.
\end{lemma}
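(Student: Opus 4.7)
The plan is to prove upper semicontinuity at an arbitrary $V\in(0,\mathcal{H}^n(X))$ by the standard construction that adds or removes a vanishingly small ball. Fix $\varepsilon>0$ and pick a Borel set $E\subset X$ with $\mathcal{H}^n(E)=V$ and $P(E)\le I_X(V)+\varepsilon$. By \cref{rem:PerimeterMMS2}, $\mathcal{H}^n$-a.e.\ point $p\in X$ satisfies $P(B_r(p))\le s(n,k,r)$ for every $r>0$; combining this with the Lebesgue Differentiation Theorem (recalled in \cref{rem:PerimeterMMS3}), and using $\mathcal{H}^n(E^1)=V>0$ and $\mathcal{H}^n(X\setminus E^1)=\mathcal{H}^n(X)-V>0$, I select points $p^+\in X\setminus E^1$ and $p^-\in E^1$ for which this perimeter comparison holds.

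Given a sequence $V_i\to V$, I split it into the two monotone subsequences and handle them separately. For $V_i\downarrow V^+$, consider
\[
f(r)\eqdef\mathcal{H}^n(E\cup B_r(p^+))-V=\mathcal{H}^n(B_r(p^+)\setminus E).
\]
By the continuity of $r\mapsto \mathcal{H}^n(B_r(p^+))$ (\cref{rem:PerimeterMMS}) and the fact that $p^+\in E^0$ forces $f(r)>0$ for every sufficiently small $r>0$, the function $f$ is continuous, nondecreasing, with $f(0)=0$ and $\lim_{r\to+\infty}f(r)=\mathcal{H}^n(X)-V>0$. Define $r_i\eqdef\inf\{r\ge 0\st f(r)\ge V_i-V\}$; then $f(r_i)=V_i-V$ and $r_i\to 0^+$. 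Setting $F_i\eqdef E\cup B_{r_i}(p^+)$, one has $\mathcal{H}^n(F_i)=V_i$ and, by the subadditivity of the perimeter,
\[
P(F_i)\le P(E)+P(B_{r_i}(p^+))\le I_X(V)+\varepsilon+s(n,k,r_i).
\]
The case $V_i\uparrow V^-$ is symmetric: applying the analogous argument to $g(r)\eqdef\mathcal{H}^n(E\cap B_r(p^-))$ (which is continuous, nondecreasing, positive for every $r>0$ since $p^-\in E^1$) and choosing $r_i$ so that $g(r_i)=V-V_i$, the set $F_i\eqdef E\setminus B_{r_i}(p^-)$ satisfies $\mathcal{H}^n(F_i)=V_i$ and $P(F_i)\le P(E)+s(n,k,r_i)$, with $r_i\to 0^+$.

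In either case $I_X(V_i)\le P(F_i)\to I_X(V)+\varepsilon$, whence $\limsup_i I_X(V_i)\le I_X(V)+\varepsilon$, and letting $\varepsilon\searrow 0$ concludes the proof. The only delicate points are to confirm that $f$ and $g$ are genuinely continuous (which needs $\mathcal{H}^n(\partial B_r(p))=0$ from \cref{rem:PerimeterMMS}), that the selected points exist with the sharp ball-perimeter comparison $P(B_r(p))\le s(n,k,r)$ for \emph{every} $r>0$ (supplied by \cref{rem:PerimeterMMS2}), and that $r_i\to 0$; the latter follows because $f(\eta)>0$ for each $\eta>0$ together with the monotonicity of $f$ forces $r_i\le\eta$ as soon as $V_i-V<f(\eta)$, and similarly for $g$.
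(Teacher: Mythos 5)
Your proof is correct and follows essentially the same strategy as the paper's: take a near-optimal competitor $E$ for volume $V$ and adjust its volume by attaching or deleting a small ball centered at a point of density $0$ (resp.\ $1$) chosen among the $\mathcal{H}^n$-a.e.\ points where the sharp comparison $P(B_r(\cdot))\le s(n,k,r)$ of \cref{rem:PerimeterMMS2} holds, using the continuity of $r\mapsto\mathcal{H}^n(B_r\cap E)$ and subadditivity of the perimeter. The only (harmless) difference is that you treat $v>V$ and $v<V$ separately with a one-parameter intermediate-value argument, whereas the paper performs both modifications simultaneously via a two-parameter continuity argument.
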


\begin{proof}
Fix $V\in (0,\mathcal{H}^n(X))$ and let $\eta>0$. Then take $E\subset X$ Borel such that $\mathcal{H}^n(E)=V$ and $P_X(E) \le I_X(V) + \eta$. Since the measure $\mathcal{H}^n$ is asymptotically doubling, see  \cref{rem:PerimeterMMS3}, we can identify $E$ with the set of density one points $E^1$, and we denote $E^0$ the set of density zero points of $E$. Let us fix $x \in E^1=E$ and $y \in E^0$ such that the comparison inequalities discussed in \cref{rem:PerimeterMMS2} hold. There is $\overline{\rho}$ such that
\[
\begin{split}
  \mathcal{H}^n(E \cap B_\rho(x) ) > \frac34 \mathcal{H}^n(B_\rho(x)) & \qquad \forall\,\rho \in (0,\overline{\rho}),\\
  \mathcal{H}^n(E \cap B_\rho(y) ) < \frac14 \mathcal{H}^n(B_\rho(y)) & \qquad \forall\,\rho \in (0,\overline{\rho}),
\end{split}
\]
and $\dist(x,y)>3\overline{\rho}$.
We claim that there is $\delta \in (0,V/2)$ and $\omega:(V-\delta,V+\delta)\to \R$ such that for any $v \in (V-\delta,V+\delta)$ there is $\rho_x=\rho_x(v),\rho_y=\rho_y(v) \in [0,\overline{\rho})$ such that
\begin{equation}\label{eq:ClaimUSC}
    \begin{split}
        \mathcal{H}^n((E \cup B_{\rho_y}(y) )\setminus B_{\rho_x}(x) ) &= v, \\
        P_X((E \cup B_{\rho_y}(y) )\setminus B_{\rho_x}(x)) &\le P_X(E) + \omega(v),\\
        \lim_{v\to V} \omega(v) &= 0.
    \end{split}
\end{equation}

We observe that such a claim implies the statement of the Lemma. Indeed  if $v_j\to V$ is any sequence, then the claim yields a sequence of sets $E_j \eqdef (E \cup B_{\rho_{y,j}}(y) )\setminus B_{\rho_{x,j}}(x)$ with $\mathcal{H}^n(E_j) = v_j$, and that satisfy
\[
I_X(v_j) \le P_X(E_j) \le P_X(E) + \omega(v_j) \le I_X(V) + \eta + \omega(v_j).
\]
Passing to the $\limsup$ as $j\to +\infty$ in the above inequality, since $v_j\to V$ and $V$ are arbitrary, and then letting $\eta\to 0$, readily implies that $I_X$ is upper semicontinuous.

So we are left to prove the claim. Take
\[
0<\delta < \min \left\{ \mathcal{H}^n(B_{\overline{\rho}}(y)\setminus E ) , \mathcal{H}^n(B_{\overline{\rho}}(x) \cap E ), V/2 \right\}.
\]
Observe that the function
\begin{equation}\label{eq:ContinuityFunctionVolume}
    \begin{split}
        [0,\overline{\rho})^2 \ni (\rho_1,\rho_2) \mapsto \, &\mathcal{H}^n ( (E \cup B_{\rho_2}(y) )\setminus B_{\rho_1}(x) ) \\
        &= \mathcal{H}^n ( E \cup B_{\rho_2}(y) ) - \mathcal{H}^n( E\cap B_{\rho_1}(x) ) \\
        &= \mathcal{H}^n(E) - \mathcal{H}^n(E \cap B_{\rho_2}(y)) + \mathcal{H}^n (B_{\rho_2}(y) ) - \mathcal{H}^n( E\cap B_{\rho_1}(x) ),
    \end{split}
\end{equation}
is continuous; indeed by the coarea formula (\cref{rem:PerimeterMMS}) we know that 
\[
\mathcal{H}^n(E \cap B_{\rho}(z)) = \int_0^\rho \int_X \chi_E \de |D\chi_{B_t(z)}| \de t,
\]
for any $\rho>0$ and $z \in X$.

We are ready to prove \eqref{eq:ClaimUSC}. Let $v \in (V-\delta,V+\delta)$; we need to define $\omega(v)$, $\rho_x(v)$, and $\rho_y(v)$. If $v=V$, then $\omega(V)=\rho_x(V)=\rho_y(V)=0$ works. So we assume $v> V$, the case $v<V$ being completely analogous. By the choice of $\delta$ there is $\rho_v\in(0,\overline{\rho})$ such that
\begin{equation*}
    \mathcal{H}^n(E \cup B_{\rho_v}(y)) = v, 
    \qquad
    \mathcal{H}^n(E \cup B_{\rho}(y)) > v \quad
    \forall\, \rho \in (\rho_v,\overline{\rho}).
\end{equation*}
By continuity of the map in \eqref{eq:ContinuityFunctionVolume} there is $\widetilde{\rho}_v \in (\rho_v,\overline{\rho})$ such that
\begin{equation*}
\forall\, \rho \in (\rho_v, \widetilde{\rho}_v) \,\, \exists\, \sigma \in (0,\overline{\rho}) \st \mathcal{H}^n ( (E\cup B_\rho(y))\setminus B_\sigma(x)) = v.
\end{equation*}
Hence there exist $\rho_x \in (\rho_v,\widetilde{\rho}_v)$ and $\rho_y \in(0,\overline{\rho})$ such that
\begin{equation}\label{eq:PerClaim}
\mathcal{H}^n ( (E\cup B_{\rho_y}(y))\setminus B_{\rho_x}(x)) = v,
\end{equation}
and in addition $P_X(B_{\rho_y}(y)) \le s(n,k,\rho_y)$, $P_X(B_{\rho_x}(x)) \le s(n,k,\rho_x)$ (see the comparison of the perimeter in \cref{rem:PerimeterMMS2}). Therefore
\begin{equation}\label{eq:PerClaim2}
\begin{split}
    P_X( (E\cup B_{\rho_y}(y))\setminus B_{\rho_x}(x)) &\le P_X(E) + P_X(B_{\rho_y}(y)) + P_X(B_{\rho_x}(x)) \\
    &\le P_X(E) +  s(n,k,\rho_y) + s(n,k,\rho_x).
\end{split}
\end{equation}
Moreover, we can clearly choose $\rho_x,\rho_y \to 0$ if $v\to V^+$. Hence defining $\omega(v) \eqdef  s(n,k,\rho_y) + s(n,k,\rho_x)$, \eqref{eq:PerClaim} and \eqref{eq:PerClaim2} imply the claimed \eqref{eq:ClaimUSC}.
\end{proof}

We now prove a proposition that roughly says that the isoperimetric profile of a manifold is less or equal than the isoperimetric profile of every pmGH limit at infinity.
The following proposition has to be read as a generalization of \cite[Lemma 2.7]{Nar14}.

\begin{prop}\label{prop:ComparisonIsoperimetricProfile} 
Let $(M^n,g)$ be a complete noncompact noncollapsed Riemannian manifold such that $\ric\geq (n-1)k$ for some $k\in(-\infty,0]$. Let $p_i\in M^n$ be a diverging sequence of points on $M^n$. Then, up to subsequence, there exists $(X_{\infty},\dist_{\infty},\mathcal{H}^n,p_{\infty})$ a pointed Ricci limit space, and thus an $\RCD(k,n)$ space,
such that
\begin{equation}\label{eq:ConvergenceIsop}
(M^n,\dist,\vol,p_i)\xrightarrow[i\to+\infty]{pmGH}(X_{\infty},\dist_{\infty},\mathcal{H}^n,p_{\infty}).
\end{equation}

Moreover, whenever a diverging sequence of points $p_i\in M^n$ and a pointed Ricci limit space $(X_{\infty},\dist_{\infty},\mathcal{H}^n,p_{\infty})$ satisfy \eqref{eq:ConvergenceIsop}, then
\begin{equation}\label{eq:GeneralInequalityIsopProfile}
    I_{(M^n,g)}(V) \le I_{(M^n,g)}(V_1) + I_{X_\infty}(V_2) 
    \qquad
    \forall\, V= V_1+ V_2,
\end{equation}
with $V,V_1,V_2\ge 0$.

In particular
\begin{equation}\label{eq:InequalityIsopProfile}
 I_{(M^n,g)}(V)\leq I_{X_{\infty}}(V) \qquad \forall\, V>0,
\end{equation}
and if, for any $j\ge 1$, $\{p_{i,j} \,|\, i \in \N\}$ is a diverging sequence of points on $M^n$ such that $(M^n,\dist,\vol,p_{i,j})\to (X_j,\dist_j,\meas_j,p_j)$ in the pmGH sense as $i\to+\infty$, then
\begin{equation}\label{eq:InfinityInequalityIsopProfile}
    I_{(M^n,g)}(V) \le I_{(M^n,g)}(V_0) + \sum_{j=1}^{+\infty} I_{X_j}(V_j),
\end{equation}
whenever $V= \sum_{j=0}^{+\infty} V_j$ with $V,V_j\ge0$ for any $j$.
\end{prop}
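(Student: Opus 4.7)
The proposition splits into four assertions: existence of the $\ncRCD((n-1)k,n)$ limit along a subsequence of $p_i$; the two-term inequality \eqref{eq:GeneralInequalityIsopProfile}; its specialization \eqref{eq:InequalityIsopProfile}; and the countable analogue \eqref{eq:InfinityInequalityIsopProfile}. The existence of the limit is a direct application of synthetic Gromov precompactness (\cref{rem:GromovPrecompactness}) and the volume convergence theorem (\cref{thm:volumeconvergence}): each $(M^n,\dist,\vol,p_i)$ is a pointed $\RCD((n-1)k,n)$ space, and noncollapsedness together with Bishop--Gromov pin $\vol(B_1(p_i))$ inside a compact interval $[v_0,v(n,k,1)]$. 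A pmGH-converging subsequence is thus extracted with $\RCD((n-1)k,n)$ limit, and the uniform lower volume bound forces case $(a)$ of \cref{thm:volumeconvergence}, so the limit measure equals $\mathcal H^n$ and the limit is $\ncRCD((n-1)k,n)$, hence a noncollapsed Ricci limit by construction (\cref{rem:gapncRCD}).

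For \eqref{eq:GeneralInequalityIsopProfile}, fix $\eta>0$ and take a bounded $E\subset M^n$ with $\vol(E)=V_1$ and $P(E)\le I_{(M^n,g)}(V_1)+\eta$ (\cref{rem:Approximation}), together with $F\subset X_\infty$ satisfying $\mathcal H^n(F)=V_2$ and $P_{X_\infty}(F)\le I_{X_\infty}(V_2)+\eta$. The set $F$ can be truncated to a bounded subset by intersecting with a well-chosen ball $B_R(p_\infty)$, using the coarea argument from \cref{lem:ProfileOnBoundedSets} together with \eqref{eq:PerimeterIntersection}, at the cost of decreasing the mass by at most $\eta$ and increasing the perimeter by at most $\eta$. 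Now \cref{prop:SemicontinuitaAmbrosioBrueSemola}(c), applied in the pmGH convergence $(M^n,\dist,\vol,p_i)\to(X_\infty,\dist_\infty,\mathcal H^n,p_\infty)$, produces uniformly bounded $F_i\subset M^n$ sitting near $p_i$ with $F_i\to F$ in $L^1$-strong and $P(F_i)\to P_{X_\infty}(F)$. Since $E$ is bounded and the basepoints diverge, $E\cap F_i=\emptyset$ for $i$ large, so $\widetilde E_i\eqdef E\cup F_i$ satisfies $\vol(\widetilde E_i)\to V':= V_1+\mathcal H^n(F)$ with $0\le V-V'\le\eta$ and $P(\widetilde E_i)\to P(E)+P_{X_\infty}(F)$. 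I then perform a small-ball adjustment: choose $q_i\in M^n$ far from $\widetilde E_i$ and $\rho_i\to 0$ with $\vol(\widetilde E_i\cup B_{\rho_i}(q_i))=V$, the perimeter cost $P(B_{\rho_i}(q_i))\to 0$ being controlled by Bishop--Gromov comparison (\cref{thm:BishopGromov}). Letting $i\to+\infty$ and then $\eta\to 0$ yields \eqref{eq:GeneralInequalityIsopProfile}.

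The specialization \eqref{eq:InequalityIsopProfile} follows immediately by taking $V_1=0$ in \eqref{eq:GeneralInequalityIsopProfile}, since $I_{(M^n,g)}(0)=0$. For \eqref{eq:InfinityInequalityIsopProfile}, induction on $N$ based on \eqref{eq:GeneralInequalityIsopProfile} gives the finite version $I_{(M^n,g)}(V)\le I_{(M^n,g)}(V_0)+\sum_{j=1}^{N}I_{X_j}(V_j)$ whenever $V=V_0+V_1+\cdots+V_N$: the inductive step applies \eqref{eq:GeneralInequalityIsopProfile} to the splitting $V=(V_0+\cdots+V_{N-1})+V_N$ using the limit $X_N$, and then invokes the hypothesis on the first summand. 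To pass to a countable sum, set $\widetilde V_0^{\,N}\eqdef V_0+\sum_{j>N}V_j$, which converges to $V_0$ as $N\to\infty$ because $\sum_j V_j<+\infty$; the finite-$N$ inequality reads $I_{(M^n,g)}(V)\le I_{(M^n,g)}(\widetilde V_0^{\,N})+\sum_{j=1}^{N}I_{X_j}(V_j)$, and upper semicontinuity of the isoperimetric profile (\cref{lem:ProfileUSC}) upgrades this to \eqref{eq:InfinityInequalityIsopProfile} upon sending $N\to\infty$.

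The main technical hurdle is the simultaneous placement and volume-adjustment step in \eqref{eq:GeneralInequalityIsopProfile}: one must transplant a bounded near-optimal $F\subset X_\infty$ to sets $F_i\subset M^n$ that are at once disjoint from the fixed $E$ and close to $F$ both in $L^1$-strong and in perimeter, and then repair the small residual volume gap without spending more than an infinitesimal amount of perimeter. The fact that the approximants from \cref{prop:SemicontinuitaAmbrosioBrueSemola}(c) lie in uniformly bounded neighborhoods of $p_i$, together with the divergence of the basepoints and the Bishop--Gromov control on the perimeter of small geodesic balls, is precisely what reconciles these constraints with the fixed bounded piece $E$.
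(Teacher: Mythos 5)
Your proposal is correct and follows essentially the same route as the paper's proof: precompactness plus volume convergence for the limit space, transplantation of a bounded near-optimal competitor from $X_\infty$ via item (c) of \cref{prop:SemicontinuitaAmbrosioBrueSemola}, a small-ball repair of the volume deficit, disjointness from the fixed bounded piece $E$ because the basepoints diverge, and then iteration plus upper semicontinuity of the profile of $M^n$ for the countable version. The one genuine (small) divergence is in how the bounded competitor in $X_\infty$ is produced: the paper invokes \cref{lem:ProfileOnBoundedSets} at the shifted volumes $V_2-1/j$ and therefore needs the upper semicontinuity of $I_{X_\infty}$ (\cref{lem:ProfileUSC}) to close the estimate, whereas you truncate a near-optimal set of volume exactly $V_2$ and absorb the mass loss into $\eta$, which removes the need for \cref{lem:ProfileUSC} in this step (you still need it, applied to $M^n$ itself, for \eqref{eq:InfinityInequalityIsopProfile}). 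One imprecision to fix: for fixed $\eta$ the volume deficit $V-\vol(\widetilde E_i)$ converges to $V-V'\in[0,\eta]$, not to $0$, so the repair radii $\rho_i$ do \emph{not} tend to $0$ in $i$ and $P(B_{\rho_i}(q_i))$ does not vanish as $i\to+\infty$; rather, noncollapsedness and Bishop--Gromov give $\rho_i\le\rho(\eta)$ and $P(B_{\rho_i}(q_i))\le s(n,k,\rho(\eta))=:\omega(\eta)$ uniformly in large $i$, with $\omega(\eta)\to0$ as $\eta\to0$, which is exactly what your final double limit ($i\to+\infty$, then $\eta\to0$) requires.
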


\begin{proof}
First, we observe that since $M^n$ is noncompact and noncollapsed, it has infinite volume; indeed, there exist countably many disjoint balls of radius $1$ contained in $M^n$. The convergence in \eqref{eq:ConvergenceIsop} is just a consequence of Gromov Precompactness Theorem. So we are left to prove \eqref{eq:GeneralInequalityIsopProfile}.

Without loss of generality let $V>0$ be fixed, and let $V_1,V_2\ge0$ with $V_1+V_2=V$. So we can assume $V_2>0$ without loss of generality. Consider $V^j:=V_2-1/j>0$ for $j$ large enough. Let $\Omega\subset M^n$ be a bounded set such that $\vol(\Omega)=V_1$ and $P(\Omega)\le I_{(M^n,g)}(V_1) + \eta$ for a fixed $\eta>0$.

By the fact that $M^n$ is noncollapsed and by \cref{thm:volumeconvergence} we know that for some $v_0>0$ we have $\mathcal{H}^n(B_1(x))\ge v_0$ for any $x \in X_\infty$: indeed, for every $x\in X_{\infty}$ there exists a sequence $\widetilde p_i$ such that $(M^n,\dist,\widetilde p_i)\to (X_{\infty},\dist_\infty,x)$ in the pGH sense, and then we can apply the second part of \cref{thm:volumeconvergence}, together with the fact that $M^n$ is noncollapsed to deduce the sought bound. As $X_\infty$ is noncompact, it also follows that $\mathcal{H}^n(X_\infty)=+\infty$.

Then by \eqref{eq:ProfileBoundedSets} there exist bounded sets $E_j\subset X_\infty$ with $\mathcal{H}^n(E_j) = V^j$ and $P_{X_\infty}(E_j) \le I_{X_\infty}(V^j) + 1/j$. By item (c) in \cref{prop:SemicontinuitaAmbrosioBrueSemola}, up to subsequences in $i$, for any $j$ there are $R_j>0$ and a sequence $F_i^j \subset B_{R_j}(p_i)\subset M^n$ such that $F_i^j\to E_j$ in $L^1$-strong as $i\to+\infty$ and $\lim_i P ( F_i^j) = P_{X_\infty}( E_j) $.

Therefore, if $o\in M^n$ is fixed, there is a ball $B_S(o)$ such that $\Omega\Subset B_S(o)$, and, since $p_i$ diverges at infinity, there are balls $B_{\rho_{i,j}}(o')\subset M^n$ for some $o'\in M^n$ such that
\begin{equation*}
    B_{\rho_{i,j}}(o') \Subset M^n \setminus (B_{R_j}(p_i) \cup B_S(o) ) ,
    \qquad
    \vol( B_{\rho_{i,j}}(o') ) = V_2 - \vol (F^j_i),
\end{equation*}
for any $i,j$, up to subsequences.
For any $j$ there is $i_j$ such that $F^j_{i_j}\Subset M\setminus B_S(o)$, $P(F_{i_j}^j) \le  P_{X_\infty}( E_j) + 1/j $, and $\vol(F_{i_j}^j)\ge V_2- 2/j$. Moreover, since $\lim_j\vol(B_{\rho_{i_j,j}}(o'))=0$, then $\lim_j P(B_{\rho_{i_j,j}}(o'))=0$. Hence, since $F_{i_j}^j$, $B_{\rho_{i_j,j}}(o')$ and $\Omega$ are mutually disjoint, we have, also by exploiting the previous inequalities,
\[
\begin{split}
I_{(M^n,g)}(V) &\le P(F_{i_j}^{j} \cup  B_{\rho_{i_j,j}}(o') \cup \Omega )  
= P(F_{i_j}^j ) + P (B_{\rho_{i_j,j}}(o') ) + P(\Omega) \\
&\le P_{X_\infty}( E_j) + \frac1j + P(B_{\rho_{i_j,j}}(o')) + I_{(M^n,g)}(V_1) + \eta\\
&\le I_{X_\infty}\left(V_2-\frac1j \right) + \frac2j+ P(B_{\rho_{i_j,j}}(o'))  + I_{(M^n,g)}(V_1) + \eta.
\end{split}
\]
Passing to the $\limsup$ in the previous estimate and using that $I_{X_\infty}$ is upper semicontinuous by \cref{lem:ProfileUSC} jointly with the fact that $\eta$ is arbitrary, finally implies \eqref{eq:GeneralInequalityIsopProfile}.

Now \eqref{eq:InequalityIsopProfile} clearly follows from \eqref{eq:GeneralInequalityIsopProfile} with $V_1=0$. Finally, in the notation and assumptions of \eqref{eq:InfinityInequalityIsopProfile}, we can iteratively apply \eqref{eq:GeneralInequalityIsopProfile} to get
\[
\begin{split}
    I_{(M^n,g)}(V)
    &\le I_{X_1}(V_1) + I_{(M^n,g)}\left( V_0 + \sum_{j=2}^{+\infty} V_j\right) 
    \le I_{(M^n,g)}\left( V_0 + \sum_{j=k}^{+\infty} V_j\right) + \sum_{j=1}^{k-1} I_{X_j}(V_j),
\end{split}
\]
for any $k \geq 2$. Letting $k\to+\infty$, since $\left( V_0 + \sum_{j=k}^{+\infty} V_j\right) \to V_0$, passing to the limsup in the above estimate and using \cref{lem:ProfileUSC} imply \eqref{eq:InfinityInequalityIsopProfile}.
\end{proof}

\begin{remark}[On the hypotheses in \cref{prop:ComparisonIsoperimetricProfile}]\label{rem:HypothesesProp4.2}
We remark that with the same proof of \cref{prop:ComparisonIsoperimetricProfile} we can prove a more general statement substituting $M^n$ with an arbitrary $\RCD(k,n)$ space $(X,\dist,\mathcal{H}^n)$ that satisfies $\mathcal{H}^n(B_1(x))>v_0$ for every $x\in X$ and for some $v_0>0$. 

\end{remark}

We recall that a manifold $(M^n,g)$ is Cartan--Hadamard if it is complete, $\sect\leq 0$ and $M^n$ is simply connected. Recall that if $(M^n,g)$ is Cartan--Hadamard, then $M^n$ is diffeomorphic to $\R^n$.

\begin{cor}\label{thm:CartanHadamard}
Let $(M^n,g)$ be a Cartan--Hadamard manifold of dimension $2\leq n\leq 4$ such that there exists $k\in(-\infty, 0)$ for which $\ric\geq (n-1)k$ on $M^n$, and such that there exists a diverging sequence $p_j\in M^n$ for which $(M^n,\dist,p_j)\to (\mathbb R^n,\dist_{\mathrm{eu}},0)$ in the pGH sense as $j\to+\infty$. Then
\[
I_{(M^n,g)} (V) = I_{(\mathbb R^n,g_{\mathrm{eu}})}(V),
\]
for any $V\ge0$.
Moreover, if there exists an isoperimetric region $\Omega$, then $(\Omega,g)$ is isometric to a Euclidean ball of the same volume.
\end{cor}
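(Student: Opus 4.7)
The plan is to sandwich the profile $I_{(M^n,g)}$ between $I_{\mathbb{R}^n}$ (from above, via the GH hypothesis) and $I_{\mathbb{R}^n}$ (from below, via the Cartan--Hadamard conjecture), and then exploit the resulting equality case for rigidity.

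First I would verify that $(M^n,g)$ meets the standing assumptions of \cref{prop:ComparisonIsoperimetricProfile}. Since $(M^n,g)$ is Cartan--Hadamard, $\sect \le 0$, so by the Rauch/G\"unther comparison theorem the exponential map is a global diffeomorphism and $\vol(B_1(p)) \ge \omega_n$ for every $p\in M^n$; hence $M^n$ is noncompact, has infinite volume, and is noncollapsed. Because $\ric \ge (n-1)k$, \cref{rem:GromovPrecompactness} together with \cref{thm:volumeconvergence}(a) applied to the sequence $p_j$ upgrades the assumed pGH convergence to a pmGH convergence
\[
(M^n,\dist,\vol,p_j)\xrightarrow{pmGH}(\mathbb{R}^n,\dist_{\mathrm{eu}},\mathcal{H}^n,0),
\]
where the limit measure is Hausdorff $n$-measure by the $\ncRCD$ property of noncollapsed Ricci limits, and $\mathcal{H}^n=\mathcal{L}^n$ on $\mathbb{R}^n$. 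Consequently \eqref{eq:InequalityIsopProfile} yields the upper bound $I_{(M^n,g)}(V)\le I_{(\mathbb{R}^n,g_{\mathrm{eu}})}(V)$ for every $V\ge 0$.

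For the matching lower bound I would invoke the \emph{Cartan--Hadamard conjecture}: on any simply connected manifold with $\sect\le 0$ of dimension $n$, every finite perimeter set $E$ satisfies $P(E)\ge n\omega_n^{1/n}\vol(E)^{(n-1)/n}=I_{(\mathbb{R}^n,g_{\mathrm{eu}})}(\vol(E))$. The conjecture is known for $n=2$ (Weil), $n=3$ (Kleiner), and $n=4$ (Croke); this is precisely what dictates the restriction $2\le n\le 4$. Taking the infimum over competitors of volume $V$ gives $I_{(M^n,g)}(V)\ge I_{(\mathbb{R}^n,g_{\mathrm{eu}})}(V)$, and combined with the previous paragraph produces the claimed equality.

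For the rigidity assertion, suppose $\Omega\subset M^n$ is an isoperimetric region of volume $V$. Then $P(\Omega)=I_{(M^n,g)}(V)=I_{(\mathbb{R}^n,g_{\mathrm{eu}})}(V)$, so $\Omega$ realizes equality in the Cartan--Hadamard--type Euclidean isoperimetric inequality on $M^n$. In each of the dimensions $n=2,3,4$ the known proofs of the conjecture come with a rigidity statement forcing the equality set to be isometric to a Euclidean ball of the same volume, which is exactly the desired conclusion. The main conceptual obstacle is thus external to the paper: it lies in the Cartan--Hadamard conjecture itself together with its equality case, whose availability only in low dimensions is the reason for the hypothesis $n\le 4$; once that is taken as input, the argument reduces to the upper bound coming from \cref{prop:ComparisonIsoperimetricProfile} plus the volume/measure identification of the pmGH limit.
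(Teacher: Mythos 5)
Your argument is correct and follows essentially the same route as the paper: noncollapsedness (you via G\"unther comparison, the paper via Croke's inequality, both valid since $\inj\equiv+\infty$ on a Cartan--Hadamard manifold), upgrading pGH to pmGH by volume convergence, the upper bound from \cref{prop:ComparisonIsoperimetricProfile}, the lower bound from the known low-dimensional cases of the Cartan--Hadamard conjecture, and rigidity from the equality cases of those inequalities. The one step you gloss over is that before invoking the rigidity statements of Weil, Kleiner, and Croke one should note that an isoperimetric region in dimension $2\le n\le 4$ has smooth boundary (the paper cites \cite[Proposition 2.4]{RitRosales04}), since those equality cases are formulated for sufficiently regular domains rather than arbitrary finite perimeter sets.
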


\begin{proof}
We recall the following result, which is due to Croke, see \cite[Proposition 14]{CrokeInequality80}.
Let $(M^n,g)$ be a complete Riemannian manifold. Then there exists $C = C(n)>0$ such that 
$$
\vol(B_r(p))\geq Cr^n, \qquad \text{for all $p\in M^n$, and for all $0<r<\inj(p)/2$.}
$$
Since $M^n$ is Cartan--Hadamard, for every $p\in M^n$ we have $\inj(p)=+\infty$. Hence we deduce that $M^n$ is noncollapsed. Thus, as a consequence of the volume convergence in \cref{thm:volumeconvergence}, we get that the pGH limit in the statement is actually a pmGH limit. Hence, from \cref{prop:ComparisonIsoperimetricProfile} we directly get that $I_{(M^n,g)}\leq I_{(\mathbb R^n,g_{\mathrm{eu}})}$. In case $2\leq n\leq 4$ a sharp isoperimetric inequality, i.e., with the Euclidean constant, is available for Cartan--Hadamard manifold, see \cite[p. 1]{WeilIsoperimetricaSuperficiCH} for the case $n=2$, \cite{Kleiner92} for the case $n=3$, and \cite{Croke84} for $n=4$. In particular in all the latter cases, denoting with $\omega_n$ the volume of the unit ball in $\mathbb R^n$, one has that $P(\Omega)\geq n\omega_n^{1/n}(\vol\Omega)^{(n-1)/n}$ for every finite perimeter set $\Omega\subset M^n$, and thus $I_{(M^n,g)}\geq I_{\mathbb (\mathbb R^n,g_{\mathrm{eu}})}$ when $2\leq n\leq 4$. As a result $I_{(M^n,g)}= I_{(\mathbb R^n,g_{\mathrm{eu}})}$ when $2\leq n\leq 4$.

If $\Omega$ is an isoperimetric region, since $2\leq n\leq 4$, we conclude that $\Omega$ is smooth (see \cite[Proposition 2.4]{RitRosales04}, or \cite{morgan2003regularity}). Thus, the rigidity  for the isoperimetric inequalities proved in \cite[p. 1]{WeilIsoperimetricaSuperficiCH}, \cite{Kleiner92}, and \cite{Croke84}, implies that every isoperimetric region $\Omega$ is isometric to a Euclidean ball of the same volume, thus completing the proof of the theorem.
\end{proof}


The argument that follows, providing examples of nonexistence of isoperimetric sets, is inspired by the parallel situation described in \cite[Example 5.6 and Example 5.7]{MondinoSpadaro} constituted by the isoperimetric-isodiametric problem.

\begin{example}[Nonexistence of isoperimetric sets]\label{ex:NonExistence}
An example of $2$-dimensional manifold satisfying the hypotheses of \cref{thm:CartanHadamard} is the helicoid. Indeed, the helicoid is simply connected, $\sect\leq 0$, being a minimal surface, and it can be readily checked that its sectional curvature tends to zero as the distance from the rotation axis increases. Taking into account the periodicity of the helicoid along its rotation axis, then $\ric \ge k$. An easy application of \cref{lem:ComparisonMetrics} shows that a sequence of points $p_j$ whose distance from the rotation axis diverges satisfies the hypotheses of \cref{thm:CartanHadamard}. Since also $\sect \neq 0$ at every point, no isoperimetric regions exist on the helicoid.

Moreover, if $(\Sigma,g)$ is a Cartan--Hadamard surface with induced distance $\dist$ and with asymptotically vanishing sectional curvature (see \cref{def:AVSC} below for the precise definition) then \cref{thm:CartanHadamard} allows us to conclude that $I_{(\Sigma,g)}=I_{(\mathbb R^2,g_{\mathrm{eu}})}$. Indeed since the sectional curvature is asymptotically vanishing, then $(\Sigma,g)$ clearly satisfies a uniform lower bound on the Ricci tensor; moreover, since the sectional curvature is asymptotically vanishing and $\inj(p)=+\infty$ for every $p\in M^n$, the result in \cref{prop:C0Asymp}  below allows to conclude that for every diverging sequence $p_j\in \Sigma$ we have
$$
(\Sigma,\dist,p_j)\xrightarrow{j\to+\infty} (\mathbb R^n,d_{\mathrm{eu}},0),
$$
in the pGH topology.
Thus all the hypotheses of \cref{thm:CartanHadamard} are satisfied and we get the sought equality.
Moreover, if in addition $\sect= 0$ at most at isolated points of $\Sigma$, we conclude from the rigidity part of \cref{thm:CartanHadamard} that no isoperimetric regions of any volume can exist on $\Sigma$. An example satisfying the previous conditions is the saddle, i.e., the surface of equation $z=x^2-y^2$ in $\mathbb R^3$.

In order to construct examples that satisfy the hypotheses of \cref{thm:CartanHadamard} in dimension $n>2$, one can take $(\Sigma,g)$ an arbitrary Cartan--Hadamard surface with Ricci uniformly bounded below satisfying the pGH-limit hypothesis in \cref{thm:CartanHadamard} (e.g., the previously discussed helicoid and saddle), and consider $\Sigma\times \mathbb R$, and $\Sigma\times \mathbb R^2$. Moreover, if one chooses $\Sigma$ such that $\sect= 0$ at most at isolated points of $\Sigma$, then $\Sigma\times\mathbb R$ and $\Sigma\times\mathbb R^2$ cannot have isoperimetric regions of any volume, since rigidity in \cref{thm:CartanHadamard} holds.
\end{example}

We mention another related class of Riemannian manifolds such that no isoperimetric regions exist, in addition to the Cartan--Hadamard manifolds above. These are studied in \cite{Rit01NonExistence} and consist of particular radial metrics of the form $\d t^2 + f(t)^2\d\theta^2$, where $\d\theta^2$ is the metric of the unit circle on $\R^2$. In such a case $\sect$ is a function of $t$, and if $t\mapsto \sect(t)$ is increasing and $\sup \sect$ is never achieved on the surface, then no isoperimetric regions exist \cite[Theorem 2.16]{Rit01NonExistence}.

\section{Asymptotic mass decomposition of minimizing sequences}\label{sec:MassDecomposition}

This section is devoted to the proof of the main result of the work, that yield an asymptotic description of the behavior of minimizing sequences (for the perimeter) that possibly lose part of the mass at infinity, culminating in \cref{thm:MassDecomposition}, that constitutes a more detailed version of \cref{thm:MassDecompositionINTRO}.

\smallskip

The starting point is a classical result due to Ritoré--Rosales that can be found in \cite[Theorem 2.1]{RitRosales04}, and which is meaningful for noncompact Riemannian manifolds of infinite volume.

\begin{thm}\label{thm:RitoreRosales}
Let $(M^n,g)$ be a complete noncompact Riemannian manifold, and fix $V>0$ and $o\in M^n$. Let $\{\Omega_i\}_{i\in\mathbb N}$ be a minimizing (for the perimeter) sequence of finite perimeter sets of volume $V$. Then there exists a diverging sequence $\{r_i\}_{i\in\mathbb N}$ such that
\begin{itemize}
    \item[(i)] $\Omega_i^c:=\Omega_i\cap B_{r_i}(o)$ and $\Omega_i^d:=\Omega_i\setminus B_{r_i}(o)$ are sets of finite perimeter with 
    $$
    \lim_{i\to+\infty}\left(P(\Omega_i^d)+P(\Omega_i^c)\right)=I(V).
    $$
    \item[(ii)] There exists a finite perimeter set $\Omega$ with $\vol(\Omega)\leq V$ such that 
    $$
    \lim_{i\to +\infty}\vol(\Omega_i^c)=\vol(\Omega), \qquad \lim_{i\to +\infty}P(\Omega_i^c)=P(\Omega).
    $$
    Moreover $\Omega^c_i \to \Omega$ in $L^1_{\rm loc}(M^n,g)$.
    \item[(iii)] $\Omega$ is an isoperimetric region for its own volume.
\end{itemize}
\end{thm}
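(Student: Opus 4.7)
\medskip

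\textbf{Proof proposal.} The plan is to extract a locally convergent subsequence, then choose the cutoff radii $r_i$ so that the perimeter splits almost additively along the slicing spheres, and finally to realize $\Omega$ as an isoperimetric region by feeding competitors back into the minimizing sequence after inserting them at a diverging basepoint.

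First, since $P(\Omega_i) \to I(V)$ is bounded and $\vol(\Omega_i)=V$, BV--compactness on each ball $B_R(o)$ together with a standard diagonal extraction yields a subsequence (not relabeled) and a set $\Omega$ of locally finite perimeter with $\Omega_i \to \Omega$ in $L^1_{\rm loc}(M^n,g)$ and $\vol(\Omega)\leq V$. Next, the coarea formula gives
\[
\int_0^{+\infty} \mathcal{H}^{n-1}(\Omega_i\cap \partial B_r(o))\de r \;=\; \vol(\Omega_i)\;=\;V,
\]
so on every annulus $B_{R+1}(o)\setminus B_R(o)$ the mean value theorem provides a radius on which the slice has measure at most $V$. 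By pushing $R=R_i\to+\infty$ slowly enough and selecting, via Fubini on this family and on the analogous integral giving $P(\Omega_i,B_{R_i+1}\setminus B_{R_i})$, a radius $r_i \in (R_i,R_i+1)$ such that simultaneously
\[
\mathcal{H}^{n-1}(\Omega_i\cap \partial B_{r_i}(o)) \xrightarrow[i\to\infty]{} 0,
\qquad
|D\chi_{\Omega_i}|(\partial B_{r_i}(o))=0,
\]
we obtain the desired cutoff. For such $r_i$ the perimeter decomposes as
\[
P(\Omega_i^c) + P(\Omega_i^d) \;=\; P(\Omega_i) + 2\,\mathcal{H}^{n-1}(\Omega_i\cap \partial B_{r_i}(o)) \;\xrightarrow[i\to\infty]{}\; I(V),
\]
which proves (i); by further diagonalization we may also arrange that $\vol(\Omega_i^c)\to\vol(\Omega)$, exploiting $r_i\to+\infty$ and local convergence.

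For the two limit statements in (ii), lower semicontinuity of the perimeter under $L^1_{\rm loc}$-convergence gives $P(\Omega)\leq \liminf_i P(\Omega_i^c)$. For the matching upper bound I would build a competitor of volume $V$ by pairing $\Omega$ with $\Omega_i^d$: since $\Omega$ is essentially contained in a large compact set while $\Omega_i^d\subset M^n\setminus B_{r_i}(o)$ with $r_i\to\infty$, the sets are eventually disjoint, and a small volume correction via a ball of radius $\rho_i\to 0$ placed far from both pieces (using noncompactness and $\vol(M^n)=\infty$) produces a competitor $W_i$ with $\vol(W_i)=V$ and $P(W_i) = P(\Omega) + P(\Omega_i^d) + \smallO(1)$. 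Combined with $P(W_i)\geq I(V)$ and the identity in (i), this forces $\limsup_i P(\Omega_i^c) \leq P(\Omega)$, giving $P(\Omega_i^c)\to P(\Omega)$, while the volume convergence is already in place. The $L^1_{\rm loc}$ convergence $\Omega_i^c \to \Omega$ follows from $\Omega_i\to\Omega$ locally, since $B_{r_i}(o)$ eventually contains every fixed compact set.

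Finally, for (iii), suppose $\tilde\Omega$ is a finite perimeter set with $\vol(\tilde\Omega)=\vol(\Omega)$. By \cref{rem:Approximation} we may take $\tilde\Omega$ bounded and smooth. Translating $\tilde\Omega$ is not available, but we can instead glue it to the diverging part of the minimizing sequence: for every $i$ there is a point $z_i\in M^n$ so far from $\tilde\Omega$ and with $\Omega_i^d$ far from $\tilde\Omega$ as well, which yields a disjoint competitor $\tilde\Omega \sqcup \Omega_i^d$ of volume $\vol(\tilde\Omega)+V-\vol(\Omega_i^c)\to V$; a small ball correction once again gives $W_i'$ with $\vol(W_i')=V$ and $P(W_i')=P(\tilde\Omega)+P(\Omega_i^d)+\smallO(1)$. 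The inequality $P(W_i')\geq I(V)$, passed to the limit using (i) and $P(\Omega_i^c)\to P(\Omega)$, yields $P(\tilde\Omega)\geq P(\Omega)$, as required.

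The main obstacle I expect is the simultaneous calibration of the cutoff radii $r_i$: they must diverge, lie on good level sets of the distance (so the slicing identity holds), be chosen so that $\mathcal{H}^{n-1}(\Omega_i\cap \partial B_{r_i}(o))\to 0$, and additionally guarantee $\vol(\Omega_i^c)\to\vol(\Omega)$. This requires combining coarea with a diagonal extraction against the $L^1_{\rm loc}$-convergence, and it is the only point where some delicacy is needed; the competitor constructions in (ii) and (iii) are then essentially routine modulo the small-ball volume correction that relies on $\vol(M^n)=+\infty$.
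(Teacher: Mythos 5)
The paper does not actually prove this statement: it is quoted from Ritor\'e--Rosales \cite[Theorem 2.1]{RitRosales04} and used as a black box, so there is no in-paper argument to compare against. Your reconstruction follows the classical scheme of that reference (diagonal extraction of an $L^1_{\rm loc}$ limit, coarea slicing to pick good radii, cut-and-paste competitors built from $\Omega$ or $\tilde\Omega$ together with the escaping part $\Omega_i^d$), and the architecture is correct. The calibration of $r_i$ that you flag as the delicate point is indeed resolvable exactly along the lines you indicate: choose, by a diagonal argument, two diverging radii $\rho_i<\rho_i'$ with $\rho_i'\ge\rho_i+1$ at both of which $\vol(\Omega_i\cap B_{\rho}(o))\to\vol(\Omega)$ (the upper bound being automatic since $\vol(\Omega\cap B_\rho(o))\le\vol(\Omega)$ for every $\rho$), and then slice by coarea inside $(\rho_i,\rho_i')$ to kill the boundary term while squeezing the volume.

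Two points need tightening. First, in both (ii) and (iii) your volume correction only covers a deficit (``a small ball placed far away''), but the error $\vol(\Omega)-\vol(\Omega_i^c)$, resp.\ $\vol(\tilde\Omega)+\vol(\Omega_i^d)-V$, can have either sign, so the glued competitor may have volume \emph{exceeding} $V$; you must also be able to remove volume at negligible perimeter cost (delete a small ball around a density-one point, or first truncate to $\Omega\cap B_T(o)$ along a good slice so that the total volume is $\le V$ before adding the ball). The cleaner classical device --- the one the paper itself records in \cref{sec:BoundIsopRegions}, see \eqref{eq:LocalModificationsSets} --- is the first-variation volume-fixing deformation, which adjusts the volume by any small $\eps$ of either sign inside a fixed ball at perimeter cost $C|\eps|$, and avoids far-away balls altogether (producing a ball of exactly the prescribed small volume, disjoint from the other pieces and with vanishing perimeter, is itself not completely free in a general complete noncompact manifold without curvature or noncollapsing assumptions). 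Second, you assert that $\Omega$ and $\Omega_i^d$ are eventually disjoint because ``$\Omega$ is essentially contained in a large compact set''; an $L^1_{\rm loc}$ limit need not be bounded. This is harmless --- $\vol(\Omega\setminus B_{r_i}(o))\to0$, so either truncate $\Omega$ first or replace disjointness by the submodularity $P(A\cup B)\le P(A)+P(B)$ together with the corresponding volume bookkeeping --- but as written the disjointness claim is unjustified.
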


We will need another classical and fundamental property of isoperimetric regions. In \cref{iso-bounded-general} we prove that the validity of an isoperimetric inequality for small volumes implies that isoperimetric regions are bounded.
Interestingly, noncollapsed manifolds with Ricci curvature bounded from below satisfy such an isoperimetric inequality.  This follows from \cite[Lemma 3.2]{Heb00}. Thus, such manifolds have bounded isoperimetric regions and we can state the following result.

\begin{cor}\label{cor:IsopBounded}
Let $(M^n,g)$ be a complete noncollapsed Riemannian manifold with $\ric \geq (n-1)k$ for some $k \in (- \infty, 0]$. Then the isoperimetric regions of $(M^n,g)$ are bounded.
\end{cor}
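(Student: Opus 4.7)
The plan is a two-step reduction. First, I would invoke the general boundedness principle proved in the appendix as \cref{iso-bounded-general}, which states that on a complete Riemannian manifold satisfying an Euclidean-type isoperimetric inequality for sufficiently small volumes, every isoperimetric region must be bounded. Second, I would verify that complete noncollapsed manifolds with $\ric \geq (n-1)k$ do enjoy such a small-volume isoperimetric inequality; combining the two immediately yields the corollary.

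For the second step, the relevant statement is a uniform inequality
\[
P(E) \geq c \, \vol(E)^{(n-1)/n} \qquad \text{whenever } \vol(E) \leq v_1,
\]
for constants $c,v_1>0$ depending only on $n$, $k$, and the noncollapsing constant $v_0$. This is exactly the content of \cite[Lemma 3.2]{Heb00}: the uniform lower Ricci bound together with a uniform lower bound on the volume of unit balls yields, via Bishop--Gromov and Croke/Buser-type arguments, a local Sobolev inequality with uniform constants on balls of a fixed radius, from which the Euclidean isoperimetric estimate on small sets is extracted by a standard localization/covering argument.

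The conceptual weight of the proof lies in \cref{iso-bounded-general}, whose strategy is a volume-shaving argument. Assume for contradiction that an isoperimetric region $\Omega$ is unbounded and set $V(R) \eqdef \vol(\Omega \setminus B_R(o))$; then $V(R)>0$ for all $R$ while $V(R) \to 0$ as $R \to +\infty$, so $V(R)<v_1$ eventually. Comparing $\Omega$ with the competitor obtained by truncating it to $\Omega \cap B_R(o)$ and reattaching a small ball of the same missing volume placed far away yields, after applying the small-volume isoperimetric inequality to $\Omega \setminus B_R(o)$ on one side and the Bishop--Gromov perimeter estimate to the refilling ball on the other (so that its perimeter is bounded by, say, $(c/2)\, V(R)^{(n-1)/n}$ for $R$ large), a differential inequality of the form $-V'(R) \geq \alpha V(R)^{(n-1)/n}$ a.e., having used the coarea formula to identify $\mathcal{H}^{n-1}(\Omega \cap \partial B_R(o))$ with $-V'(R)$. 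Integration then extinguishes $V(R)$ in finite radius, contradicting unboundedness. The main obstacle is precisely this constant balancing: it is essential that the perimeter of the refilling ball be controlled by a constant \emph{strictly smaller} than the one in the small-volume isoperimetric inequality, a gain that requires the noncollapsing hypothesis to propagate, through Bishop--Gromov, to quantitative volume and perimeter comparisons for small balls throughout $M^n$.
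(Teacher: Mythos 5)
Your reduction is exactly the paper's proof of this corollary: one invokes \cref{iso-bounded-general} and checks its hypothesis via \cite[Lemma 3.2]{Heb00}, which supplies the uniform Euclidean-type isoperimetric inequality for small volumes on complete noncollapsed manifolds with $\ric\ge(n-1)k$. Up to that point the argument is complete and identical to the paper's.

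However, the sketch you give of the proof of \cref{iso-bounded-general} — which you rightly identify as carrying the conceptual weight — is not the argument in the appendix, and as stated it cannot work. You restore the truncated volume by attaching a far-away ball and require its perimeter to be at most $(c_0/2)\,V(R)^{(n-1)/n}$, where $c_0$ is the constant of the small-volume isoperimetric inequality. This is impossible: \emph{every} set of volume $V<v_0$, the refilling ball included, has perimeter at least $c_0 V^{(n-1)/n}$ by that very inequality (and in fact a ball of volume $V$ has perimeter of order $n\omega_n^{1/n}V^{(n-1)/n}$, while necessarily $c_0\le n\omega_n^{1/n}$ since small geodesic balls are competitors). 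So in the resulting chain $c_0V^{(n-1)/n}\le 2|V'|+P(\text{ball})$ the two terms of order $V^{(n-1)/n}$ cancel rather than leaving a positive gap, and no differential inequality $-V'\gtrsim V^{(n-1)/n}$ follows. The paper's proof of \cref{iso-bounded-general} avoids this by restoring the lost volume with a \emph{volume-fixing variation} supported in a fixed ball $B_R(p_0)$ with $P(E,B_R(p_0))>0$: deforming $E$ by the flow of a compactly supported vector field produces a competitor whose perimeter increases only by $C\,V(r)$, i.e.\ \emph{linearly} in the restored volume, hence of strictly lower order than $V(r)^{(n-1)/n}$ for small $V(r)$. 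It is this order gain — not a constant gain from Bishop--Gromov — that closes the ODE argument and extinguishes $V$ in finite radius.
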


\subsection{Concentration lemmas}

The following lemma contains a so-called concentration-compactness result that will play a key role in the study of the decomposition of the diverging mass of minimizing sequences. The result is rather classical and could be stated at the level of measure theory, however we include here a brief proof specializing the concentration-compactness principle to a sequence of sets under the form we will apply it. The following result is inspired by \cite[Lemma I.1]{Lions84I}.

\begin{lemma}[Concentration-compactness]\label{lem:CoCo}
Let $(M^n,g)$ be a complete noncompact Riemannian manifold and let $E_i$ be a sequence of bounded measurable sets such that $\lim_i \vol(E_i) = W \in(0,+\infty)$. Then, up to passing to a subsequence, exactly one of the following alternatives occur.
\begin{enumerate}
    \item\label{it:COCOvanishing} For any $R>0$ it holds
    \[
    \lim_i \sup_{p\in M} \vol(E_i \cap B_R(p)) =0.
    \]
    
    \item\label{it:COCOCompactness} There exists a sequence of points $p_i \in M^n$ such that for any $\eps\in(0,W/2)$ there exist $R\ge 1$, $i_\eps \in \N$ such that $\vol(E_i \cap B_R(p_i)) \ge W-\eps$ for any $i \ge i_\eps$. Moreover, there is $I\in \N, r\ge 1$ such that $\vol(E_i \cap B_r(p_i)) \ge \vol(E_i \cap B_r(q))$ for any $q \in M^n$ and $\vol(E_i \cap B_r(p_i)) > W/2$ for any $i \ge I$.
    
    
    \item \label{it:COCODicotomia}
    There exists $w\in(0,W)$ such that for any $\eps\in(0,w/2)$ there exist $R\ge1$, $i_\eps \in \N$, a sequence of points $p_i \in M^n$, and a sequence of open sets $U_i$ such that
    \[
    U_i = M^n \setminus B_{R_i}(p_i) \quad\text{for some $R_i\to + \infty$},\,\,\text{and then}\,\, \dist(p_i,U_i)\xrightarrow{i\to+\infty}+\infty,
    \]
    and moreover
    \begin{equation*}
        \begin{split}
            |\vol(E_i \cap B_R(p_i)) - w | < \eps,& \\
            |\vol(E_i \cap U_i) - (W-w)| < \eps, &\\
             \vol(E_i \cap B_R(p_i)) \ge \vol(E_i \cap B_R(q))& \qquad \forall\,q \in M,
        \end{split}
    \end{equation*}
    for every $i \ge i_\eps$.
\end{enumerate}
\end{lemma}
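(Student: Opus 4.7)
The plan is to apply the classical concentration-compactness scheme of Lions to the \emph{concentration functions}
$$
Q_i(R) \eqdef \sup_{p \in M^n} \vol(E_i \cap B_R(p)),
$$
which are nondecreasing in $R$ and uniformly bounded by $\vol(E_i) \to W$. Since each $E_i$ is bounded, the map $p\mapsto \vol(E_i\cap B_R(p))$ is continuous with bounded support, so the supremum is attained at some point. By a Helly-type diagonal extraction, first on a dense countable set of radii and then extending by monotonicity, I pass to a subsequence along which $Q_i(R)\to Q(R)$ at every continuity point of the monotone limit $Q$, with $Q\le W$. Setting $w\eqdef \lim_{R\to\infty} Q(R)\in[0,W]$, the three cases $w=0$, $w=W$ and $0<w<W$ correspond exactly to the three alternatives of the statement.

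The case $w=0$ is immediate from $\sup_p \vol(E_i\cap B_R(p))=Q_i(R)\to Q(R)\le 0$ for any fixed $R>0$. For the compactness case $w=W$, the plan is to fix once and for all some $r\ge 1$ at which $Q$ is continuous and satisfies $Q(r)>3W/4$, and to let $p_i$ be any maximizer of $\vol(E_i\cap B_r(\cdot))$. This will be the sequence of the second alternative: the maximality at radius $r$ is automatic and $\vol(E_i\cap B_r(p_i))=Q_i(r)>W/2$ for $i$ large. Given $\epsilon\in(0,W/2)$, I choose $R_\epsilon\ge r$ (continuity point of $Q$) with $Q(R_\epsilon)>W-\epsilon/2$ and let $q_i^\epsilon$ maximize $\vol(E_i\cap B_{R_\epsilon}(\cdot))$; for $i$ large, $\vol(E_i\cap B_{R_\epsilon}(q_i^\epsilon))>W-\epsilon$. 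Since $\vol(E_i)\to W<W/2+(W-\epsilon)$, the two balls $B_r(p_i)$ and $B_{R_\epsilon}(q_i^\epsilon)$ must overlap inside $E_i$, forcing $\dist(p_i,q_i^\epsilon)\le r+R_\epsilon$ and hence $B_{R_\epsilon}(q_i^\epsilon)\subset B_{r+2R_\epsilon}(p_i)$, which yields the desired estimate with $R=r+2R_\epsilon$.

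In the dichotomy case $0<w<W$, given $\epsilon\in(0,w/2)$ I take $R=R_\epsilon$ (continuity point of $Q$) with $Q(R)>w-\epsilon/4$ and let $p_i$ be a maximizer for radius $R$; the maximality clause of the third alternative is then built-in, and $|\vol(E_i\cap B_R(p_i))-w|<\epsilon/2$ for $i$ large by $Q_i(R)\to Q(R)\le w$. To produce a diverging sequence $R_i\to\infty$ keeping the mass inside $B_{R_i}(p_i)$ close to $w$, I would diagonalize: pick continuity points $\rho_k\nearrow\infty$ with $\rho_k\ge R$, and for each $k$ choose $i_k$ so that $Q_i(\rho_k)<w+1/k$ for every $i\ge i_k$; then set $R_i=\rho_k$ for $i_k\le i<i_{k+1}$. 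The two-sided bound on $\vol(E_i\cap B_{R_i}(p_i))$ then follows from the upper estimate $\vol(E_i\cap B_{R_i}(p_i))\le Q_i(R_i)<w+1/k$ combined with the trivial lower estimate $\vol(E_i\cap B_{R_i}(p_i))\ge \vol(E_i\cap B_R(p_i))>w-\epsilon/2$ (valid since $R_i\ge R$), and pairing with $\vol(E_i)\to W$ gives the claimed control on $\vol(E_i\cap U_i)$.

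The main obstacles are the uniformity clauses. In the compactness case a single sequence $p_i$, chosen independently of $\epsilon$, must absorb the whole mass as $\epsilon\to 0$; in the dichotomy case the escaping radius $R_i$ must diverge while the mass in $B_{R_i}(p_i)$ stays pinned near $w$. Both are handled by the key non-disjointness principle that two balls each carrying mass $>W/2$ of $E_i$ cannot be disjoint (since $\vol(E_i)\to W$) and by a careful diagonal extraction interlacing the rate $\rho_k\nearrow\infty$ with the speed of convergence $Q_i(\rho_k)\to Q(\rho_k)$.
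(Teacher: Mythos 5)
Your proposal is correct and follows essentially the same route as the paper's proof: the same concentration functions $Q_i$, a Helly/BV-type selection of the monotone limit $Q$, the trichotomy on $\lim_{R\to\infty}Q(R)$, the same non-disjointness argument (two balls each carrying more than half of $\vol(E_i)$ must intersect) in the compactness case, and an equivalent diagonal choice of the diverging radii $R_i$ in the dichotomy case. No gaps worth flagging.
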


\begin{proof}
Define $Q_i(\rho)\eqdef \sup_{p\in M} \vol(E_i\cap B_\rho(p))$. The functions $Q_i:(0,+\infty)\to \R$ are nondecreasing and uniformly bounded, since $\vol(E_i)\to W$. Hence the sequence $Q_i$ is uniformly bounded in $BV_{\rm loc}(0,+\infty)$ and then, up to subsequence, there exists a nondecreasing function $Q \in BV_{\rm loc}(0,+\infty)$ such that $Q_i\to Q$ in $BV_{\rm loc}$ and pointwise almost everywhere. Also, let us pointwise define $Q(\rho)\eqdef \lim_{\eta\to0^+} {\rm ess}\inf_{(\rho-\eta,\rho)} Q$, so that $Q$ is defined at every $\rho \in (0,+\infty)$. Moreover, observe that $Q(\rho) \le W$ for any $\rho>0$. Now three disjoint cases can occur, distinguishing the cases enumerated in the statement.
\begin{enumerate}
\item We have that $\lim_{\rho\to+\infty} Q(\rho)=0$, and hence $Q\equiv 0$ since it is nondecreasing. Then item 1 of the statement clearly holds.

\item We have that $\lim_{\rho\to+\infty} Q(\rho)=W$. Then there is $r\ge1$ such that $\exists \lim_{i} \sup_p \vol(E_i \cap B_r(p)) = Q(r) \ge \tfrac34 W$. Since $E_i$ is bounded for any $i$, let $p_i\in M^n$ such that $\sup_p \vol(E_i \cap B_r(p))= \vol(E_i \cap B_r(p_i))$ for any $i$. We claim that the sequence $p_i$ satisfies the property in item 2. Indeed, let $\eps \in (0,W/2)$ be given. Arguing as above, since $\lim_{\rho\to+\infty} Q(\rho)=W$, there is a radius $r'>0$ and a sequence $p_i'\in M^n$ such that $\vol(E_i \cap B_{r'}(p_i') ) \ge W-\eps$ for any $i\ge i_\eps$. Then $\dist(p_i,p_i') < r + r'$, for otherwise
\[
W \xleftarrow{} \vol(E_i) \ge \vol(E_i \cap B_r(p_i)) + \vol (E_i \cap B_{r'}(p_i')) ,
\]
and the right hand side is $> W$ for $i$ large enough. Hence taking $R=r+2r'$ we conclude that $\vol ( E_i \cap B_R(p_i)) \ge W- \eps$ for $i\ge i_\eps$ as claimed.


\item We have that $\lim_{\rho\to+\infty} Q(\rho)=w\in(0,W)$. Then for given $\eps\in (0,w/2)$ there is $R\ge 1$ such that
\[
w-\frac\eps8 \le Q(R) = \lim_i \sup_p \vol(E_i \cap B_R(p)) = \lim_i \vol(E_i \cap B_R(p_i)),
\]
for some $p_i \in M^n$, where in the last equality we used that
\[
\sup_p \vol(E_i \cap B_R(p)) = \vol(E_i \cap B_R(p_i)),
\]
for some $p_i$ since $E_i$ is bounded. This implies that $\vol(E_i \cap B_R(p_i)) \ge \vol(E_i \cap B_R(q))$ for any $i$ and any $q \in M^n$, and there is $i_\eps$ such that $|\vol(E_i \cap B_R(p_i)) - w | < \eps/4$ for $i\ge i_\eps$.

For $i\ge i_\eps$, there is an increasing sequence $\rho_j\to+\infty$ such that $Q(\rho_j)=\lim_i Q_i(\rho_j)$ and we have
\begin{equation*}
    \begin{split}
        w & = \lim_{j\to+\infty} Q(\rho_j) = \lim_j \lim_i \sup_p \vol(E_i \cap B_{\rho_j}(p)) \ge \limsup_j \limsup_i \vol(E_i \cap B_{\rho_j}(p_i)) \\
        & = \limsup_j \limsup_i \left(\vol(E_i \cap B_{R}(p_i)) + \vol(E_i \cap B_{\rho_j}(p_i) \setminus B_R(p_i))\right) \\
        & \ge w- \frac\eps4 + \limsup_j \limsup_i  \vol(E_i \cap B_{\rho_j}(p_i) \setminus B_R(p_i)) .
    \end{split}
\end{equation*}
Then there is $j_0$ such that for any $j\ge j_0$ we have that $\rho_j>R$ and there is $i_j$, with $i_j$ increasing to $+\infty$ as $j\to+\infty$, that satisfies
\begin{equation}\label{eq:CCstima1}
\vol(E_i \cap B_{\rho_j}(p_i) \setminus B_R(p_i)) <\frac\eps2  \qquad \forall\, i \ge\max\{i_\eps,i_j\}.
\end{equation}
Hence define
\[
R_i \eqdef \rho_{\max\{j \st i \ge i_j\}}.
\]
In this way $\vol(E_i \cap B_{R_i}(p_i) \setminus B_R(p_i))  <\eps/2 $ for any $i\ge \max\{i_\eps,i_{j_0}\}$ by \eqref{eq:CCstima1}. Defining $U_i \eqdef M^n \setminus B_{R_i}(p_i)$ we finally get that $\dist(p_i,U_i)=R_i\to+\infty$ and
\[
\begin{split}
W \xleftarrow{} \vol(E_i) &= \vol(E_i \cap B_R(p_i)) + \vol( E_i \cap  B_{R_i}(p_i) \setminus B_R(p_i)) + \vol(E_i \cap U_i ) \\
&\le w + \frac34\eps + \vol(E_i \cap U_i ),
\end{split}
\]
for $i\ge \max\{i_\eps,i_{j_0}\}$. By the first line in the above identity, recalling that $|\vol(E_i\cap B_R(p_i))-w|<\varepsilon/4$, we also see that $\limsup_i \vol(E_i \cap U_i ) \le W-w+\epsilon/4$. Hence the proof of item 3 is completed renaming $\max\{i_\varepsilon,i_{j_0}\}$ into $i_\varepsilon$ and by eventually taking a slightly bigger $i$ in order to ensure the validity of the second inequality of item 3.
\end{enumerate}
\end{proof}

We briefly recall here a useful covering lemma for complete Riemannian manifolds with Ricci curvature bounded from below, cf. \cite[Lemma 1.1]{Heb00}.

\begin{lemma}\label{lem:Covering}
Let $k\in\mathbb R$ and let $(M^n,g)$ be a complete Riemannian manifold such that $\ric\geq (n-1)k$. Let $0<\rho<T_k$, where $T_k:=\pi/\sqrt{k}$ if $k>0$, or $T_k:=+\infty$ otherwise.
Then, there exists a countable family $\{B_{\rho}(x_i)\}_{i\in\mathbb N}$ of open balls such that
\begin{itemize}
    \item[(i)] $\cup_{i\in\mathbb N} B_\rho(x_i) = M^n$,
    \item[(ii)] $B_{\rho/2}(x_i)\cap B_{\rho/2}(x_j)=\emptyset$ for every $i,j\in\mathbb N$,
    \item[(iii)] for every $y\in M^n$ it holds 
    $$
    \sharp\{i:y\in B_{\rho}(x_i)\}\leq
    \sharp\{i:y\in B_{2\rho}(x_i)\}\leq \frac{v(n,k,6\rho)}{v(n,k,\rho/2)}.
    $$
\end{itemize}
\end{lemma}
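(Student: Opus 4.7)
The plan is to carry out a classical maximal-packing argument (Vitali-type) and then extract the multiplicity bound from Bishop--Gromov volume comparison, which is available under $\ric\ge (n-1)k$ for radii less than $T_k$.

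First I would build the family by a maximality argument. Consider the collection $\mathcal{F}$ of all subsets $S\subset M^n$ such that the family $\{B_{\rho/2}(x)\}_{x\in S}$ is pairwise disjoint; order $\mathcal{F}$ by inclusion. Since the union of a chain of such subsets still has the disjointness property, Zorn's lemma produces a maximal element $S_{\max}=\{x_i\}_{i\in I}$. Countability of $I$ follows because $M^n$ is second countable and the balls $B_{\rho/2}(x_i)$ are disjoint nonempty open sets, so they must index a countable set. This immediately gives property (ii) by construction.

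Next I would verify the covering property (i). If there existed $y\in M^n\setminus \bigcup_i B_\rho(x_i)$, then $\dist(y,x_i)\ge \rho$ for every $i$, which means $B_{\rho/2}(y)\cap B_{\rho/2}(x_i)=\emptyset$ for every $i$; then $S_{\max}\cup\{y\}$ would still lie in $\mathcal{F}$, contradicting maximality. Hence $\bigcup_i B_\rho(x_i)=M^n$.

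The key step is the multiplicity estimate (iii). Fix $y\in M^n$ and let $I_y\eqdef\{i\st y\in B_{2\rho}(x_i)\}$, whose cardinality we must bound. For $i\in I_y$, one has $\dist(x_i,y)<2\rho$, hence by the triangle inequality
\[
B_{\rho/2}(x_i)\subset B_{5\rho/2}(y)\subset B_{9\rho/2}(x_i)\subset B_{6\rho}(x_i).
\]
Since the balls $\{B_{\rho/2}(x_i)\}_{i\in I_y}$ are disjoint and contained in $B_{5\rho/2}(y)$, summing their volumes gives
\[
\sum_{i\in I_y}\vol(B_{\rho/2}(x_i))\le\vol(B_{5\rho/2}(y)).
\]
On the other hand, Bishop--Gromov comparison (\cref{thm:BishopGromov}) applied at $x_i$, valid because $\rho<T_k$ controls the relevant radii, yields
\[
\frac{\vol(B_{\rho/2}(x_i))}{v(n,k,\rho/2)}\ge\frac{\vol(B_{6\rho}(x_i))}{v(n,k,6\rho)}\ge\frac{\vol(B_{5\rho/2}(y))}{v(n,k,6\rho)},
\]
where the last inequality uses $B_{5\rho/2}(y)\subset B_{6\rho}(x_i)$. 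Combining, since $\vol(B_{5\rho/2}(y))>0$ on a Riemannian manifold, one divides out and obtains
\[
\#I_y\le\frac{v(n,k,6\rho)}{v(n,k,\rho/2)}.
\]
Since $\{i\st y\in B_\rho(x_i)\}\subset I_y$, this completes (iii).

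The only real technical point is the choice of the comparison radius $6\rho$, which must accommodate both the containment $B_{5\rho/2}(y)\subset B_{6\rho}(x_i)$ coming from $\dist(x_i,y)<2\rho$, and the validity range of Bishop--Gromov; the hypothesis $\rho<T_k$ (interpreted so that all the radii appearing in the comparison argument remain admissible) is exactly what makes the estimate meaningful. No substantive obstacle arises.
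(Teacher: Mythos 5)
Your proposal is correct and follows essentially the same route as the paper: a maximal packing of $\rho/2$-balls via Zorn's lemma, the covering property from maximality, and a Bishop--Gromov volume-counting argument for the multiplicity bound. The only cosmetic difference is in the counting step, where you apply Bishop--Gromov at each center $x_i$ and divide by $\vol(B_{5\rho/2}(y))$, while the paper compares all the disjoint balls to the one of smallest volume; both yield the same constant $v(n,k,6\rho)/v(n,k,\rho/2)$.
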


\begin{proof}
Let $\mathscr F$ be the collection of the countable families of pairwise disjoint balls $\{B_{\rho/2}(x_i):x_i\in M\}_{i\in\mathbb N}$ ordered with the relation $\subset$. By Zorn Lemma it is immediate to deduce the existence of a maximal, with respect to $\subset$, family $\mathscr G:=\{B_{\rho/2}(x_i):x_i\in M\}_{i\in\mathbb N}$ in $\mathscr F$. We want to show that $\mathscr G$ verifies the claims.

Item (ii) for the family $\mathscr G$ is verified by definition. 
Suppose by contradiction item (i) is false. Thus there exists $x\in M $ such that for every $i\in\mathbb N$ we have $d(x,x_i)\geq \rho$. Then, by the triangle inequality, we get that $B_{\rho/2}(x)\cap B_{\rho/2}(x_i)=\emptyset$ for all $i\in\mathbb N$. Thus $\mathscr G \cup \{B_{\rho/2}(x)\}$ is an element of $\mathscr F$ that strictly contains $\mathscr G$, giving a contradiction with the fact that $\mathscr G$ is maximal with respect to $\subset$.

In order to prove item (iii) for the family $\mathscr G$ let us first prove that the number $n$ of disjoint balls $B_{\rho/2}(\widetilde x_1),\dots,B_{\rho/2}(\widetilde x_n)$ that are contained in $B_{3\rho}(x)$, where $x,\widetilde x_1,\dots,\widetilde x_n\in M^n$, is bounded above by $v(n,k,6\rho)/v(n,k,\rho/2)$. Indeed, calling $B_{\rho/2}(\widetilde x_{i_0})$ one of the balls with the minimum volume among $B_{\rho/2}(\widetilde x_1),\dots,B_{\rho/2}(\widetilde x_n)$, we can estimate
$$
n\leq \frac{\vol(B_{3\rho}(x))}{\vol(B_{\rho/2}(\widetilde x_{i_0}))}\leq \frac{\vol(B_{6\rho}(\widetilde x_{i_0}))}{\vol(B_{\rho/2}(\widetilde x_{i_0}))}\leq \frac{v(n,k,6\rho)}{v(n,k,\rho/2)},
$$
where in the first inequality we are using that $B_{\rho/2}(\widetilde x_1),\dots,B_{\rho/2}(\widetilde x_n)$ are disjoint and contained in $B_{3\rho}(x)$, and $B_{\rho/2}(\widetilde x_{i_0})$ is one of the balls with the minimum volume among them; in the second inequality we are using $B_{3\rho}(x)\subset B_{6\rho}(\widetilde x_{i_0})$ by the triangle inequality; and in the third inequality we are using Bishop--Gromov volume comparison (see \cref{thm:BishopGromov}).

Thus the claim is proved. In order to conclude the proof of item (iii), let  $y\in M^n$ be an element of $n$ balls $B_{2\rho}(x_1)$, $\dots$, $B_{2\rho}(x_n)$ of the family $\mathscr G$ constructed above. Then, by the triangle inequality, $B_{\rho/2}(x_i)\subset B_{3\rho}(y)$ for every $1\leq i\leq n$. Since $B_{\rho/2}(x_1),\dots,B_{\rho/2}(x_n)$ are disjoint and contained in $B_{3\rho}(y)$ and since $y,x_1,\dots,x_n\in M^n$, the previous discussion implies that $n\leq v(n,k,6\rho)/v(n,k,\rho/2)$. As also $\{i:y\in B_{\rho}(x_i)\}\subset\{i:y\in B_{2\rho}(x_i)\}$, the proof of item (iii) is concluded.
\end{proof}

We can now deduce a lower bound on the concentration of the mass of a finite perimeter set. The following result is a simpler version of \cite[Lemma 2.5]{Nar14}.

\begin{lemma}[Local mass lower bound]\label{lem:MasLowBound}
Let $k\in\R$ and let $(M^n,g)$ be a complete Riemannian manifold such that $\ric\geq (n-1)k$. Assume that $(M^n,g)$ is noncollapsed with $\vol(B_1(q))\ge v_0>0$ for any $q \in M^n$. Then there exists a constant $C_{n,k,v_0}>0$ such that for any nonempty bounded finite perimeter set $E$ there exists $p_0\in M^n$ such that
\[
\vol(E\cap B_1(p_0) ) \ge \min \left\{C_{n,k,v_0} \frac{\vol(E)^n}{P(E)^n}, \frac{v_0}{2} \right\}.
\]
\end{lemma}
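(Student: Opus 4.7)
My plan is to perform a dichotomy on the quantity
\[
M \eqdef \sup_{p\in M^n}\vol(E\cap B_1(p)),
\]
which, since $E$ is bounded, is attained at some $p_0\in M^n$ (the function $p\mapsto \vol(E\cap B_1(p))$ is continuous almost everywhere and supported in a bounded set). If $M\ge v_0/2$ we take $p_0$ to be any near-maximizer and are done; so assume $M<v_0/2$. The strategy in this case is to control $\vol(E)$ from above by summing $\vol(E\cap B_1(x_i))$ over a good covering of $M^n$ and invoking a relative isoperimetric inequality on each ball.

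First I would apply \cref{lem:Covering} with $\rho=1$ (restricting without loss of generality to $k\le 0$, as noncompact manifolds with $\ric\ge (n-1)k>0$ would be excluded by Myers' theorem) to obtain balls $\{B_1(x_i)\}_{i\in\N}$ covering $M^n$, with $B_{1/2}(x_i)$ pairwise disjoint and the multiplicity of the family $\{B_2(x_i)\}_{i\in\N}$ (hence a fortiori of $\{B_1(x_i)\}_{i\in\N}$) uniformly bounded by $N\eqdef v(n,k,6)/v(n,k,1/2)$. Since on each $B_1(x_i)$ we have $\vol(E\cap B_1(x_i))\le M < v_0/2 \le \vol(B_1(x_i))/2$, I would invoke the relative isoperimetric inequality on $B_1(x_i)$, namely
\[
\vol(E\cap B_1(x_i))^{(n-1)/n} \le C_1\, P(E,B_1(x_i)),
\]
for a constant $C_1=C_1(n,k,v_0)>0$ depending only on the indicated parameters. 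Such an inequality is a consequence of the uniform volume doubling on scales $\le 1$ and Buser's Neumann--Poincaré inequality, both of which hold with uniform constants under $\ric\ge (n-1)k$ and $\vol(B_1(q))\ge v_0$ (cf. the Sobolev inequality of \cite[Lemma 3.2]{Heb00} already used elsewhere in the paper).

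Combining the two ingredients, I would write
\[
\vol(E\cap B_1(x_i)) = \vol(E\cap B_1(x_i))^{1/n}\cdot\vol(E\cap B_1(x_i))^{(n-1)/n} \le M^{1/n}\, C_1\, P(E,B_1(x_i)),
\]
and then sum over $i$ using the covering property $M^n=\bigcup_i B_1(x_i)$ and the multiplicity bound to conclude
\[
\vol(E) \le \sum_i \vol(E\cap B_1(x_i)) \le M^{1/n}\, C_1\sum_i P(E,B_1(x_i)) \le M^{1/n}\, C_1 N\, P(E).
\]
Rearranging yields $M \ge (C_1 N)^{-n}\,\vol(E)^n/P(E)^n$, so the statement follows with $C_{n,k,v_0}\eqdef (C_1 N)^{-n}$.

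The main obstacle is Step 2, namely securing the relative isoperimetric inequality on $B_1(x_i)$ with a constant independent of $i$. This is nontrivial on a general manifold, but under the standing hypotheses the uniform doubling on unit balls and the uniform Buser--Poincaré inequality supply it; the rest of the argument is a clean concentration-of-mass computation that exploits the sublinear exponent $(n-1)/n$ to turn a sum of local volume bounds into a single quantitative lower bound on $M$.
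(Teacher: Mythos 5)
Your proposal is correct and follows essentially the same route as the paper's proof: a dichotomy on whether the mass in some unit ball exceeds $v_0/2$, followed by the covering lemma with $\rho=1$, the uniform relative isoperimetric inequality on unit balls (the paper cites Maheux--Saloff-Coste for this, with the same smallness hypothesis $\vol(E\cap B_1)\le\tfrac12\vol(B_1)$ that your case split guarantees), and the splitting $\vol(E\cap B_1(x_i))=\vol(E\cap B_1(x_i))^{1/n}\vol(E\cap B_1(x_i))^{(n-1)/n}$ summed against the bounded-multiplicity estimate. Your version of the dichotomy ($M\ge v_0/2$ versus $M<v_0/2\le\vol(B_1(x_i))/2$) is a harmless cosmetic variant of the paper's.
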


\begin{proof}
Without loss of generality we can assume that $k \le 0$.
We distinguish two possible cases. If there is $p_0 \in M^n$ such that $\vol(E \cap B_1(p_0)) \ge \tfrac12 \vol(B_1(p_0))$, then clearly $\vol(E \cap B_1(p_0)) \ge v_0/2$ and we already have a lower bound. So suppose instead that
\begin{equation}\label{eq:MLB1}
\vol(E \cap B_1(p)) < \frac12 \vol(B_1(p)) 
\qquad
\forall\, p \in M.
\end{equation}
We apply \cref{lem:Covering} with $\rho =1$, which yields a covering $\{ B_1(x_i)\}_{i \in \N}$. Since $E$ is bounded, there is $i_0$ such that
\[
L \eqdef \sup_{i\in\N} \vol(E \cap B_1(x_i))^{\frac1n} = \vol(E \cap B_1(x_{i_0}))^{\frac1n}.
\]
By \eqref{eq:MLB1} we can apply the relative isoperimetric inequality in balls contained in \cite[Corollaire 1.2]{MaheuxSaloffCoste95}. This immediately gives that
\begin{equation}\label{eq:MLB2}
    \vol(E \cap B_1(p))^{\frac{n-1}{n}} \le c\, \mathcal{H}^{n-1}(\partial^* E \cap B_1(p))
    \qquad
\forall\, p \in M,
\end{equation}
where $c=c(n,k,v_0)$.
Therefore, using \eqref{eq:MLB2} and item (iii) in \cref{lem:Covering} we can estimate
\begin{equation*}
    \begin{split}
        \vol (E)
        & \le \sum_i \vol(E \cap B_1(x_i)) = \sum_i \vol(E \cap B_1(x_i))^{\frac1n} \vol(E \cap B_1(x_i))^{\frac{n-1}{n}} \\
        &\le L \sum_i c\, \mathcal{H}^{n-1}(\partial^* E \cap B_1(x_i)) \le L c \,\frac{v(n,k,6)}{v(n,k,1/2)} P(E) ,
    \end{split}
\end{equation*}
that is
\[
\vol( E \cap B_1(x_{i_0})) = L^n \ge C_{n,k,v_0}\frac{\vol(E)^n}{P(E)^n},
\]
where $C_{n,k,v_0} \eqdef \left(v(n,k,1/2)/(c\,v(n,k,6)) \right)^n$.
\end{proof}

\subsection{Asymptotic mass decomposition}
We are now ready to prove the following key result that has to be read as a generalization of \cite[Theorem 2]{Nar14}. Indeed, roughly speaking, we are going to prove that whenever a complete noncompact noncollapsed Riemannian manifold with a lower bound on $\ric$ is given, then the diverging part $\Omega_i^d$ of any perimeter-minimizing sequence, see \cref{thm:RitoreRosales}, can be splitted in different sets that convergence, in volume and perimeter, to isoperimetric regions in some pmGH limits at infinity. 

We thus recover, in the weaker setting of Gromov--Hausdorff convergence, the statement of \cite[Theorem 2]{Nar14}, except from the precise bound on the number of regions that go to infinity contained in \cite[item (X) of Theorem 2]{Nar14}\footnote{Such an upper bound has been eventually provided in the subsequent paper \cite{AntonelliNardulliPozzetta}.}, without asking anything a priori on the geometry at infinity of the manifold. For the proof we are inspired by the strategies of \cite{Nar14}, even though our reasoning is somewhat different as it heavily exploits the results from the nonsmooth theory discussed in \cref{sub:FiniteGH}.

\begin{thm}[Asymptotic mass decomposition]\label{thm:MassDecomposition}
Let $k\in\R$ and let $(M^n,g)$ be a complete noncompact Riemannian manifold such that $\ric\geq (n-1)k$. Assume that $(M^n,g)$ is noncollapsed with $\vol(B_1(q))\ge v_0>0$ for any $q \in M^n$.

Let $\{\Omega_i\}_{i\in\mathbb N}$ be a minimizing (for the perimeter) sequence of finite perimeter sets of volume $V>0$, assume that $\Omega_i$ is bounded for any $i$, and let $\Omega_i^c,\Omega_i^d$ be as in \cref{thm:RitoreRosales}.

If $\lim_i \vol(\Omega_i^d) = W>0$, then, up to subsequence, there exist an increasing sequence of natural numbers $N_i\ge1$, a sequence of points $p_{i,j} \in M^n$ for $j=1,\ldots,N_i$, a sequence of radii $T_{i,j} \ge 1$ for $j=1,\ldots,N_i$ verifying the following properties.
\begin{itemize}
    \item[(i)] Letting $\overline{N}\eqdef \lim_i N_i \in \N \cup \{+\infty\}$, we have
    \begin{equation}\label{eq:AsympMassDecomp1}
    \begin{split}
        \lim_i \dist(p_{i,j},q)=+\infty & \qquad \forall\,q\in M,\forall\,j < \overline{N}+1,\\
        \lim_i \dist(p_{i,j},p_{i,k})=+\infty & \qquad \forall\,j\neq k < \overline{N}+1,\\
        B_{T_{i,j}}(p_{i,j}) \cap B_{T_{i,k}}(p_{i,k}) = \emptyset & \qquad \forall\,i\in \N, \forall\, j\neq  k\leq N_i, \\
        &\qquad j\neq \overline{N}, k \neq \overline{N} ,\\
        \lim_i T_{i,j} = T_j < + \infty & \qquad \forall\,j < \overline{N}, \\
        \text{if also $\overline{N}<+\infty$, then $\lim_i T_{i,\overline{N}} = + \infty$ and }&\\ 
        \partial B_{T_{i,\overline{N}}}(p_{i,\overline{N}}) \cap \partial B_{T_{i,j}}(p_{i,j}) =\emptyset& \qquad \forall\,i\st N_i=\overline{N}, \forall \,j<\overline{N}.
    \end{split}
    \end{equation}
    
    \item[(ii)] Denoting $ G_i\eqdef B_{T_{i,\overline{N}}}(p_{i,\overline{N}}) \cap \Omega_i^d  \setminus \bigcup_{j=1}^{\overline{N}-1} B_{T_{i,j}}(p_{i,j}) $ if $\overline{N}<+\infty$ and $i$ is such that $N_i=\overline{N}$, it holds that
    \begin{equation*}
        \lim_i P(\Omega_i^d) = \begin{cases}
        \lim_i P(G_i) +\sum_{j=1}^{\overline{N}-1} P( \Omega_i^d \cap B_{T_{i,j}}(p_{i,j}) )
        & \text{ if } \overline{N}<+\infty,\\
        \lim_i \sum_{j=1}^{N_i}  P( \Omega_i^d \cap B_{T_{i,j}}(p_{i,j}) )
        & \text{ if } \overline{N}=+\infty,
        \end{cases}
    \end{equation*}
    
    
    \item[(iii)] For any $j < \overline{N}+1$ there exists an $\RCD((n-1)k,n)$ space, points $p_j \in X_j$ and Borel sets $Z_j\subset X_j$ such that
    \begin{equation}\label{eq:Itemiii1}
        \begin{split}
            (M^n,\dist,\vol,p_{i,j}) \xrightarrow[i]{} (X_j,\dist_j,\mathfrak{m}_j, p_j) & \qquad \text{in the $pmGH$ sense for any $j$},\\
            \Omega_i^d \cap B_{T_{i,j}}(p_{i,j}) \xrightarrow[i]{} Z_j \subset X_j& \qquad \text{in the $L^1$-strong sense for any $j<\overline{N}$}, \\
            \vol(\Omega_i^d \cap B_{T_{i,j}}(p_{i,j})) \xrightarrow[i]{} \mathfrak{m}_j(Z_j)& \qquad \forall \,j<\overline{N} \\
            \lim_i P(\Omega_i^d \cap B_{T_{i,j}}(p_{i,j})) = P_{X_j} (Z_j)& \qquad \forall \,j<\overline{N},
        \end{split}
    \end{equation}
    and if $\overline{N}<+\infty$ then
    \begin{equation}\label{eq:Itemiii2}
        \begin{split}
        G_i
            \xrightarrow[i]{} Z_{\overline{N}} \subset X_{\overline{N}}  & \qquad \text{in the $L^1$-strong sense},\\
            \vol(G_i)
            \xrightarrow[i]{} \mathfrak{m}_{\overline{N}}(Z_{\overline{N}}), & \\
            \lim_i P(G_i)
            = P_{X_{\overline{N}}} (Z_{\overline{N}}),
        \end{split}
    \end{equation}
    where $P_{X_j}$ is the perimeter functional on $(X_j,\dist_j,\mathfrak{m}_j)$, and $Z_j$ is an isoperimetric region in $X_j$ for any $j< \overline{N}+1$. Moreover, the measures $\meas_j$'s are the Hausdorff measures with respect to the distance on the corresponding spaces for any $j<\overline{N}+1$.
    
    \item[(iv)] It holds that
 \begin{equation}\label{eq:Itemiv2}
    I(V) = P(\Omega) + \sum_{j=1}^{\overline{N}} P_{X_j} (Z_j),
    \qquad\qquad
    V=\vol(\Omega) +  \sum_{j=1}^{\overline{N}} \mathfrak{m}_j(Z_j),
    \end{equation}
    where $\Omega=\lim_i \Omega_i^c$ is as in \cref{thm:RitoreRosales}. In particular
    \begin{equation}\label{eq:Itemiv1}
        \begin{split}
            &\lim_i P(\Omega_i^d)  = \sum_{j=1}^{\overline{N}} P_{X_j} (Z_j),
            \qquad\qquad
             W= \sum_{j=1}^{\overline{N}} \mathfrak{m}_j(Z_j).
        \end{split}
    \end{equation}
\end{itemize}
\end{thm}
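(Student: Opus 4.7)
The strategy is to iteratively apply the concentration--compactness lemma (\cref{lem:CoCo}) to the diverging part $\Omega_i^d$. Since $\lim_i(P(\Omega_i^c)+P(\Omega_i^d))=I(V)$ by \cref{thm:RitoreRosales}(i), one has $\sup_i P(\Omega_i^d)<+\infty$, and the local mass lower bound \cref{lem:MasLowBound} rules out the vanishing alternative whenever the residual mass is positive: any bounded finite perimeter set of volume $w>0$ and perimeter bounded by $P$ must carry mass at least $\min\{Cw^n/P^n,v_0/2\}$ in some unit ball. Consequently each iteration falls into either compactness or dichotomy, and an at most countable family of pieces is produced.

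At step $j$, I would apply \cref{lem:CoCo} to the residual obtained after removing $\bigcup_{k<j}B_{T_{i,k}}(p_{i,k})$ from $\Omega_i^d$. In the dichotomy case one extracts a concentration point $p_{i,j}$ and a bounded radius $T_{i,j}\to T_j<+\infty$ capturing a positive mass $w_j>0$, together with a leftover piece supported outside a ball of radius $R_i\to+\infty$ around $p_{i,j}$; one then iterates on this leftover. If compactness occurs at some step $\overline N<+\infty$, the entire residual mass is asymptotically captured by a single ball $B_{T_{i,\overline N}}(p_{i,\overline N})$ with $T_{i,\overline N}\to+\infty$, extracted diagonally from the $\varepsilon$-dependence in alternative~\ref{it:COCOCompactness} of \cref{lem:CoCo}; otherwise the process produces countably many pieces with summable tails. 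The divergence conditions in \eqref{eq:AsympMassDecomp1} follow from $\Omega_i^d\subset M^n\setminus B_{r_i}(o)$ with $r_i\to+\infty$ together with the dichotomy placing each residual at diverging distance from the previous concentration points, and disjointness of the balls holds automatically for large $i$. Coarea slicing (\cref{rem:PerimeterMMS}) is used to pick the $T_{i,j}$ inside good slices so that $\mathcal H^{n-1}(\Omega_i^d\cap\partial B_{T_{i,j}}(p_{i,j}))$ becomes negligible, yielding the perimeter additivity of item (ii).

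Once the decomposition is in place, Gromov's precompactness (\cref{rem:GromovPrecompactness}) together with noncollapsedness and \cref{thm:volumeconvergence}(a) provides, up to further subsequences, pmGH limits $(X_j,\dist_j,\meas_j,p_j)$ which are $\ncRCD((n-1)k,n)$ spaces. For each $j$ with $T_{i,j}$ eventually bounded, the sets $\Omega_i^d\cap B_{T_{i,j}}(p_{i,j})$ are uniformly bounded with uniformly bounded perimeter, so \cref{prop:SemicontinuitaAmbrosioBrueSemola}(a) yields $L^1$-strong convergence to a finite perimeter set $Z_j\subset\overline B_{T_j}(p_j)$ along with perimeter lower semicontinuity and $\vol(\Omega_i^d\cap B_{T_{i,j}}(p_{i,j}))\to\meas_j(Z_j)$. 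For the last piece $G_i$ when $\overline N<+\infty$, \cref{prop:SemicontinuitaAmbrosioBrueSemola}(b) gives $L^1_{\mathrm{loc}}$ convergence to some $Z_{\overline N}\subset X_{\overline N}$, which the mass concentration granted by the compactness alternative upgrades to $L^1$-strong convergence and full volume convergence.

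To close the argument and obtain both isoperimetricity of each $Z_j$ and the identities in (iv), I would chain the lower semicontinuity estimate with the profile comparison \eqref{eq:InfinityInequalityIsopProfile}. Writing $V_0=\vol(\Omega)$ and $V_j=\meas_j(Z_j)$, and observing that $V_0+\sum_j V_j=V$ once the coarea slicing eliminates any mass shaved off by the cuts, one obtains
\[
I(V)\leq I(V_0)+\sum_{j=1}^{\overline N}I_{X_j}(V_j)\leq P(\Omega)+\sum_{j=1}^{\overline N}P_{X_j}(Z_j)\leq\liminf_i P(\Omega_i)=I(V),
\]
where the first inequality is \eqref{eq:InfinityInequalityIsopProfile}, the second combines $I(V_0)=P(\Omega)$ (since $\Omega$ is isoperimetric by \cref{thm:RitoreRosales}(iii)) with $I_{X_j}(V_j)\leq P_{X_j}(Z_j)$, and the third is the perimeter splitting from item (ii). All inequalities must be equalities, hence each $Z_j$ is isoperimetric in $X_j$ at volume $V_j$ and (iv) follows. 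The main obstacle I expect is precisely the delicate management of the cutting radii $T_{i,j}$ across the iterative extractions: ensuring compatibility of diagonal subsequences with the pmGH realizations, and making the perimeter and the volume split genuinely additively -- especially for the last expanding ball when $\overline N$ is finite -- requires a careful combination of coarea slicing, diagonal extraction, and the uniform perimeter bound.
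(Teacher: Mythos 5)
Your overall architecture matches the paper's: iterated concentration--compactness on $\Omega_i^d$ with \cref{lem:MasLowBound} excluding vanishing, coarea slicing to choose the cutting radii, \cref{rem:GromovPrecompactness} and \cref{prop:SemicontinuitaAmbrosioBrueSemola} for the limit sets $Z_j$, and \eqref{eq:InfinityInequalityIsopProfile} to close. Your closing argument, however, is genuinely different and in fact leaner. The paper first proves $P(\Omega_i^v)\to0$ for the residual $\Omega_i^v=\Omega_i^d\setminus\bigcup_j B_{T_{i,j}}(p_{i,j})$ by an explicit competitor (replace $\Omega_i^v$ by a small ball of the same vanishing volume), and then proves isoperimetricity of $Z_j$ together with $\lim_i P(\Omega_i^d\cap B_{T_{i,j}}(p_{i,j}))=P_{X_j}(Z_j)$ by a second competitor construction that pulls a better set $W_j\subset X_j$ back to $M^n$ via item (c) of \cref{prop:SemicontinuitaAmbrosioBrueSemola} and invokes the continuity of $I$ from \cite{FloresNardulli20}; only afterwards does it apply \eqref{eq:InfinityInequalityIsopProfile}. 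Your single chain of inequalities --- provided the third inequality is justified only by the one-sided bound $\sum_jP(\Omega_i^d\cap B_{T_{i,j}}(p_{i,j}))\le P(\Omega_i^d)+o(1)$ coming from the slicing, and not by the full item (ii), which is not yet available at that stage --- forces equality throughout; termwise comparison of $I_{X_j}(V_j)\le P_{X_j}(Z_j)$ then yields isoperimetricity of each $Z_j$, and the saturation of the Fatou-type inequality recovers $\lim_iP(\Omega_i^v)=0$, item (ii), and the perimeter convergences in (iii) as byproducts. This bypasses both competitor arguments and the continuity of the profile, which is a real simplification.

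The one genuine gap is the volume identity $V=\vol(\Omega)+\sum_j\meas_j(Z_j)$, which your chain needs merely to invoke \eqref{eq:InfinityInequalityIsopProfile} and which you attribute to ``the coarea slicing eliminating mass shaved off by the cuts''. That is not where the difficulty lies: the spheres are null sets, and the real issues are (a) showing that no fixed positive amount of volume survives in the residual $\Omega_i^d\setminus\bigcup_{j\le N_i}B_{R_j}(p_{i,j})$ after all the extractions, and (b) interchanging $\lim_i$ with $\sum_j$ when $\overline N=+\infty$. Both require a further run of \cref{lem:MasLowBound} combined with the maximality property $\vol(E_i\cap B_R(p_i))\ge\vol(E_i\cap B_R(q))$ built into the dichotomy alternative of \cref{lem:CoCo}: a persistent residual of mass $\omega>0$ with uniformly bounded perimeter would force every extracted piece to carry at least a fixed mass $C_\omega>0$, contradicting $\sum_j w_j'\le W<+\infty$. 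The same device is needed to prove $\dist(p_{i,j},p_{i,k})\to+\infty$, which is not automatic from the dichotomy (that alternative only separates the leftover set $U_i$ from the current concentration point, not the next concentration point from the earlier ones). These are exactly the paper's Steps 2--3, and without them your chain cannot be launched.
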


Let us mention the following useful consequence.

\begin{remark}\label{rem:ProfiloPositivo}
From \cref{thm:MassDecomposition} we deduce that if $(M^n,g)$ is noncollapsed with $\ric \ge (n-1) k$, then $I(V)>0$ for any $V>0$.

Indeed, fix $V>0$ and consider a perimeter-minimizing sequence of bounded sets $\Omega_i$ of volume $V$, so that we can apply \cref{thm:MassDecomposition}. Both on $M$ and on any pmGH-limit $X_j$ that may appear in (iii) in \cref{thm:MassDecomposition} there holds a weak local Poincar\'{e} inequality on balls, with constant depending only on the radius of the chosen ball and on the lower bound $k$ assumed on the Ricci curvature, see \cite[Theorem 1]{Rajala12}. As such inequality implies by approximation a relative isoperimetric inequality, we deduce that any set of finite perimeter with finite positive measure must have strictly positive perimeter. Therefore the first identity in \eqref{eq:Itemiv2} implies that $I(V)>0$.
\end{remark}

We now prove the asymptotic mass decomposition theorem.

\begin{proof}[Proof of \cref{thm:MassDecomposition}]
We divide the proof in several steps.

\begin{itemize}
    \item[Step 1.] Up to passing to a subsequence in $i$, we claim that for any $i$ there exist an increasing sequence of natural numbers $N_i\ge1$ with limit $\overline{N}\eqdef \lim_i N_i \in \N\cup\{+\infty\}$, points $p_{i,1},\ldots, p_{i,N_i} \in M^n$ for any $i$, radii $R_j\ge1$ and numbers $\eta_j\in(0,1]$ defined for $j<\overline{N}$, and, if $\overline{N}<+\infty$, also a sequence of radii $R_{i,\overline{N}}\ge 1$, such that
    \begin{equation}\label{eq:Step1}
        \begin{split}
        \lim_i \dist(p_{i,j},q)=+\infty & \qquad \forall\,q\in M,\forall\,j<\overline N+1  ,\\
             \lim_i \dist(p_{i,j},p_{i,k}) =+\infty& \qquad\forall\,j\neq k<\overline N+1  ,\\
             \dist(p_{i,j},p_{i,k})\ge R_j+R_k+2& \qquad 
              \forall\,i\in \N, \forall\, j\neq  k\leq N_i, \\
              &\qquad j\neq \overline{N}, k \neq \overline{N} ,\\
            \exists \, \lim_i \vol(\Omega_i^d \cap B_{R_j}(p_{i,j})) = w_j' >0  & \qquad \forall\, j<\overline N,\\
             \mathcal{H}^{n-1}(\partial^* \Omega_i^d \cap \partial B_{R_j}(p_{i,j}) ) = 0& \qquad \forall\,i, \forall\, j\le N_i , j \neq\overline N , \\
            \mathcal{H}^{n-1}(\Omega_i^d \cap \partial B_{R_j}(p_{i,j}) ) \le \frac{\eta_j}{2^j} & \qquad \forall\,i, \forall\, j\le N_i , j \neq\overline N, \\
            \text{if also $\overline{N}=+\infty$, then }  W\ge  \lim_i  \sum_{j=1}^{N_i} \vol( \Omega_i^d \cap  B_{R_j}(p_{i,j}) ) & \qquad \forall\, i,\\
            \text{if instead $\overline{N}<+\infty$, then $\lim_i R_{i,\overline{N}} = + \infty$,}&\\ 
             \mathcal{H}^{n-1}(\partial^* \Omega_i^d \cap \partial B_{R_{i,\overline{N}}}(p_{i,\overline N}) ) = 0 & \qquad \forall\,i \st N_i=\overline{N}, \\
            \mathcal{H}^{n-1}(\Omega_i^d \cap \partial B_{R_{i,\overline N}}(p_{i,\overline N}) ) \le \frac{1}{2^{\overline N}} & \qquad \forall\,i \st N_i=\overline{N},\\
            \dist\left(\partial B_{R_{i,\overline{N}}}(p_{i,\overline{N}}) , \partial B_{R_{j}}(p_{i,j}) \right) > 2& \qquad \forall\,i\st N_i=\overline{N}, \forall \,j\neq\overline{N},\\
            W = \lim_i \vol\left(  B_{R_{i,\overline{N}}}(p_{i,\overline{N}}) \cap \left( \Omega_i^d  \setminus \bigcup_{j=1}^{\overline{N}-1} B_{R_j}(p_{i,j}) \right)\right) & + \sum_{j=1}^{\overline{N}-1} \vol(  \Omega_i^d \cap  B_{R_j}(p_{i,j})) .\\
        \end{split}
    \end{equation}
    
   We first briefly explain how the proof of this step proceeds.
    We are going to produce the claimed points and radii by induction, with respect to $j$, applying \cref{lem:CoCo}. We will prove that each time we apply \cref{lem:CoCo} on some set during the proof of this step, we will never end up in \cref{it:COCOvanishing}. As a first step, we shall apply \cref{lem:CoCo} on $E_i= \Omega_i^d$. If \cref{it:COCOCompactness} occurs, then we will show that $\overline{N}=1=N_i$ for any $i$; indeed, \cref{it:COCOCompactness} yields a sequence of points $p_{i,1}$ and a diverging sequence of radii $R_{i,1}$ such that all the mass $W$ eventually concentrates in the sequence of balls $B_{R_{i,1}}(p_{i,1})$. Moreover $p_{i,1}$ diverges at infinity (as $\Omega_i^d$ does), we do not construct other sequences of points, and all the identities in \eqref{eq:Step1} can be realized by appropriately choosing $R_{i,1}$. If instead \cref{it:COCODicotomia} occurs, then \cref{it:COCODicotomia} yields the first sequence of points $p_{i,1}$ and a radius $R_1$ such that a certain amount $w_1'>0$ of mass eventually concentrates in the balls $B_{R_{1}}(p_{i,1})$. Moreover points $p_{i,1}$ diverge at infinity. Now, in this case, we will iterate the construction by applying \cref{lem:CoCo} to the sequence $\Omega_i^d \setminus B_{R_1}(p_{i,1})$. As anticipated, we will see that \cref{it:COCOvanishing} does not occur. If \cref{it:COCOCompactness} occurs, then \emph{for large} $i$ we find a second sequence of points $p_{i,2}$ and diverging radii $R_{i,2}$ such that all the remaining mass eventually concentrates in the sequence of balls $B_{R_{i,2}}(p_{i,2})$. In this case $\overline{N}=2=N_i$ for those $i$ such that $p_{i,2}$ are defined. Moreover, the accurate choice of the radii eventually realizes the relations in \eqref{eq:Step1}.  If instead \cref{it:COCODicotomia} occurs again, then \cref{it:COCODicotomia} yields a second sequence of points $p_{i,2}$ defined \emph{for large} $i$ and a radius $R_2$ such that a new amount $w_2'>0$ of mass eventually concentrates in the balls $B_{R_{2}}(p_{i,2})$. Also the radius $R_2$ can be chosen so that, in relation to the already constructed balls $B_{R_1}(p_{i,1})$, the identities in \eqref{eq:Step1} will be eventually satisfied. In this latter case we iterate the construction again by applying \cref{lem:CoCo} on $\Omega_i^d \setminus \cup_{j=1}^2 B_{R_j}(p_{i,j})$ and so on. Each time we apply \cref{lem:CoCo}, we make sure that the newly constructed sequences of points and radii satisfy the relations prescribed in \eqref{eq:Step1} in relation to the already defined sequences of balls.
    As described above, if \cref{it:COCOCompactness} occurs at some iteration, then the construction stops and $\overline{N}$ equals the number of times the iterations occurred. If instead each application of \cref{lem:CoCo} leads to \cref{it:COCODicotomia}, then $\overline{N}=+\infty$, the construction is iterated infinitely many times, and we end up with countably many sequences of points $p_{i,j}$ and radii $R_j$. These sequences will eventually satisfy \eqref{eq:Step1} because at any iteration the new sequences of points and the new radii are ``coherent'' with the previously constructed, i.e., they satisfy the relations in \eqref{eq:Step1} in relation to the already constructed balls.
    
    Observe that, for given $j\in\mathbb N$, the first index $i$ such that $p_{i,j}$ is defined (if it exists) depends on choosing a large index given by \cref{lem:CoCo} depending on a chosen threshold $\eps$; therefore the sequence $N_i$ is inductively constructed together with the appearance of the sequences $p_{i,j},R_j$.
    
    Now, we can move to the proof. As the first step ($j=1$), we apply \cref{lem:CoCo} on $E_i= \Omega_i^d$. Since $W>0$ and, from item (i) of \cref{thm:RitoreRosales} there exists a constant $C_1>0$ such that $P(\Omega_i^d) \le C_1$, \cref{lem:MasLowBound} implies that \cref{it:COCOvanishing} in \cref{lem:CoCo} does not occur. Indeed by \cref{lem:MasLowBound} we find a sequence $q_i\in M^n$ such that
    \[
    \vol( \Omega_i^d \cap B_1(q_i) ) \ge \min \left\{ \frac{C_{n,k,v_0}}{C_1^n}\left(\frac{W}{2}\right)^n , \frac{v_0}{2}\right\},
    \]
    for any large $i$ such that $\vol(\Omega_i^d) \ge W/2$. Such an estimate would contradict the occurrence of \cref{it:COCOvanishing}.
    
    So suppose that \cref{it:COCODicotomia} in \cref{lem:CoCo} occurs. Then denote by $w_1:=w\in(0,W)$ the number given by \cref{it:COCODicotomia}. Then take
    \begin{equation*}
        \alpha_1\eqdef \min\left\{ \frac{C_{n,k,v_0}}{\overline{C}^n}\left(\frac{W-w_1}{2} \right)^n, \frac{v_0}{2} \right\}, \qquad 
        \eps_1 < \frac13\frac{\eta_1}{2^2}\eqdef \frac13 \frac{1}{2^2}\min \left\{1, \alpha_1 , \frac{w_1}{2} \right\},
    \end{equation*}
    where $C_{n,k,v_0}$ is as in \cref{lem:MasLowBound} and $\overline{C} \eqdef C_1+ 2 \ge P(\Omega_i^d) + 2$ for any $i$. Hence let $p_{i,1},R^*_1\ge 1$ be given by \cref{it:COCODicotomia} applied with $\eps=\eps_1$.  Take $R_1\ge R^*_1$ such that $\partial B_{R_1}(p_{i,1})$ is Lipschitz and $\mathcal{H}^{n-1}(\partial^* \Omega_i^d \cap \partial B_{R_1}(p_{i,1}) ) = 0$ for any $i$. Moreover, up to subsequence, we have that there exists $ \lim_i \vol(\Omega_i^d \cap B_{R_1}(p_{i,1})) =: w_1' \in (0,W)$. Also, since $\Omega_i^d\cap \mathcal{C}=\emptyset$ definitely for any compact set $\mathcal{C}$, then $\dist(p_{i,1},q)\to+\infty$ for any fixed $q\in M^n$.\\
    Finally, by \cref{it:COCODicotomia} and since $\Omega_i^d$ is bounded, there is a sequence of open sets $V_i^1$ such that $\dist(B_{R_1}(p_{i,1}),V_i^1)\to+\infty$ and
    \begin{equation}\label{eq:MassaRestante}
    \vol(\Omega_i^d) - \vol(\Omega_i^d \cap B_{R_1}(p_{i,1})) - \vol (\Omega_i^d \cap V_i^1) < 3\eps_1 < \eta_1/2^2,
    \end{equation}
    for $i$ sufficiently large. So, for large $i$, by the coarea formula we can estimate
    \[
    \begin{split}
        \frac{\eta_1}{2^2} > \int_{R_1}^{\dist({p_{i,1}}, V_i^1)} \mathcal{H}^{n-1} (\Omega_i^d \cap \partial B_t (p_{i,1})) \de t > \int_{R_1}^{R_1+1} \mathcal{H}^{n-1} (\Omega_i^d \cap \partial B_t (p_{i,1})) \de t.
    \end{split}
    \]
    Therefore, up to taking a new radius in $(R_1,R_1+1)$, still denoted by $R_1$, we can further ensure that
    \[
    \mathcal{H}^{n-1}(\Omega_i^d \cap \partial B_{R_1}(p_{i,1}) ) \le \frac{\eta_1}{2}.
    \]
    
    If instead \cref{it:COCOCompactness} in \cref{lem:CoCo} occurs, then we take $\overline{N}=1=N_i$ for any $i$ as \cref{it:COCOCompactness} yields sequences $p_{i,1}, R_{i,1}$ such that $\vol( \Omega_i^d \cap B_{R_{i,1}}(p_{i,1}) ) \ge W - 1/i$, up to subsequence. Indeed, arguing as above, also in this case we can ensure all the remaining properties in \eqref{eq:Step1} and we can also take $R_{i,1}\to + \infty$ as $i\to+\infty$.
    
    So we have seen that in case for $j=1$ the alternative in \cref{it:COCODicotomia} occurs, the construction must be iterated. We now show the inductive construction only for the step $j=2$, the passage $j \Rightarrow j+1$ being completely analogous. For $j=2$ we now apply \cref{lem:CoCo} on $E_i= \Omega_i^d \setminus B_{R_1}(p_{i,1})$. Again, since $\vol(\Omega_i^d \setminus B_{R_1}(p_{i,1})) \to W-w_1' >0 $, \cref{it:COCOvanishing} in \cref{lem:CoCo} does not occur because of \cref{lem:MasLowBound}. Indeed, just like we did for $j=1$, a positive lower bound on the volume and the finite upper bound on the perimeter given by $P(\Omega_i^d \setminus B_{R_1}(p_{i,1}))\le P(\Omega_i^d) + \mathcal{H}^{n-1}(\Omega_i^d \cap \partial B_{R_1}(p_{i,1})) \le P(\Omega_i^d) + \eta_1/2 \le \overline{C} $ imply that \cref{it:COCOvanishing} would contradict \cref{lem:MasLowBound}.
    
    So if \cref{it:COCODicotomia} in \cref{lem:CoCo} occurs, then denote by $w_2:=w\in(0,W-w_1')$ the number given by \cref{it:COCODicotomia}. In this case we take
    \begin{equation*}
    \begin{split}
        \alpha_2\eqdef \min\left\{ \frac{C_{n,k,v_0}}{\overline{C}^n}\left(\frac{W-w_1'-w_2}{2} \right)^n, \frac{v_0}{2} \right\}, \qquad \eps_2 < \frac13\frac{\eta_2}{2^3} \eqdef \frac13 \frac{1}{2^3}\min \left\{1, \alpha_2 , \frac{w_2}{2} \right\},&
    \end{split}
    \end{equation*}
    Hence \cref{it:COCODicotomia} gives sequences $p_{i,2}, R^*_2 \ge 1$. As before, let $R_2\ge R^*_2$ such that, up to passing to a subsequence, we have that
    $\partial B_{R_2}(p_{i,2})$ is Lipschitz, $\mathcal{H}^{n-1}(\partial^* \Omega_i^d \cap \partial B_{R_2}(p_{i,2}) ) = 0$ for any $i$, there exists $\lim_i \vol(\left(\Omega_i^d \setminus B_{R_1}(p_{i,1})\right) \cap B_{R_2}(p_{i,2})) = w_2' >0$, and we have that $\dist(p_{i,2},q)\to+\infty$ for any $q \in M^n$. Moreover, the use of the coarea formula as done above now yields
    \[
     \mathcal{H}^{n-1}(\left(\Omega_i^d\setminus B_{R_1}(p_{i,1})\right) \cap \partial B_{R_2}(p_{i,2}) ) \le \frac{\eta_2}{2^2}.
    \]
    We now show that $\dist(p_{i,1},p_{i,2})\to + \infty$, and then, up to subsequence, we can also assume that $\dist(p_{i,1},p_{i,2})\ge R_1+R_2+2$ for any $i$ such that $p_{i,2}$ is defined. Indeed, if $\limsup_i\dist(p_{i,1},p_{i,2})$ is bounded, then
    \[
    \vol(\left(\Omega_i^d\setminus B_{R_1}(p_{i,1})\right) \cap  B_{R_2}(p_{i,2})) \le \vol ( \Omega_i^d\setminus \left( B_{R_1}(p_{i,1}) \cup V_i^1\right)) \le \frac{\eta_1}{2^2},
    \]
    for large $i$.
    
    On the other hand we know that, for large $i$, $\vol (\Omega_i^d \setminus B_{R_1}(p_{i,1}) )\ge W-w_1' + o(1) \ge (W-w_1)/2$, and $P(\Omega_i^d \setminus B_{R_1}(p_{i,1}))\le  \overline{C} $, for large $i$.
    Therefore, using the characterization of $p_{i,2}$ in \cref{it:COCODicotomia} and applying \cref{lem:MasLowBound} on $\Omega_i^d \setminus B_{R_1}(p_{i,1})$, we get for some $q_i\in M^n$ that
    \[
    \begin{split}
        \frac{\eta_1}{2^2} &\ge \vol (\left(\Omega_i^d\setminus B_{R_1}(p_{i,1})\right) \cap  B_{R_2}(p_{i,2}))
        \ge \vol (\left(\Omega_i^d\setminus B_{R_1}(p_{i,1})\right) \cap  B_{R^*_2}(p_{i,2}))\\
        &\ge \vol (\left(\Omega_i^d\setminus B_{R_1}(p_{i,1})\right) \cap  B_{R^*_2}(q_i)) 
        \ge \vol (\left(\Omega_i^d\setminus B_{R_1}(p_{i,1})\right) \cap  B_{1}(q_i)) \\&\ge \min\left\{C_{n,k,v_0}\frac{\vol(\Omega_i^d\setminus B_{R_1}(p_{i,1}))^n}{P(\Omega_i^d\setminus B_{R_1}(p_{i,1}))^n} , \frac{v_0}{2}  \right\} \ge \alpha_1,
    \end{split}
    \]
    for large $i$.
    But since $\eta_1\le \alpha_1$, the above inequality yields a contradiction.\\
    Now since $\dist(p_{i,1},p_{i,2})\to_{i} + \infty$, the above identities simplify into
    \[
    w_2' = \lim_i \vol(\Omega_i \cap B_{R_2}(p_{i,2}) ) , \qquad
    \mathcal{H}^{n-1}(\Omega_i^d \cap \partial B_{R_2}(p_{i,2})) \le \frac{\eta_2}{2^2},
    \]
    up to passing to a subsequence; also, by \cref{it:COCODicotomia}, analogously as in \eqref{eq:MassaRestante} we obtain
    \[
    \begin{split}
    \vol&(\Omega_i^d) - \vol(\Omega_i^d \cap B_{R_1}(p_{i,1})) - \vol(\Omega_i^d \cap B_{R_2}(p_{i,2})) - \vol( \left( \Omega_i^d \setminus B_{R_1}(p_{i,1}) \right) \cap V^2_i)\\
    &
    = \vol(\Omega_i^d \setminus B_{R_1}(p_{i,1}) ) - \vol((\Omega_i^d \setminus B_{R_1}(p_{i,1}))\cap B_{R_2}(p_{i,2})) - \vol( \left( \Omega_i^d \setminus B_{R_1}(p_{i,1}) \right) \cap V^2_i)
    \\
    &\le 3 \eps_2 = \frac{\eta_2}{2^3}, 
    \end{split}
    \]
    for any large $i$ such that $p_{i,2}$ is defined, for a sequence of bounded open sets $V^2_i$ such that $\dist(p_{i,2},V^2_i)\to+\infty$. At this point, the new sequence $p_{i,2}$ and the radii $R_2$ satisfy the conditions prescribed in \eqref{eq:Step1} in relation to the already constructed sequence of balls $B_{R_1}(p_{i,1})$. 
    
    Finally, if instead \cref{it:COCOCompactness} in \cref{lem:CoCo} occurs for $j=2$,
    then \cref{it:COCOCompactness} yields sequences $p_{i,2}, R_{i,2}\ge1$ such that $\vol( B_{R_{i,2}}(p_{i,2}) \cap \left(\Omega_i^d \setminus B_{R_{1}}(p_{i,1})\right)) \ge W - w_1' - 1/i$, up to subsequence for large $i$. Since \cref{it:COCOCompactness} also gives $r\ge 1$ such that $\vol( B_{r}(p_{i,2}) \cap \left(\Omega_i^d \setminus B_{R_{1}}(p_{i,1})\right) ) \ge \vol( B_{r}(q) \cap \left(\Omega_i^d \setminus B_{R_{1}}(p_{i,1}) \right)) $ for any $q \in M^n$, arguing as above one easily gets that $\dist (p_{i,1},p_{i,2})\to +\infty$. Hence $\overline{N}=2=N_i$ for large $i$. Moreover, arguing as in the above step $j=1$, also in this case we can ensure all the remaining properties in \eqref{eq:Step1}, we can also take $R_{i,2}\to + \infty$ as $i\to+\infty$, and assume that $\dist( \partial B_{R_{i,2}}(p_{i,2}) , \partial B_{R_{1}}(p_{i,1}) ) > 2 $ for large $i$.
    
    Now if for $j=2$ \cref{it:COCODicotomia} occurs, one needs to continue the construction for $j=3$. Now one applies \cref{lem:CoCo} on $E_i= \Omega_i^d \setminus (B_{R_1}(p_{i,1}) \cup B_{R_2}(p_{i,2}))$. Once again  \cref{it:COCOvanishing} cannot occur, because of \cref{lem:MasLowBound} and since $\vol (\Omega_i^d \setminus (B_{R_1}(p_{i,1}) \cup B_{R_2}(p_{i,2}))) \to W-w_1'-w_2' >0$. Then it can be checked that the construction inductively proceeds depending on whether \cref{it:COCOCompactness} or \cref{it:COCODicotomia} occurs for $j=3$ as discussed above for $j=2$. Eventually one gets the desired sequences $N_i, p_{i,j}, R_j, \eta_j$ as claimed in \eqref{eq:Step1}.

    \item[Step 2.] We claim that if $\overline{N}=+\infty$ then
    \begin{equation}\label{eq:Step2a}
        W= \lim_i \sum_{j=1}^{N_i} \vol( \Omega_i^d \cap  B_{R_j}(p_{i,j}) ).
    \end{equation}
    Moreover, we claim that, up to passing to a subsequence in $i$, there exist sequences of radii $\{T_{i,j}\}_{i \in \N}$ such that $T_{i,j} \in (R_j,R_j+1)$ for any $j<\overline{N}$, and $T_{i,\overline{N}} \in (R_{i,\overline{N}}, R_{i,\overline{N}}+1)$ if $\overline{N}<+\infty$, such that \eqref{eq:AsympMassDecomp1} holds and
    \begin{equation}\label{eq:Step2b}
        \lim_i \sum_{j=1}^{N_i} \mathcal{H}^{n-1}(\Omega_i^d \cap \partial B_{T_{i,j}}(p_{i,j}) ) = 0.
    \end{equation}
    
    Assume first that $\overline{N}=+\infty$. We observe that, up to passing to a subsequence in $i$, we have
    \[
    W \ge \lim_i \sum_{j=1}^{N_i} \vol( \Omega_i^d \cap  B_{R_j}(p_{i,j}) ) \ge \sum_{j=1}^{M} w'_j,
    \]
    for any $M \in \N$, and then $W\ge \sum_{j=1}^{+\infty} w'_j$.
    Suppose by contradiction that $W>  \lim_i \sum_{j=1}^{N_i} \vol( \Omega_i^d \cap  B_{R_j}(p_{i,j}) ) $, and define
    \[
    \widetilde\Omega_i^v \eqdef \Omega_i^d \setminus \bigcup_{j=1}^{N_i}   B_{R_j}(p_{i,j}).
    \]
    By the absurd hypothesis, up to passing to a subsequence, we have that $\lim_i \vol(\widetilde\Omega_i^v) = \omega>0$ and, by \eqref{eq:Step1}, we estimate
    \[
    \begin{split}
        P(\widetilde\Omega_i^v) & \le P(\Omega_i^d) + \sum_{j=1}^{N_i} \mathcal{H}^{n-1}(\Omega_i^d \cap  \partial B_{R_j}(p_{i,j}) ) \le \overline{C}.
    \end{split}
    \]
    On the other hand, applying \cref{lem:MasLowBound} on $\widetilde\Omega_i^v$ yields
    \[
    \vol(\widetilde\Omega_i^v \cap B_1(q_i))
    \ge \min\left\{C_{n,k,v_0} \frac{\vol(\widetilde\Omega_i^v)^n}{ P(\widetilde\Omega_i^v)^n} ,\frac{v_0}{2}\right\}
    \ge \min \left\{C_{n,k,v_0} \frac{(\omega/2)^n}{ \overline{C}^n} ,\frac{v_0}{2}\right\} \eqqcolon C_\omega,
    \]
    for some $q_i \in M^n$, for large $i$. Hence for large $i$ and for any fixed $j_0\le N_i$ we then have
    \[
    \begin{split}
        C_\omega 
        &\le \vol(\widetilde\Omega_i^v \cap B_1(q_i))
        \le \vol \left( B_1(q_i) \cap \, \Omega_i^d \setminus \bigcup_{j=1}^{j_0-1}  B_{R_j}(p_{i,j})  \right) \\
        &
        \le \vol \left( B_{R^*_{j_0}}(q_i) \cap \, \Omega_i^d \setminus \bigcup_{j=1}^{j_0-1}  B_{R_j}(p_{i,j})  \right)
        \\
        &
        \le \vol \left( B_{R^*_{j_0}}(p_{i,j_0}) \cap \, \Omega_i^d \setminus \bigcup_{j=1}^{j_0-1}  B_{R_j}(p_{i,j})  \right)
        \\
        & \le \vol(\Omega_i^d \cap B_{R_{j_0}}(p_{i,j_0})),
    \end{split}
    \]
    where $R^*_{j_0}\le R_{j_0}$ was determined by the application of \cref{it:COCODicotomia} in the Step 1.
    Since $N_i\to +\infty$, then from the estimate above we would get $+\infty =\sum_{j=1}^{+\infty} w'_j \le W$, that gives a contradiction. Hence \eqref{eq:Step2a} is proved.
    
    Now we prove \eqref{eq:Step2b}. Assume first that $\overline{N}=+\infty$. Then in the above notation, using \eqref{eq:Step1}, in particular the fact that $\dist(p_{i,j},p_{i,k})\geq R_j+R_k+2$, and the coarea formula, we estimate
    \begin{equation}\label{eq:Step2c}
    \begin{split}
        \vol(\widetilde\Omega_i^v) 
        & \ge \sum_{j=1}^{N_i} \int_{R_j}^{R_j+1} \mathcal{H}^{n-1}(\Omega_i^d \cap \partial B_t(p_{i,j})) \de t \ge \frac12  \sum_{j=1}^{N_i}  \mathcal{H}^{n-1}(\Omega_i^d \cap \partial B_{T_{i,j}}(p_{i,j})),
    \end{split}
    \end{equation}
    for some $T_{i,j}\in (R_j,R_j+1)$ for any $j$. Up to subsequence (in $i$) we have that $T_{i,j}\to T_j$ for any $j$, and since $\vol(\widetilde\Omega_i^v) \to 0$ by \eqref{eq:Step2a}, then \eqref{eq:Step2b} follows together with the properties stated in \eqref{eq:AsympMassDecomp1}. If instead $\overline{N}<+\infty$, since
    \[
    \dist\left(\partial B_{R_{i,\overline{N}}}(p_{i,\overline{N}}) , \partial B_{R_{j}}(p_{i,j}) \right) > 2 \qquad \forall\,i\st N_i=\overline{N}, \forall \,j<\overline{N},
    \]
    by \eqref{eq:Step1}, letting now $\widehat\Omega_i^v\eqdef \Omega_i^d \setminus \left(\bigcup_{j=1}^{\overline{N}-1} B_{R_j}(p_{i,j}) \cup B_{R_{i,\overline{N}}}(p_{i,\overline{N}})\right)$, as $\vol(\widehat \Omega_i^v)\to 0$ by the last line in \eqref{eq:Step1}, we can perform an analogous estimate as in \eqref{eq:Step2c}, therefore getting the desired $T_{i,j}$ for any $j\le\overline{N}$ satisfying \eqref{eq:Step2b}. Hence \eqref{eq:AsympMassDecomp1} holds also in this case.

    \item[Step 3.] We claim that letting $\Omega_i^v \eqdef \Omega_i^d \setminus \bigcup_{j=1}^{N_i} \left(\Omega_i^d \cap B_{T_{i,j}}(p_{i,j})\right)$, then
    \begin{equation}\label{eq:Step3a}
            \lim_i \vol(\Omega_i^v)=0,
    \end{equation}
    and that, if $\overline{N}=+\infty$, then
    \begin{equation}\label{eq:Step3b}
            W= \lim_i \sum_{j=1}^{N_i} \vol( \Omega_i^d \cap  B_{T_{i,j}}(p_{i,j}) ) = \sum_{j=1}^{+\infty} \lim_i \vol( \Omega_i^d \cap  B_{T_{i,j}}(p_{i,j}) ).
    \end{equation}
    
    Since $T_{i,j}\ge R_j$ for any $j<\overline{N}$, we have that, if $\overline{N}=+\infty$, then $\Omega_i^v \subset \widetilde\Omega_i^v$, while if $\overline{N}<+\infty$, then analogously $\Omega_i^v \subset \widehat\Omega_i^v$. Hence in any case \eqref{eq:Step3a} follows from \eqref{eq:Step2a}, if $\overline N=+\infty$, or from the last line in \eqref{eq:Step1}, if $\overline N<+\infty$.
    
    Now suppose that $\overline{N}=+\infty$. Since $T_{i,j}\ge R_j$ for any $j$, by \eqref{eq:Step2a} we see that
    \[
     W= \lim_i \sum_{j=1}^{N_i} \vol( \Omega_i^d \cap  B_{R_j}(p_{i,j}) ) \le \lim_i \sum_{j=1}^{N_i} \vol( \Omega_i^d \cap  B_{T_{i,j}}(p_{i,j}) ) \le W.
    \]
    Up to subsequence, denote $\omega_j\eqdef \lim_i \vol( \Omega_i^d \cap  B_{T_{i,j}}(p_{i,j}) )$ for any $j$. By the above identity, we see that $W \ge \sum_{j=1}^{+\infty} \omega_j$, and then $\lim_j \omega_j =0$. In order to prove the second part of \eqref{eq:Step3b}, suppose by contradiction that $\sum_{j=1}^{+\infty} \omega_j = Y <W$. We argue as before considering
    \[
    C^*\eqdef \min \left\{ \frac{C_{n,k,v_0}}{\overline{C}^n} \left( \frac{W-Y}{2}\right)^n , \frac{v_0}{2}\right\}.
    \]
    Let $j^*$ be such that $\omega_j< C^*$ for any $j\ge j^*$. From now on consider $j>j^*$. We clearly have
    \[
    \vol\left(\Omega_i^d \setminus \bigcup_{k=1}^{j-1} B_{R_k}(p_{i,k})  \right) \ge \frac{W-Y}{2},
    \]
    for any large $i$. Moreover
    \[
    P \left(\Omega_i^d \setminus \bigcup_{k=1}^{j-1} B_{R_k}(p_{i,k})  \right) \le P(\Omega_i^d) + \sum_{k=1}^{j-1} \mathcal{H}^{n-1}(\Omega_i^d \cap \partial B_{R_k}(p_{i,k}) ) \le \overline{C},
    \]
    by \eqref{eq:Step1}. On the other hand, applying  \cref{lem:MasLowBound} on $\Omega_i^d \setminus \bigcup_{k=1}^{j-1} B_{R_k}(p_{i,k})$ yields the existence of $q_i\in M^n$ such that
    \[
    \vol \left(B_1(q_i) \cap \,\Omega_i^d \setminus \bigcup_{k=1}^{j-1} B_{R_k}(p_{i,k})  \right) \ge C^*,
    \]
    for any large $i$. As $p_{i,j}$ is obtained by applying \cref{it:COCODicotomia} on $\Omega_i^d \setminus \bigcup_{k=1}^{j-1} B_{R_k}(p_{i,k}) $ and all the produced balls are disjoint, this implies that
    \[
    \vol(\Omega_i^d \cap B_{R_j}(p_{i,j}) ) \ge C^*,
    \]
    for any $j>j^*$ and any $i$ large. Hence $\omega_j\ge C^*$ for any $j>j^*$, and $\sum_{j=1}^{+\infty}\omega_j = +\infty$, yielding a contradiction.

    \item[Step 4.] We claim that
    \begin{equation}\label{eq:Step4a}
        \lim_i P(\Omega_i^v)=0,
    \end{equation}
    and, denoting $ G_i\eqdef B_{T_{i,\overline{N}}}(p_{i,\overline{N}}) \cap \Omega_i^d  \setminus \bigcup_{j=1}^{\overline{N}-1} B_{T_{i,j}}(p_{i,j}) $ if $\overline{N}<+\infty$, that
    \begin{equation}\label{eq:Step4b}
        \lim_i P(\Omega_i^d) = \begin{cases}
        \lim_i \left(P(G_i) +\sum_{j=1}^{\overline{N}-1} P( \Omega_i^d \cap B_{T_{i,j}}(p_{i,j}) )\right)
        & \overline{N}<+\infty,\\
        \lim_i \sum_{j=1}^{N_i}  P( \Omega_i^d \cap B_{T_{i,j}}(p_{i,j}) )
        & \overline{N}=+\infty,
        \end{cases}
    \end{equation}
    We also claim that item (iii) of the statement holds.
    
    In order to prove \eqref{eq:Step4a}, we assume without loss of generality that $\vol(\Omega_i^v)>0$. We assume first that $\overline{N}=+\infty$. By \eqref{eq:Step2b} we have that
    \begin{equation}\label{eq:Step4c}
    \lim_i P(\Omega_i^d) = \lim_i\left( P(\Omega_i^v) + \sum_{j=1}^{N_i} P(\Omega_i^d \cap B_{T_{i,j}}(p_{i,j}) )\right).
    \end{equation}
    If, by contradiction, $\lim_i P(\Omega_i^v)>0$, then we consider the new sequence
    \[
    F_i = \Omega_i^c \cup B_{\rho_i}(q_i) \cup \bigcup_{j=1}^{N_i} \Omega_i^d \cap B_{T_{i,j}}(p_{i,j}) ,
    \]
    where $B_{\rho_i}(q_i)$ is a ball such that $\vol(  B_{\rho_i}(q_i)) = \vol(\Omega_i^v)$ and $ B_{\rho_i}(q_i) \cap \Omega_i = \emptyset$. Observe that such a ball exists since $\Omega_i$ is bounded and $\vol(\Omega_i^v)\to 0$ by \eqref{eq:Step3a}, hence $\rho_i<1$ for large $i$. Actually $\rho_i\to 0$, indeed \cref{thm:BishopGromov} implies that
    \[
    \vol(B_r(q)) \ge v(n,k,r) \frac{\vol(B_1(q))}{v(n,k,1)} \ge \frac{v_0}{v(n,k,1)} v(n,k,r),
    \]
    for any $r\in(0,1)$. Hence $v(n,k,\rho_i)\to 0$ and hence $\rho_i\to 0$. Moreover by \cref{thm:BishopGromov} (together with \cref{rem:PerimeterMMS2}) we have
    \begin{equation}\label{eq:Step4d}
        P(B_{\rho_i}(q_i)) \le s(n,k,\rho_i) \xrightarrow[i]{} 0.
    \end{equation}
    Now observe that by \cref{thm:RitoreRosales} we have that
    \begin{equation*}
         \lim_i P(\Omega_i) = \lim_i \left(P(\Omega_i^c) + P(\Omega_i^d)\right) = \lim_i P(\Omega_i) + 2 \mathcal{H}^{n-1} (\partial B_{r_i}(o) \cap \Omega_i),
    \end{equation*}
    and thus $\lim_i \mathcal{H}^{n-1} (\partial B_{r_i}(o) \cap \Omega_i)=0$. Hence by definition of $F_i$ we can write
    \[
    P(F_i) = \mathcal{H}^{n-1}(\Sigma_i) + P(\Omega_i^c) + P( B_{\rho_i}(q_i) ) + \sum_{j=1}^{N_i} P (  \Omega_i^d \cap B_{T_{i,j}}(p_{i,j}) ) ,
    \]
    where $\Sigma_i \subset \partial B_{r_i}(o) \cap \Omega_i $, and thus $\lim_i \mathcal{H}^{n-1}(\Sigma_i) =0$. Therefore, by \eqref{eq:Step4c}, \eqref{eq:Step4d}, and since $\vol(F_i)= V$, the absurd hypothesis implies
    \[
    I(V) = \lim_i \left(P(\Omega_i^c) + P(\Omega_i^d)\right) > \lim_i P(F_i) \ge I(V),
    \]
    that is a contradiction. Employing the same argument, it is immediate to check that a similar reasoning implies that \eqref{eq:Step4a} holds even in case $\overline{N}<+\infty$. Indeed \eqref{eq:Step4c} still holds, $\Omega_i^d$ is bounded by assumption, and then the suitable new definition of $F_i$ leads to the same conclusion.
    
    So if $\overline{N}<+\infty$, we see that \eqref{eq:Step4a} and \eqref{eq:Step2b} imply the first line in \eqref{eq:Step4b}. If instead $\overline{N}=+\infty$, then \eqref{eq:Step4a} and \eqref{eq:Step4c} imply the second line in \eqref{eq:Step4b}.
    
    It remains to prove the claims in item (iii). By \cref{rem:GromovPrecompactness}, up to passing to a subsequence in $i$ and by a diagonal argument, we immediately have that
    for any $j < \overline{N}+1$ there exist a Ricci limit space $(X_j,\dist_j,\meas_j)$, where $\meas_j$ is the $n$-dimensional Hausdorff measure in $X_j$, which is thus an $\RCD((n-1)k,n)$ space, and points $p_j \in X_j$ such that
    \[
    (M^n,\dist,\vol,p_{i,j}) \xrightarrow[i]{} (X_j,\dist_j,\mathfrak{m}_j, p_j) \qquad \text{in the pmGH sense for any $j< \overline{N}+1$}.
    \]
    Let us deal with the case  $\overline{N}=+\infty$ first.
    
    Recalling for example from \eqref{eq:Step4b} that $P(\Omega_i^d \cap B_{T_{i,j}}(p_{i,j}))$ is uniformly bounded with respect to $i$ for any $j<\overline{N}$, we can directly apply item (a) of \cref{prop:SemicontinuitaAmbrosioBrueSemola} to get the convergence of $\Omega_i^d \cap B_{T_{i,j}}(p_{i,j})$ to some $Z_j\subset X_j$ in the $L^1$-strong sense for any $j<\overline{N}$. Moreover, again from item (a) of \cref{prop:SemicontinuitaAmbrosioBrueSemola}, we get that $\liminf_i P(\Omega_i^d \cap B_{T_{i,j}} (p_{i,j}) ) \geq P_{X_j}(Z_j)$ for every $j<\overline N$.
    
    We now check that $Z_j$ is isoperimetric for its own volume $\meas_j(Z_j)$ in $X_j$ for every $j<\overline{N}$, and that
    \begin{equation}\label{eq:ContinuityPerimeter}
        \begin{split}
    \lim_i P(\Omega_i^d \cap B_{T_{i,j}} (p_{i,j}) ) = P_{X_j}(Z_j),
    \end{split}
    \end{equation}
    for every $j< \overline{N}$.
    
    Since $M^n$ is noncollapsed, by \cref{thm:volumeconvergence} one has that, for some $v_0>0$, $\meas_j(B_1(x))\ge v_0>0$ for any $j$ and $x \in X_j$ (see the argument at the beginning of the proof of \cref{prop:ComparisonIsoperimetricProfile}). So, by \cref{lem:ProfileOnBoundedSets}, we have that if by contradiction for some $j<\overline{N}$ it occurs that either $Z_j$ is not isoperimetric or $\limsup_i P(\Omega_i^d \cap B_{T_{i,j}} (p_{i,j}) ) > P_{X_j}(Z_j)$, there exists a bounded finite perimeter set $W_j \subset X_j$ such that $\meas_j(W_j) = \meas_j (Z_j)$ and, possibly passing to subsequences in $i$, 
    \begin{equation}\label{eq:Step4e}
        \lim_i P(\Omega_i^d \cap B_{T_{i,j}} (p_{i,j}) ) \ge P_{X_j}(W_j) + \eta,
    \end{equation}
    for some $\eta>0$.
    
    By \cite[Theorem 2]{FloresNardulli20} it is known that $I$ is continuous, and thus there is $\eps_0>0$ such that
    \begin{equation}\label{eq:ContinuityProfile}
        |I(V)-I(V-\eps)|<\frac\eta2,
    \end{equation}
    whenever $|\eps|<\eps_0$.
    
    Now by item (c) in \cref{prop:SemicontinuitaAmbrosioBrueSemola}, up to subsequence, there exists a sequence of sets $E_{i,j}$ contained in $B_L(p_{i,j})$ for some $L>0$ such that $E_{i,j}$ converges in $L^1$-strong to $W_j$ and $\lim_i P(E_{i,j}) = P_{X_j}(W_j).$
    
    Moreover by \cref{thm:RitoreRosales} we know that $\Omega_i^c \to \Omega$ with $P(\Omega_i^c)\to P(\Omega)$, and $\Omega$ is an isoperimetric region on $(M^n,g)$. Hence $\Omega$ is bounded by \cref{cor:IsopBounded}. So for large $i$ there is $S>0$ such that $\Omega \Subset B_S(o) \Subset B_{r_i}(o)$, where $r_i$ is the sequence in \cref{thm:RitoreRosales},  and defining $\widetilde\Omega_i^c\eqdef \Omega_i^c \cap B_S(o)$ we have
    \[
    \vol(\widetilde\Omega_i^c) \to \vol(\Omega),
    \qquad
    P(\widetilde\Omega_i^c) \to P(\Omega).
    \]
    
    Therefore we can define a new sequence 
    \[
    H_i \eqdef \widetilde\Omega_i^c \cup E_{i,j} \cup \bigcup_{\stackrel{\ell=1}{\ell\neq j}}^K \Omega_i^d \cap B_{T_{i,\ell}}(p_{i,\ell}),
    \]
    where $K> j$ is such that, by taking into account \eqref{eq:Step3b} and the fact that $E_{i,j}$ converge in $L^1$-strong to $W_j$ that satisfies $\meas_j(W_j)=\meas_j(Z_j)=\lim_i\vol(\Omega_i^d\cap B_{T_{i,j}}(p_{i,j}))$, we have that
    \begin{equation*}
        \lim_i \left(\vol( \widetilde\Omega_i^c) + \vol( E_{i,j}) + \sum_{\stackrel{\ell=1}{\ell\neq j}}^K \vol(\Omega_i^d \cap B_{T_{i,\ell}}(p_{i,\ell}))\right) = V-\eps,
    \end{equation*}
    for some $\eps\in[0,\eps_0)$. Now since $K$ is finite, the sets whose union defines $H_i$ have diverging mutual distance, and thus $\lim_i \vol(H_i) = V-\eps$ and
    \[
\begin{split}
    \lim_i P(H_i) &= P(\Omega) + P_{X_j}(W_j) + \lim_i \sum_{\stackrel{\ell=1}{\ell\neq j}}^K P(\Omega_i^d \cap B_{T_{i,\ell}}(p_{i,\ell})) \\
    &\le P(\Omega) + P_{X_j}(W_j) + 
    %
     \lim_i \sum_{\stackrel{\ell=1}{\ell\neq j}}^{N_i} P(\Omega_i^d \cap B_{T_{i,\ell}}(p_{i,\ell})) 
   \\
    & =  
    P(\Omega) + P_{X_j}(W_j) + \lim_i \left(P(\Omega_i^d) - P(\Omega_i^d \cap B_{T_{i,j}}(p_{i,j}))\right) \\
    &\le I(V) -\eta,
\end{split}
    \]
    where in the last two lines we used \eqref{eq:Step4b}, \cref{thm:RitoreRosales}, and \eqref{eq:Step4e}. On the other hand $\lim_i P(H_i) \ge \liminf_i I(V-\eps_i)$ for some sequence $\eps_i\to \eps \in[0,\eps_0)$. Hence
    \[
    I(V) -\eta \ge \liminf_i I(V-\eps_i) = I(V-\eps) \ge I(V) -\frac\eta2,
    \]
    by continuity of $I$ and the choice of $\eps_0$ in \eqref{eq:ContinuityProfile}, that yields a contradiction. Hence if $\overline{N}=+\infty$, we completed the proof of item (iii).
    
    Finally in case $\overline{N}<+\infty$, and for indices $j<\overline N$ the proof of \eqref{eq:Itemiii1} can be performed in the very analogous way, exploiting the continuity of $I$. More precisely, also in this case the absurd hypothesis consists in \eqref{eq:Step4e} and we can define $W_j$, $E_{i,j}$, and $\widetilde\Omega_i^c$ as before. Moreover, as the $\overline{N}$-th generation of points $p_{i,\overline{N}}$ are determined in the Step 1 by the application of \cref{it:COCOCompactness} in \cref{lem:CoCo}, for any $\bar\eps>0$ we find $L'>0$ such that the newly defined sequence
    \[
    \widehat H_i\eqdef \widetilde\Omega_i^c \cup E_{i,j} \cup \, [ \Omega_i^d \cap B_{L'}(p_{i,\overline{N}}) ] \,\cup \bigcup_{\stackrel{\ell=1}{\ell\neq j}}^{\overline{N}-1} \Omega_i^d \cap B_{T_{i,\ell}}(p_{i,\ell}) 
    \]
    satisfies $\vol(\widehat H_i) \to V- \bar\eps$. Up to choosing a larger finite $L'$, the previous calculations can be still carried out, leading to the desired contradiction.
    
    It only remains to prove \eqref{eq:Itemiii2} and that the resulting $Z_{\overline{N}}$ is an isoperimetric region. 
    Similarly as above, since for example from \eqref{eq:Step4b} we know that $P(G_i)$ is uniformly bounded, by item (b) of \cref{prop:SemicontinuitaAmbrosioBrueSemola}, we have that, up to subsequence, $G_i$ converges to a finite perimeter set $Z_{\overline{N}}\subset X_{\overline{N}}$ in $L^1_{\rm loc}$, that means that for every $r>0$ it occurs that $G_i \cap B_r(p_{i,\overline{N}}) \to Z_{\overline{N}} \cap B_r(p_{\overline{N}})$ in $L^1$-strong as $i\to+\infty$. Now since $T_{i,\overline{N}}\ge R_{i,\overline{N}}$ and $p_{i,\overline{N}}, R_{i,\overline{N}}$ are produced by \cref{it:COCOCompactness} in \cref{lem:CoCo}, then for any $\delta>0$ there is $r>0$ such that $\vol(G_i \setminus B_r(p_{i,\overline{N}}) ) < \delta$ for any $i$. Hence it is immediate to deduce that $\vol(G_i) \to \mathfrak{m}_{\overline{N}}(Z_{\overline{N}})$, and thus $G_i \to Z_{\overline{N}}$ in $L^1$-strong. So now one can argue exactly as we did above for $j<\overline{N}$, and one shows that $Z_{\overline{N}}$ is an isoperimetric region in $X_{\overline{N}}$ and $\lim_i P(G_i) = P_{X_{\overline{N}}}(Z_{\overline{N}})$.

    \item[Step 5.]
    We claim that item (iv) holds. 
    
    Indeed we already know from \eqref{eq:Step3b} (and the last condition in \eqref{eq:Step1} when $\overline N<+\infty$), and \cref{thm:RitoreRosales} that
    \[
    W= \sum_{j=1}^{\overline{N}} \meas_j (Z_j),
    \qquad
    V= \vol(\Omega) + W.
    \]
    Moreover from \cref{thm:RitoreRosales}, \eqref{eq:Step4b}, and item (iii) we also deduce
    \[
    I(V) = \lim_i \left(P(\Omega_i^c) + P(\Omega_i^d)\right) \ge P(\Omega) + \sum_{j=1}^{\overline{N}} P_{X_j}(Z_j) = I(\vol(\Omega)) + \sum_{j=1}^{\overline{N}} I_{X_j}(\meas(Z_j)).
    \]
    On the other hand, we are exactly in the hypotheses for applying \eqref{eq:InfinityInequalityIsopProfile}, that yields
    \[
    I(V) \le I(\vol(\Omega)) + \sum_{j=1}^{\overline{N}} I_{X_j}(\meas(Z_j)).
    \]
    Hence equality holds, and this completes the proof of \eqref{eq:Itemiv2}.
    
\end{itemize}
\end{proof}

\subsection{Counterexamples and optimality of the assumptions}\label{Sec:ExampleVanishing}

In this part we construct examples of a submanifolds $(M^2,g)$ of $\R^3$ that are either collapsed with sectional curvature bounded below, or noncollapsed with Ricci unbounded below. Moreover, in such manifolds for any $v>0$ there exist perimeter-minimizing sequences $E_i$ of volume $\vol(E_i)=v$ for any $i$ such that
\begin{equation}\label{eq:ExampleVanishing}
    \lim_i \sup_{p \in M} \vol( E_i \cap B_R(p) ) = 0
\end{equation}
for any $R>0$. The occurrence of \eqref{eq:ExampleVanishing} exactly means that \cref{it:COCOvanishing} in \cref{lem:CoCo} happens. It follows that the strategy of the proof of \cref{thm:MassDecomposition}, which is based on the iteration of \cref{it:COCODicotomia} or \cref{it:COCOCompactness} in \cref{lem:CoCo}, together with the explicit estimate in \cref{lem:MasLowBound} and \cref{cor:IsopBounded}, is no longer applicable as long as one of the two hypotheses of \cref{thm:MassDecomposition} does not hold, namely noncollapsedness or Ricci bounded below. The occurrence of \eqref{eq:ExampleVanishing}, in fact, implies that no subset of $E_i$ can converge in $L^1_{\rm loc}$ to a nonempty limit set contained in some asymptotic GH-limit of the manifold.

First, we construct a submanifold $(M^2,g)$ of $\R^3$ that is collapsed, has sectional curvature bounded below, and such that \eqref{eq:ExampleVanishing} occurs for a perimeter-minimizing sequence of volume $1$. The remaining desired examples are then constructed following the same lines and are described at the end of the section.

Denoting by $(x,y,z)$ the standard coordinates in $\R^3$, we consider the plane $\Pi\eqdef\{x=0\}$. Define by induction numbers $z^i_j$ for any $i \ge 1$ and $j=1,\ldots, i$ by setting
\begin{equation}\label{eq:Znj}
    \begin{split}
    z^1_1 = 0,&\\
    z^i_1 = z^{i-1}_{i-1} + i& \qquad \forall\, i \ge 2, \\
    z^i_{j+1} = z^i_j + i & \qquad \forall\, i \ge 2, \,  j = 1, \ldots, i-1.
    \end{split}
\end{equation}
Let $h:[2,+\infty)\to \R$ be the function $h(x) := 1/x^2$, and let $\Sigma$ be the surface of revolution defined by $h$ by the rotation about the $x$-axis. For any $i \ge 1$ and $j=1,\ldots, i$, it is possible to glue to $\Pi$ a translated copy of $\Sigma$ so that the rotation axis of the surface coincide with $\{y=0, z= z^i_j\}$ and the resulting surface has sectional curvature $\ge k$ for some $k \in (-\infty, 0]$. For any such $i,j$, we denote by $\Sigma^i_j$ the translated copy of $\Sigma$. We are going to modify the profile function of each $\Sigma^i_j$ on some set $\{x \ge x^i_j\}$, yielding new surfaces of revolution denoted by $\mathcal{E}^i_j$, without lowering too much the sectional curvature. This will complete the construction of the desired surface $M$. In the end, we want the surfaces $\mathcal{E}^i_j$ to satisfy
\begin{enumerate}
    \item[i)]
    $\vol(\mathcal{E}^i_j \cap \{x \ge x^i_j\} ) = 1/i$ for any $i\ge 1$ and any $j=1,\ldots, i$;
    
    \item[ii)]
    $P(\mathcal{E}^i_j  \cap \{x \ge x^i_j\} ) \le 1/2^i$ for any $i\ge 1$ and any $j=1,\ldots, i$. 
\end{enumerate}

So let $i,j\le i$ be fixed. Take $x^i_j$ very large so that $P(\Sigma^i_j \cap \{x \ge x^i_j\} ) \le 1/2^i$ and $\vol(\Sigma^i_j \cap \{x \ge x^i_j\} ) < 1 /4i$. First modify the profile function $h$ of $\Sigma^i_j$ into $h^i_j$ as depicted in \cref{Fig}. More precisely, the new function $h^i_j$ is smooth and such that: $(h^i_j)^{(\ell)} (x^i_j+1) = 0 $ for any $\ell$, $|(h^i_j)''| \le 4 |h'(x^i_j)| = 8/(x^i_j)^3$ on $[x^i_j,x^i_j+1]$, $|(h^i_j)''| \le 4 |h'(x^i_j+2)|$ on $[x^i_j+1,x^i_j+2]$, and $h^i_j = C^i_j/x^2$ for $x \ge x^i_j+2$. Since the sectional curvature of a revolution surface with profile function $H(x)$ is given by $- H''/[H(1+(H')^2)^2]$, using that $h^i_j \ge h (x^i_j + 1)$ on $[x^i_j,x^i_j+1]$ and that $h^i_j \ge C^i_j/(x^i_j+2)^2$ on $[x^i_j+1,x^i_j+2]$, it is immediate to conclude that the sectional curvature of the revolution surface given by $h^i_j$ is bounded below by some $k \in (-\infty,0]$ independent of $i,j$. Moreover, up to choosing a bigger $x^i_j$, we can further ensure that the volume of the revolution surface defined by $h^i_j$ is $\le 1/2i$. At this point it suffices to introduce a piece of round cylinder to recover the missing volume, that is, we define the final profile function $f^i_j$ for $\mathcal{E}^i_j$ by
\[
f^i_j(x) = \begin{cases}
h^i_j(x) & x \in [x^i_j,x^i_j+1], \\
h^i_j(x^i_j+1) & x \in (x^i_j + 1, x^i_j+1+L], \\
h^i_j(x-L) & x \in (x^i_j+1+L,+\infty),
\end{cases}
\]
taking $L>0$ so that $\vol(\mathcal{E}^i_j \cap \{x \ge x^i_j\} ) = 1/i$ (see \cref{Fig}).

\begin{figure}[H]
	\begin{center}
		\begin{tikzpicture}[scale=1]
		\path[font=\normalsize]
		(1.3,2.3)node[]{$h^i_j$};
		\draw
		(-0.5,0)--(15.5,0);
		\draw
		(0,-0.5)--(0,3);
		\path[font=\Huge]
		(1,-0.01)node[]{$\cdot$};
		\path[font=\normalsize]
		(1,-0.4)node[]{$x^i_j$};
		\path[font=\Huge]
		(3,-0.01)node[]{$\cdot$};
		\path[font=\normalsize]
		(2.9,-0.4)node[]{$x^i_j+1$};
		\path[font=\Huge]
		(5,-0.01)node[]{$\cdot$};
		\path[font=\normalsize]
		(4.9,-0.4)node[]{$x^i_j+2$};
		\draw
		(1,2)to[out=345, in=180](3,1.5)to[out=0, in=160](5,1)to[out=340, in= 178](10,0.3);
		\draw[dashed]
		(3,1.5)--(8,1.5);
		\draw[dashed]
		(8,1.5)to[out=0, in=160](10,1)to[out=340, in= 178](15,0.3);
		\path[font=\normalsize]
		(10,1.6)node[]{$f^i_j$};
		\path[font=\Huge]
		(8,-0.01)node[]{$\cdot$};
		\path[font=\normalsize]
		(7.95,-0.4)node[]{$x^i_j+1+L$};
		\end{tikzpicture}
	\end{center}
	\caption{Qualitative picture of the functions $h^i_j$ and $f^i_j$.}\label{Fig}
\end{figure}
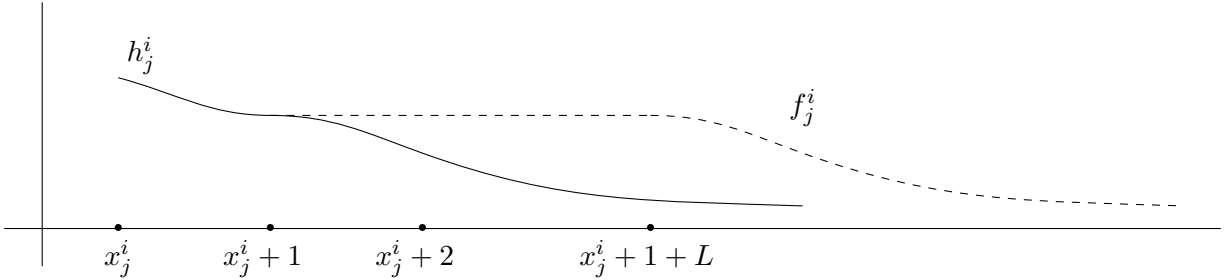

We can now show the existence of the claimed minimizing sequence. Let $E_i\eqdef \cup_{j=1}^i \mathcal{E}^i_j \cap \{x> x^i_j\}$. By i) we have that $\vol(E_i) =1 $ for any $i$, while ii) implies
\[
P(E_i) \le \frac{i}{2^i} \xrightarrow[i\to+\infty]{} 0 .
\]
Hence $E_i$ is perimeter-minimizing for the volume $1$, and $I_{(M,g)} (1)= 0$. Finally the choices of $z^i_j$ imply that, for any fixed $R>0$, any ball $B_R(p)$ intersects at most one connected component of $E_i$ for $i$ large. Hence we estimate
\[
\limsup_i \sup_{p \in M} \vol(E_i \cap B_R(p)) \le \limsup_i \max_{j} \vol (\mathcal{E}^i_j \cap \{x> x^i_j\} ) = \lim_i \frac1i = 0,
\]
and \eqref{eq:ExampleVanishing} follows.

In order to generalize the example to a manifold such that \eqref{eq:ExampleVanishing} occurs for some minimizing sequence of any assigned volume one can perform the following additional construction. Denote by $\{v_{\ell}\}_{\ell\in\N}$ an enumeration of $\Q \cap(0,+\infty)$. For any $\ell$ we can glue to the above constructed manifold a new sequence of cuspidal ends adapted to the volume $v_\ell$, just like done for the case of volume $1$, but along some lines $\{ y = y_\ell , x=0 \} \subset \Pi$ with $y_{\ell+1}>y_\ell$. This yields a final surface with sectional curvature bounded below such that for any $\ell$ there is a perimeter-minimizing sequence $\{E_i^\ell\}_{i\in \N}$ of volume $v_\ell$ such that \eqref{eq:ExampleVanishing} occurs. Then for a given $v>0$, a perimeter-minimizing sequence of volume $v$ satisfying \eqref{eq:ExampleVanishing} is given by $E_i^{\ell_i} \cup B_{r_i}(o)$ for some $v_{\ell_i}\to v^-$ and $r_i\to 0$ such that $\vol(E_i^{\ell_i} \cup B_{r_i}(o)) = v $. Observe that on such a manifold the isoperimetric profile identically vanishes (compare with \cref{rem:ProfiloPositivo}).

In order to get a noncollapsed surface such that for any $v>0$ there exist perimeter-minimizing sequences $E_i$ of $v$ such that \eqref{eq:ExampleVanishing} occurs, one can just replace the countably many sequences of cuspidal ends of the previous example with bubbles connected to $\Pi$ by means of shrinking catenoidal necks (clearly, the sectional curvature tends to $-\infty$ on such necks). In this way there occurs the same splitting of the volume of a minimizing sequence in a sequence of bubbles instead of a sequence of cuspidal ends. Once again, on such a manifold the isoperimetric profile identically vanishes (compare with \cref{rem:ProfiloPositivo}).



\subsection{The existence theorem}\label{sec:ExistenceRigidity}
The aim of this section is to exploit the previous result about the asymptotic mass decomposition to obtain existence of isoperimetric regions for Riemannian manifolds with some GH-prescriptions at infinity, completing the proof of \cref{thm:MainRicNcGHIntro}. 

When combined with a suitable asymptotic mass decomposition, the following result, due to Morgan--Johnson \cite[Theorem 3.5]{MorganJohnson00}, constitutes the key for the existence, since it asserts that a geodesic ball lying on a manifold with $\ric\ge (n-1)k$ is isoperimetrically more convenient then the ball in the model of curvature $k$ (having the same volume). The centrality of this comparison in such context was already pointed out in \cite{MondinoNardulli16}.

\begin{thm}\label{lem:ComparisonMJ}
Let $(M^n,g)$ be a complete Riemannian manifold such that $\ric\geq (n-1)k$ on some open set $\Omega\subset M^n$, for $k\in \R$. Then
$$
P(B)\leq P_{k}(\mathbb B_k(\vol(B))), \quad \text{for every geodesic ball $B\subset \Omega$},
$$
where
$\mathbb B_k(\vol(B))$ is a geodesic ball on the simply connected model of sectional curvature $k$ and dimension $n$ having volume equal to $\vol(B)$.

Moreover, equality holds if and only if $(B,g)$ is isometric to $(\mathbb B_k(\vol(B)),g_{k})$, where $g_k$ is the metric on the simply connected model of sectional curvature $k$ and dimension $n$.
\end{thm}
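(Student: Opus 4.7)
My plan is to reduce the claim to the classical Bishop--Gromov volume comparison (\cref{thm:BishopGromov}) combined with the fact that the isoperimetric profile of the model space restricted to balls is concave. Set $B=B_R(p)\subset \Omega$ and $V\eqdef\vol(B)$. Bishop--Gromov gives that $r\mapsto \vol(B_r(p))/v(n,k,r)$ is nonincreasing on $(0,R]$, so in particular $V\leq v(n,k,R)$; hence there is a unique $\bar R\in(0,R]$ with $v(n,k,\bar R)=V$, which is the radius of $\mathbb B_k(V)$. Both $r\mapsto \vol(B_r(p))$ and $r\mapsto v(n,k,r)$ are absolutely continuous with a.e.\ derivatives $P(B_r(p))$ and $s(n,k,r)$ by the coarea formula; differentiating the Bishop--Gromov monotonicity at good radii $r$ and then passing to the limit $r\searrow R$ (using continuity of $\vol(B_r)$ in $r$ and lower semicontinuity of $P$) yields
\[
P(B)\;\leq\; V\,\frac{s(n,k,R)}{v(n,k,R)}\;=\; v(n,k,\bar R)\,\frac{s(n,k,R)}{v(n,k,R)}.
\]

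It therefore suffices to show that $r\mapsto v(n,k,r)/s(n,k,r)$ is nondecreasing, for then $\bar R\leq R$ yields $v(n,k,\bar R)/s(n,k,\bar R)\leq v(n,k,R)/s(n,k,R)$ and the previous display becomes $P(B)\leq s(n,k,\bar R)=P_k(\mathbb B_k(V))$, which is the desired estimate. Setting $\sigma\eqdef\sn_k$, the monotonicity is equivalent to the pointwise inequality $\sigma^n(r)\geq(n-1)\sigma'(r)\int_0^r\sigma^{n-1}(t)\de t$, which is trivial when $\sigma'(r)\leq 0$ (as happens for $k>0$, $r\geq \pi/(2\sqrt{k})$). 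When $\sigma'(r)>0$ it rearranges to $\sigma^n/\sigma'\geq(n-1)\int_0^r\sigma^{n-1}\de t$, and using the Jacobi identities $\sigma''=-k\sigma$ and $(\sigma')^2=1-k\sigma^2$ one checks that both sides vanish at $r=0$ and that the derivative of the left hand side equals $[n\sigma^{n-1}-(n-1)k\sigma^{n+1}]/(1-k\sigma^2)$, which dominates $(n-1)\sigma^{n-1}$ since, after clearing the denominator, the required inequality reduces to the triviality $n\geq n-1$.

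For the rigidity statement, equality in the main inequality forces in particular $v(n,k,\bar R)/s(n,k,\bar R)=v(n,k,R)/s(n,k,R)$; since the above computation is strict whenever $\bar R<R$ (the underlying inequality $n\geq n-1$ being strict for $\sigma>0$), this forces $\bar R=R$, i.e.\ equality in Bishop--Gromov at radius $R$. The rigidity clause of Bishop--Gromov then provides the desired isometry between $(B,g)$ and $(\mathbb B_k(v(n,k,R)),g_k)$. The main obstacle in this strategy is really the model-space monotonicity of $v/s$: while elementary, it is not automatic and requires the ODE computation above to cover all values of $k$ uniformly.
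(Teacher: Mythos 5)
Your proof is correct, but it is worth noting that the paper does not prove this statement at all: it is quoted verbatim from Morgan--Johnson \cite[Theorem 3.5]{MorganJohnson00}, so you have supplied a self-contained argument where the authors rely on a citation. Your route is in fact the classical one behind the Morgan--Johnson comparison: combine the differentiated Bishop--Gromov inequality $P(B_r(p))\,v(n,k,r)\le \vol(B_r(p))\,s(n,k,r)$ (which is exactly item \eqref{3} of \cref{thm:BishopGromov}, so you could cite it rather than rederive it) with the monotonicity of the model ratio $r\mapsto v(n,k,r)/s(n,k,r)$, which you verify by the ODE computation reducing to $n\ge n-1$; that computation is correct, including the sign discussion for $k>0$. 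The rigidity direction you prove (equality forces $\bar R=R$ by strictness of the model monotonicity, hence equality in the volume comparison, hence isometry) is the direction the paper actually uses in \cref{thm:Rigidity1} and \cref{thm:Rigidity2}.

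Three small points to tidy up. First, the limit should be taken as $r\nearrow R$, not $r\searrow R$: the a.e.\ inequality is only available for radii with $B_r(p)\Subset\Omega$, and lower semicontinuity of the perimeter works for the increasing approximation since $\vol(\partial B_R(p))=0$. Second, the rigidity clause of \cref{thm:BishopGromov} as stated requires $B_{\overline r}(p)\Subset\Omega$, while the theorem only assumes $B\subset\Omega$; this is harmless because equality of the ratio at $R$ together with monotonicity forces $\vol(B_r(p))=v(n,k,r)$ for every $r<R$, so you get the isometry on each $B_r(p)$ and pass to the union. Third, you only prove the ``only if'' half of the equality statement; the converse (an isometric ball has the model perimeter) is not addressed, though it is standard and not needed anywhere in the paper.
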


We can now state and prove our main existence result.

\begin{thm}\label{thm:MainRicNcGH}
Let $k\in(-\infty,0]$ and let $(M^n,g)$ be a complete noncompact Riemannian manifold such that $\ric\geq (n-1)k$ on $M\setminus\mathcal{C}$, where $\mathcal{C}$ is compact.

Suppose that $(M^n,g)$ is GH-asymptotic to the simply connected model of constant sectional curvature $k$ and dimension $n$.

Then for any $V>0$ there exists an isoperimetric region of volume $V$ on $(M^n,g)$.
\end{thm}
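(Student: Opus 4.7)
The plan is to combine the asymptotic mass decomposition of \cref{thm:MassDecomposition} with the Morgan--Johnson comparison in \cref{lem:ComparisonMJ} to rule out any leak of mass at infinity. To begin, I would verify the hypotheses of \cref{thm:MassDecomposition} in the present setting: the continuity of the Ricci tensor on the compact set $\mathcal C$ gives a global lower bound $\ric \geq (n-1)k'$ on $M^n$ for some $k'\leq k$, while noncollapsedness follows from the GH-asymptoticity. Indeed, along any diverging sequence $p_i$, pGH-convergence to the model $(\R^n,g_k)$ upgrades to pmGH-convergence by \cref{thm:volumeconvergence}, forcing $\vol(B_1(p_i))\to v(n,k,1)$; combined with continuity on compact sets, this yields a uniform $v_0>0$ with $\vol(B_1(p))\geq v_0$ for every $p\in M^n$.

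Fixing $V>0$, \cref{rem:Approximation} provides a perimeter-minimizing sequence $\Omega_i$ of bounded finite perimeter sets with $\vol(\Omega_i)=V$. By \cref{thm:MassDecomposition}, up to subsequence we decompose $\Omega_i$ into a converging part $\Omega_i^c \to \Omega$, where $\Omega$ is an isoperimetric region of its own volume, and diverging parts converging in $L^1$-strong to isoperimetric regions $Z_j$ in pmGH limits $X_j$ at infinity. If $W\eqdef V-\vol(\Omega)=0$, then $\Omega$ itself has volume $V$ and is isoperimetric, and the proof is complete. Otherwise, the GH-asymptoticity hypothesis together with \cref{thm:volumeconvergence} forces every pmGH limit $(X_j,\dist_j,\mathfrak m_j)$ to be isometric to $(\R^n,\dist_k,\vol_k)$, the pointed base being irrelevant by homogeneity of the model. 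In $(\R^n,g_k)$ with $k\leq 0$, the isoperimetric regions of any prescribed volume are geodesic balls, so $P_{X_j}(Z_j)=P_k(\mathbb B_k(V_j))$ where $V_j\eqdef \mathfrak m_j(Z_j)$ and $\mathbb B_k(V_j)$ denotes the geodesic ball in the model of volume $V_j$.

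I would then construct a competitor of volume $V$ attaining the infimum. Since $\sum_j V_j=W<+\infty$ implies $V_j\to 0$, and since the two-sided Bishop--Gromov bounds $v_0\, v(n,k,r)/v(n,k,1)\leq \vol(B_r(q))\leq v(n,k,r)$ ensure that any radius $r_j$ realizing $\vol(B_{r_j}(q))=V_j$ satisfies $r_j\to 0$, and since $M^n$ is noncompact while $\Omega$ and $\mathcal C$ are bounded, I can inductively choose centers $q_j\in M^n\setminus\mathcal C$ drifting to infinity and radii $r_j$ with $\vol(B_{r_j}(q_j))=V_j$ (by continuity of $r\mapsto \vol(B_r(q_j))$) such that the balls $\widetilde B_j\eqdef B_{r_j}(q_j)$ are pairwise disjoint, disjoint from $\Omega$, and contained in $M^n\setminus\mathcal C$. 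Then \cref{lem:ComparisonMJ} gives $P(\widetilde B_j)\leq P_k(\mathbb B_k(V_j))=P_{X_j}(Z_j)$. Setting $E\eqdef \Omega\cup \bigcup_j \widetilde B_j$, the disjointness and positive mutual separation together with \eqref{eq:Itemiv2} yield $\vol(E)=\vol(\Omega)+\sum_j V_j=V$ and
\[
P(E)=P(\Omega)+\sum_j P(\widetilde B_j)\leq P(\Omega)+\sum_j P_{X_j}(Z_j)=I(V).
\]
Since $E$ is admissible, $P(E)\geq I(V)$, hence $P(E)=I(V)$ and $E$ is an isoperimetric region of volume $V$.

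The main obstacle is the simultaneous realization of the disjointness, volume, and ambient constraints in the construction of the $\widetilde B_j$, especially when $\overline N=+\infty$. This rests on the Bishop--Gromov bounds, which force $r_j\to 0$ as $V_j\to 0$ so that countably many such balls can be accommodated, and on the noncompactness of $M^n$ together with the boundedness of $\mathcal C$ and $\Omega$, which leaves room to scatter the centers $q_j$ to infinity with the required separation.
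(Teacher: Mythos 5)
Your proposal is correct and follows essentially the same route as the paper: apply the asymptotic mass decomposition (\cref{thm:MassDecomposition}), use GH-asymptoticity plus the volume convergence theorem to identify every limit at infinity with the model $\mathbb M^n_k$ (so the $Z_j$ have the perimeter of model balls), and then recover the lost volume by disjoint geodesic balls placed in the ends, where the Morgan--Johnson comparison (\cref{lem:ComparisonMJ}) shows they cost no more perimeter. The only (harmless) divergences are bookkeeping ones: you control the radii $r_j$ via the two-sided Bishop--Gromov bounds, whereas the paper first proves that balls of arbitrarily large volume exist in any end, and you explicitly record the global lower Ricci bound obtained from compactness of $\mathcal C$, which the paper leaves implicit.
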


\begin{proof}
Since $(M^n,g)$ is GH-asymptotic to the simply connected model of constant sectional curvature $k$ and dimension $n$, then $(M^n,g)$ is noncollapsed. Indeed, if there is a sequence of balls $B_1(y_i)$ with $\lim_i \vol(B_1(y_i))=0$, then $y_i$ must diverge to infinity, hence by assumption $(M^n,\dist,y_i) $ converges in the pGH-sense to the simply connected model of constant sectional curvature $k$ and dimension $n$, which we denote here by $\mathbb M^n_k$, with its own geodesic distance and volume measure, and pointed at some fixed $o \in \mathbb M^n_k$. Hence item (b) in \cref{thm:volumeconvergence} occurs, and thus $n={\rm dim}_H \, \mathbb M^n_k  \le n-1$, which is impossible.

By \cref{rem:Approximation}, let $\Omega_i \subset M^n$ be a minimizing sequence (for the perimeter) of volume $V>0$ such that $\Omega_i$ is bounded and smooth for any $i$. Let $\Omega_i^c, \Omega_i^d$ be as in \cref{thm:RitoreRosales}. If $\vol(\Omega_i^d)\to 0$, then the set $\Omega$ given by \cref{thm:RitoreRosales} is an isoperimetric region of the volume $V$ and the proof ends. So suppose instead that $\lim_i \vol(\Omega_i^d) = W >0$. Then we can apply \cref{thm:MassDecomposition}. We employ the notation of \cref{thm:MassDecomposition}. By assumption and from \cref{thm:volumeconvergence}, for any $j<\overline{N}+1$ the pmGH limit space $(X_j,\dist_j,\mathfrak{m}_j,p_j)$ is $ \mathbb M^n_k$ with its own geodesic distance and volume measure, and pointed at some fixed $o \in \mathbb M^n_k$. Moreover, since in $\mathbb M_k^n$ balls are isoperimetric regions for their own volume, we have that, for any $j<\overline N+1$,
\begin{equation}\label{eq:ApplicoMJ}
P_k(Z_j) \ge P_k(\mathbb B_k(\vol_k(Z_j))),
\end{equation}
where $\mathbb B_k(\vol_k(Z_j))$ is a geodesic ball in $\mathbb M_k^n$ having volume equal to $\vol_k(Z_j)$, while $P_k$ is the perimeter functional on $\mathbb M^n_k$.

Now observe that for any compact set $\mathcal{K}\subset M^n$, we have that
\begin{equation}\label{eq:PalleGrandiEnd}
    \sup \left\{ \vol(B_r(p)) \st r>0\,\text{ and }\,B_r(p) \Subset M\setminus\mathcal{K} \right\} = +\infty.
\end{equation}
Indeed, suppose by contradiction the above supremum is bounded by a constant $S<+\infty$. Take $R>0$ such that $\vol_k( B^{\mathbb M_k^n}_R(o) ) > 10 S$, where $B^{\mathbb M_k^n}_R(o)$ is a ball of radius $R$ and center $o$ in $\mathbb M_k^n$. Consider a sequence of balls $B_R(x_i) \subset M^n$ of radius $R$ with $\dist(x_i,\mathcal{K})\to+\infty$. Then, up to passing to a subsequence, \cref{thm:volumeconvergence} and the absurd hypothesis imply that
\[
S \ge \lim_i \vol( B_R(x_i) ) > 10 S,
\]
that is impossible.

Hence \eqref{eq:PalleGrandiEnd}, together with the continuity of the volume with respect to the radius of balls, imply that, for any compact set $\mathcal{K}\subset M^n$ and any assigned finite volume $v$, we can find a ball of volume $v$ compactly contained in the end $M\setminus \mathcal{K}$. So let $v_j \eqdef \vol_k(Z_j)$ for $j<\overline{N}+1$. Since $\Omega = \lim_i \Omega_i^c$ is bounded by \cref{cor:IsopBounded}, there is a first compact set $\mathcal{K}_1$ such that $\Omega \cup \mathcal{C} \subset \mathcal{K}_1$. Then by \eqref{eq:PalleGrandiEnd} there is a ball $B_{r_1}(q_1) \Subset M^n \setminus \mathcal{K}_1$ such that $\vol(B_{r_1}(q_1)) = v_1$. Inductively, for any $2\leq j<\overline{N}+1$ we find a compact set $\mathcal{K}_j$ such that
\[
\mathcal{K}_j \Supset B_{r_{j-1}}(q_{j-1}) \cup \mathcal{K}_{j-1},
\]
and balls $B_{r_j}(q_j) \Subset M^n \setminus \mathcal{K}_j$ having volume $\vol(B_{r_j}(q_j)) = v_j$. Hence the balls $\{B_{r_j}(q_j)\st j<\overline{N}+1\}$ are pairwise located at positive distance and we can define the set
\[
\widetilde\Omega \eqdef \Omega \cup \bigcup_{j=1}^{\overline{N}} B_{r_j}(q_j).
\]
By \eqref{eq:Itemiv2} we have that
\[
\vol(\widetilde\Omega) = \vol(\Omega) + \sum_{j=1}^{\overline{N}} \vol( B_{r_j}(q_j) ) = \vol(\Omega) + \sum_{j=1}^{\overline{N}} v_j = \vol(\Omega) + W = V.
\]
Moreover, combining \eqref{eq:Itemiv2}, \eqref{eq:ApplicoMJ}, and \cref{lem:ComparisonMJ}, since all the constructed balls $B_{r_j}(q_j)$ are contained in an open set of $M^n$ on which $\ric \ge (n-1)k$, we obtain
\begin{equation}\label{eq:StimeMain1}
\begin{split}
   I(V) &= P(\Omega) + \sum_{j=1}^{\overline{N}} P_k(Z_j) \ge 
P(\Omega) + \sum_{j=1}^{\overline{N}} P_k(\mathbb B_k(\vol_k(Z_j)) )\\
&\geq P(\Omega) + \sum_{j=1}^{\overline{N}} P(B_{r_j}(q_j)) = P(\widetilde\Omega). 
\end{split}
\end{equation}
Therefore $\widetilde\Omega$ is an isoperimetric region for the volume $V$.
\end{proof}

\begin{remark}\label{rem:Nfinite}
We observe that a posteriori $\overline{N}$ is a finite natural number in $\N$ in the proof of \cref{thm:MainRicNcGH}. Indeed, if $\overline{N}=+\infty$, then the countably many constructed balls $B_{r_j}(q_j)$ can be easily taken so that the resulting $\widetilde\Omega$ is unbounded. But as $\widetilde\Omega$ turns out to be an isoperimetric region, it must be bounded by \cref{cor:IsopBounded}.
\end{remark}

\section{Applications and examples}\label{sec:ApplicationsExamples}

In this section we give effective conditions that imply the hypotheses of \cref{thm:MainRicNcGH}. We start by recalling some definitions about convergence of manifolds. The following definition is taken from \cite[Section 11.3.2]{Petersen2016}.

\begin{defn}[$C^0$-convergence of manifolds]\label{defn:C0ConvergenceManifolds}
Given $(M^n,g,p)$ a pointed Riemannian manifold, and $\{(M_i^n,g_i,p_i)\}_{i\in\mathbb N}$ a sequence of pointed Riemannian manifolds, we say that $(M_i^n,g_i,p_i)$ converge to $(M^n,g,p)$ {\em in the $C^0$-sense} if for every $R>0$ there exists a domain $\Omega\subset M^n$ containing $B_R(p)$ and, for large $i$, embeddings $F_i:\Omega\to M_i^n$ such that $F_i(p)=p_i$, $F_i(\Omega)$ contains $B_R(p_i)$, and the pull-back metrics $F_i^*g_i$ converge to $F$ in the $C^0$-sense on $\Omega$, i.e., all the components of the metric tensors converge in the $C^0$ norm in a finite covering of coordinate patches on $\Omega$. 
\end{defn}

By using the previous notion of convergence, we can define what means for a Riemannian manifold to be $C^0$-asymptotic to the simply connected model $\mathbb M^n_k$ of dimension $n\in\mathbb N$ and constant sectional curvature $k\in\mathbb R$. The forthcoming notion has been investigated in \cite{MondinoNardulli16}, see in particular \cite[Theorem 1.2]{MondinoNardulli16}.
\begin{defn}[$C^0$-local asymptoticity]
We say that a Riemannian manifold $(M^n,g)$ is {\em $C^0$-locally asymptotic} to the simply connected model $\mathbb M^n_k$ of dimension $n\in\mathbb N$ and constant sectional curvature $k\in\mathbb R$ if for every diverging sequence of points $p_i$ in $M^n$ we have that $(M^n,g,p_i)$ converge to $(\mathbb M^n_k,g_k,o)$ in the $C^0$-sense, where $g_k$ is the Riemannian metric on $\mathbb M^n_k$ and $o$ is a fixed origin. 
\end{defn}

\begin{remark}[GH-asymptoticity and $C^0$-local asymptoticity]\label{rem:GHandC0}
We remark that our \cref{thm:MainRicNcGH} implies one of the main theorems in \cite{MondinoNardulli16}, namely \cite[Theorem 1.2]{MondinoNardulli16}. Indeed, the notion of being $C^0$-locally asymptotic to the simply connected model $\mathbb M^n_k$ of constant sectional curvature $k\in\mathbb R$ and dimension $n\in\mathbb N$, see \cite[Definition 2.2, Definition 2.4]{MondinoNardulli16}, is readily stronger than being GH-asymptotic to $\mathbb M^n_k$, cf. \cite[Section 11.3.2]{Petersen2016}.

As a consequence, all the examples in \cite[Remark 1.1]{MondinoNardulli16}, namely the ALE gravitational instantons, the asymptotically hyperbolic Einstein manifolds, and the Bryant type solitons satisfy the hypotheses of \cref{thm:MainRicNcGH}.

\end{remark}


As an easy consequence of \cref{lem:ComparisonMetrics} we get a criterion to check that a Riemannian manifold is $C^0$-locally asymptotic, and hence GH-asymptotic (see \cref{rem:GHandC0}), to the simply connected model of constant sectional curvature $k\in\mathbb R$ and dimension $n\in\mathbb N$. We introduce our notions of \emph{sectional curvature asymptotically equal to $k$} and of \emph{asymptotically diverging injectivity radius}.


\begin{defn}[Sectional curvature asymptotically equal to $k$]\label{def:AVSC}
   Given $k\in (- \infty, 0]$, we say that a noncompact Riemannian manifold $(M^n,g)$ has {\em sectional curvature asymptotically equal to $k$} if there exists $o\in M^n$ such that for every $0<\varepsilon<1$ there exists $R_\epsilon>0$ for which 
   \begin{equation}\label{eqn:VanishingCurv}
      \begin{split}
          \abs{\sect_x(\pi)-k}\leq \varepsilon& \qquad \text{for all}\, x\in M\setminus \overline{B}_{R_\epsilon}(o),\,\,\text{for all 2-planes $\pi$ in $T_x M^n$}.
      \end{split}
   \end{equation}
   If $k=0$ we say that $(M^n,g)$ has {\em asymptotically vanishing sectional curvature}.
\end{defn}

We stress that, from now on, when we write $\abs{\sect}\leq c$ everywhere on some set $\Omega$, we mean that $\abs{\sect_x(\pi)}\leq c$ for every $x\in\Omega$ and every 2-plane $\pi\in T_x M^n$.

\begin{defn}[Asymptotically diverging injectivity radius]\label{def:ADIR}
   We say that a noncompact Riemannian manifold $(M^n,g)$ has {\em asymptotically diverging injectivity radius} if there exists $o\in M^n$ such that for every $S>1$ there exists $R_S>0$ for which 
   \begin{equation}\label{eqn:DivergingRadius}
      \begin{split}
          \mathrm{inj}(x)\geq  S &\qquad \text{for all}\,\, x\in M\setminus \overline{B}_{R_S}(o).
      \end{split}
   \end{equation}
\end{defn}
In the following statement we record how the coupling of the two conditions above suffices to infer the GH-asymptoticity to space forms. The following statement is a consequence of \cref{lem:ComparisonMetrics}.
\begin{prop}\label{prop:C0Asymp}
Let $(M^n,g)$ be a complete noncompact Riemannian manifold with \emph{sectional curvature asymptotically equal to $k$}, for some $k \in (-\infty, 0]$, and with \emph{asymptotically diverging injectivity radius}. 
Then, $(M^n, g)$ is $C^0$-locally asymptotic, and hence GH-asymptotic, to the simply connected model $\mathbb M^n_k$ of dimension $n\in\mathbb N$ and constant sectional curvature $k\in(-\infty, 0]$.
\end{prop}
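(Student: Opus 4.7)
The plan is to produce, for any diverging sequence $p_i \in M^n$ and any scale $R>0$, embeddings $F_i$ of a neighborhood of the origin $o \in \mathbb{M}^n_k$ into $M^n$ satisfying the requirements of \cref{defn:C0ConvergenceManifolds}; these will be built as compositions of the inverse exponential map at $o$, a linear isometry of tangent spaces, and the exponential map at $p_i$. The $C^0$-closeness of the pulled-back metric to $g_k$ will then be a direct consequence of \cref{lem:ComparisonMetrics}. The GH-asymptoticity will follow automatically from \cref{rem:GHandC0}.

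More precisely, fix $p_i \to \infty$ in $M^n$ and $R>0$. Since $k\leq 0$, the exponential $\exp_o^{\mathbb{M}^n_k}: T_o\mathbb{M}^n_k \to \mathbb{M}^n_k$ is a global diffeomorphism, so $\exp_o^{\mathbb{M}^n_k}$ maps the ball $B_{R+1}(0) \subset T_o\mathbb{M}^n_k$ diffeomorphically onto $\Omega \eqdef B_{R+1}(o) \supset B_R(o)$. Given $\varepsilon \in (0,1)$, \cref{def:AVSC} and \cref{def:ADIR} (applied with $S=R+1$) together with the fact that $\dist(p_i,q)\to +\infty$ for any fixed reference point $q$, yield $i_0 = i_0(\varepsilon,R)$ such that for all $i\geq i_0$ one has $B_{R+1}(p_i) \subset M^n \setminus \overline{B}_{\max\{R_\varepsilon,R_{R+1}\}}(q)$, so that $|\sect -k|\leq \varepsilon$ on $B_{R+1}(p_i)$ and $\mathrm{inj}(p_i) \geq R+1$. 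In particular $\exp_{p_i}^{M^n}: B_{R+1}(0)\subset T_{p_i}M^n \to B_{R+1}(p_i)$ is a diffeomorphism. Fixing an orthonormal frame at $o$ and one at each $p_i$, let $L_i: T_o\mathbb{M}^n_k \to T_{p_i}M^n$ be the induced linear isometry, and define
\[
F_i \eqdef \exp_{p_i}^{M^n} \circ L_i \circ (\exp_o^{\mathbb{M}^n_k})^{-1} : \Omega \longrightarrow M^n.
\]
By construction $F_i$ is a smooth embedding, $F_i(o) = p_i$, and $F_i(\Omega) = B_{R+1}(p_i) \supset B_R(p_i)$.

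It then remains to show that $F_i^*g \to g_k$ in $C^0(\Omega)$. Reading everything in normal coordinates via $\exp_o^{\mathbb{M}^n_k}$, this is equivalent to proving that the metric coefficients of $(\exp_{p_i}^{M^n} \circ L_i)^*g$ on $B_{R+1}(0) \subset \mathbb{R}^n$ converge uniformly to those of the model metric of constant sectional curvature $k$. This is exactly the content of \cref{lem:ComparisonMetrics}: the bound $|\sect -k| \leq \varepsilon$ on $B_{R+1}(p_i)$ together with $\mathrm{inj}(p_i)\geq R+1$ force the normal-coordinate metric coefficients to agree with those of $g_k$ up to an error $\omega(\varepsilon, R)$ tending to $0$ as $\varepsilon \to 0$ (this is a quantitative Rauch-type comparison). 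Since $\varepsilon$ can be taken arbitrarily small provided $i$ is large enough, we obtain $F_i^*g \to g_k$ uniformly on $\Omega$, which gives the $C^0$-local asymptoticity; the GH-asymptoticity then follows from \cref{rem:GHandC0}.

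The main obstacle is that the argument above is only as strong as the comparison statement in \cref{lem:ComparisonMetrics}, so everything hinges on having a quantitative, global (over the ball of radius $R+1$) $C^0$-comparison between the metric coefficients in normal coordinates under two-sided sectional curvature pinching near $k$, which is a standard consequence of Jacobi field/Rauch comparison; granting this lemma (as the paper does), the proof reduces to the essentially mechanical assembly of exponential maps and isometric frame identifications described above.
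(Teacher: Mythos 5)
Your proposal is correct and follows essentially the same route as the paper, which likewise deduces the proposition directly from \cref{lem:ComparisonMetrics} after building the comparison charts via exponential maps. The only gloss is that \cref{lem:ComparisonMetrics} does not literally output a modulus $\omega(\varepsilon,R)$: one applies it twice, with the models of curvature $k+\varepsilon$ and $k-\varepsilon$, to sandwich $g_{k+\varepsilon}\le F_i^*g\le g_{k-\varepsilon}$ as quadratic forms on $B_{R+1}$, and then uses the uniform convergence $\sn_{k\pm\varepsilon}\to\sn_k$ on compact sets (together with polarization to recover the individual coefficients) to obtain the claimed $C^0$-closeness.
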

By means of a classical compactness theorem, we can actually prove more than the $C^0$-local convergence above, as explained in the following Remark. 


We are now committed to link the two above defined notions of asymptotically constant sectional curvature and asymptotically diverging injectivity radius, in order to  discuss some effective and basic conditions that on a complete noncompact Riemannian manifold ultimately imply  the existence of isoperimetric regions of any volume. The following is essentially a consequence of a fundamental injectivity radius estimate in \cite{CheegerGromovTaylor}.

\begin{lemma}\label{lem:CGT}
Let $(M^n, g)$ be a complete Riemannian manifold with asymptotically vanishing sectional curvature. Let us assume there exists a compact set $\mathcal{C} \subset M^n$, a real number $\alpha<1$, and a constant $C>0$ such that
\begin{equation}
\label{volume-cond}
\mathrm{vol}{(B_r(p))} \geq C r^{n-\alpha},
\end{equation}
for any ball $B_r (p) \subset M^n \setminus \mathcal{C}$ with $r>1$.
Then $(M^n, g)$ has asymptotically diverging injectivity radius.
\end{lemma}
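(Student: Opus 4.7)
The natural strategy is to invoke the injectivity radius estimate of Cheeger--Gromov--Taylor \cite{CheegerGromovTaylor}, which in our setting asserts: if $|\sect|\leq K$ on $B_r(p)$ and $r<\pi/(4\sqrt{K})$, then
\[
\inj(p) \geq \min\left\{\frac{\pi}{2\sqrt{K}},\ \frac{r\,\vol(B_r(p))}{\vol(B_r(p)) + V_K(2r)}\right\},
\]
where $V_K(\rho)$ denotes the volume of a ball of radius $\rho$ in the simply connected $n$-dimensional space form of sectional curvature $K$. The whole point is then to show that the volume lower bound \eqref{volume-cond}, having exponent $n-\alpha$ strictly larger than $n-1$, is strong enough relative to the Euclidean growth $V_0(2r)=\omega_n(2r)^n$ to force the right-hand side of the above estimate to blow up.

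Fix $S>1$. First I would choose $r>1$ so large that
\[
\frac{C\,r}{C + 2^{n+1}\omega_n r^{\alpha}} \,\geq\, S,
\]
which is possible since $\alpha<1$ makes the left-hand side diverge as $r\to+\infty$. With this $r$ fixed, by continuity of $K\mapsto V_K(2r)$ at $K=0$ I would then pick $\epsilon>0$ so small that simultaneously $r<\pi/(4\sqrt{\epsilon})$, $\pi/(2\sqrt{\epsilon})\geq S$, and $V_\epsilon(2r)\leq 2\omega_n(2r)^n$.

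Next I would invoke \cref{def:AVSC} with this $\epsilon$ to obtain $R_\epsilon>0$ so that $|\sect|\leq\epsilon$ on $M^n\setminus\overline{B}_{R_\epsilon}(o)$, and define $R_S\eqdef R_\epsilon + r + \mathrm{diam}(\mathcal{C}) + \dist(o,\mathcal{C}) + 1$. For any $p$ with $\dist(p,o)\geq R_S$, the ball $B_r(p)$ lies both inside $M^n\setminus\overline{B}_{R_\epsilon}(o)$ and outside $\mathcal{C}$, so the curvature bound $|\sect|\leq \epsilon$ holds on $B_r(p)$ and the hypothesis \eqref{volume-cond} yields $\vol(B_r(p))\geq Cr^{n-\alpha}$. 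Plugging these into the Cheeger--Gromov--Taylor estimate, the first candidate $\pi/(2\sqrt{\epsilon})$ is $\geq S$ by the choice of $\epsilon$, while the second is bounded below by
\[
\frac{r\cdot Cr^{n-\alpha}}{Cr^{n-\alpha} + 2^{n+1}\omega_n r^n} \,=\, \frac{Cr}{C + 2^{n+1}\omega_n r^\alpha} \,\geq\, S
\]
by the choice of $r$. Hence $\inj(p)\geq S$, proving \eqref{eqn:DivergingRadius}.

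The only delicate point is invoking the precise version of the Cheeger--Gromov--Taylor estimate and verifying that its quantitative hypothesis $r<\pi/(4\sqrt{\epsilon})$ can be accommodated simultaneously with the other two smallness requirements on $\epsilon$; but since all three constraints on $\epsilon$ are satisfied by taking $\epsilon$ sufficiently small once $r$ has been fixed, this is a matter of careful bookkeeping rather than a real obstruction.
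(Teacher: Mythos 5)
Your proof is correct and follows essentially the same route as the paper: both hinge on the Cheeger--Gromov--Taylor injectivity radius estimate, fed with the hypothesis $\vol(B_r(p))\ge Cr^{n-\alpha}$, with $\alpha<1$ being exactly what makes the ratio $r\vol(B_r(p))/(\vol(B_r(p))+V(2r))\sim r^{1-\alpha}$ diverge; the only differences are that you invoke the packaged form of CGT with the $\min\{\pi/(2\sqrt K),\cdot\}$ built in (the paper instead runs the Klingenberg dichotomy and Rauch comparison by hand before applying CGT to the geodesic-loop case), and that you decouple the choices by fixing $S$, then $r$, then $\epsilon$, whereas the paper couples them via $r=\pi/\sqrt{\epsilon}$ and lets $\epsilon\to0$. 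One small inaccuracy in your citation: the comparison volume in the CGT bound should be taken in the space form of curvature $-K$ (the lower curvature bound), not $+K$, but since $K=\epsilon\to0$ with $r$ fixed this does not affect your continuity argument.
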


\begin{proof}
Let $0<\epsilon<1/100$, and fix $o \in M^n$. By the asymptotic vanishing of the sectional curvature, there exists a radius $R_\epsilon$ such that $\abs{\sect} < \epsilon$ on $M^n \setminus \overline{B}_{R_\epsilon}(o)$ and $\mathcal{C}\subset B_{R_\eps}(o)$. Let then $p \in M^n \setminus \overline{B}_{R_\epsilon + \pi/\sqrt{\epsilon}}(o)$, and observe that $B_{\pi/\sqrt{\epsilon}}(p) \Subset (M^n\setminus \overline{B}_{R_\epsilon}(o))$.

Assume that
$\mathrm{inj}(p) < \pi /\sqrt{\epsilon}$. Then, there exists $q \in \mathrm{Cut}(p)$ such that $d(p, q) = \mathrm{inj}(p)$, and that in particular still belongs to $M^n \setminus \overline{B}_{R_\epsilon} (o)$. Then, by \cite[Proposition 2.12, Chapter 13]{docarmo}, either there exists a geodesic $\gamma$ joining $p$ and $q$ such that $q$ is conjugate to $p$ along $\gamma$, or there exists a geodesic loop $\sigma$ based at $p$ passing through $q$ with length equal to $2\inj(p)$, so that in particular $\sigma$ is still contained in $M^n \setminus \overline{B}_{R_\epsilon}(o)$. In the first case, a straightforward application of Rauch's Comparison Theorem \cite[Proposition 2.4, Chapter 10]{docarmo} implies that $d(p, q) \geq \pi/\sqrt{\epsilon}$, a contradiction with the assumption above.

In the second case, we have $2 \, \mathrm{inj} (p) = \ell := \mathrm{length} \, (\sigma)$, and we can thus estimate $\mathrm{inj} (p)$ in terms of volumes of balls by means of \cite[Theorem 4.3]{CheegerGromovTaylor}, that yields
\begin{equation}
\label{cgt-applied}
\mathrm{inj}(p) 
\ge \frac{\pi}{8\sqrt{\eps}} \left(1 + \frac{v(n, -\epsilon, \frac{7\pi}{16\sqrt{\eps}})}{\mathrm{vol}\left(B_{\frac{3\pi}{16\sqrt{\epsilon}}}\left(p\right)\right)}\right)^{-1}
\geq \frac{\pi}{8\sqrt{\eps}} \left(1 + \frac{v(n, -\epsilon, \pi/\sqrt{\epsilon})}{\mathrm{vol}\left(B_{\frac{3\pi}{16\sqrt{\epsilon}}}\left(p\right)\right)}\right)^{-1}, 
\end{equation}
where we applied \cite[Theorem 4.3]{CheegerGromovTaylor} with $r=\pi/\sqrt{\eps}$, $r_0=r/4$, and $s=\tfrac{3}{16}r$ in the notation therein.

We have, for a dimensional constant $C(n)$, the following equality
\begin{equation}
\label{volume-incgt}
v(n, -\epsilon, \pi/\sqrt{\epsilon}) = \int\limits_0^{\pi/\sqrt{\epsilon}}\left(\frac{\sinh(s \sqrt{\epsilon})^{n-1}}{\sqrt{\epsilon}}\right)^{n-1} \de s = \frac{1}{(\sqrt\epsilon)^n} \int\limits_0^\pi \sinh(t)^{n-1} \de t = C(n) \frac{1}{(\sqrt\epsilon)^n}.
\end{equation}
Plugging \eqref{volume-cond} and \eqref{volume-incgt} into \eqref{cgt-applied} then yields
\[
\mathrm{inj}(p) \geq \frac{\pi}{8 \sqrt{\epsilon}} \left(1 + \overline{C} \epsilon^{-\frac\alpha2}\right)^{-1},
\]
where $\overline{C}=\overline{C}(C(n),C)$. All in all, we proved
\begin{equation*}
\label{inj-final}    
\mathrm{inj} (p) \geq \mathrm{min}\left\{\frac{\pi}{\sqrt{\epsilon}},  \frac{\pi}{8 \sqrt{\epsilon}} \left(1 + \overline{C} \epsilon^{-\frac\alpha2}\right)^{-1}  \right\}.    
\end{equation*}
Since $\alpha < 1$, the right-hand-side of the previous inequality diverges at infinity when $\varepsilon\to 0$, implying that $(M^n,g)$ has asymptotically diverging injectivity radius.
\end{proof}

As a consequence of \cref{prop:C0Asymp}, \cref{lem:CGT}, and \cref{thm:MainRicNcGH} we get the following isoperimetric existence result under curvature and volume conditions.

\begin{cor}\label{cor:Main1}
Let $(M^n ,g)$ be a complete Riemannian manifold with asymptotically vanishing sectional curvature. Moreover, assume that there exists a compact set $\mathcal{C}$ such that that $\ric\geq 0$ on $M\setminus\mathcal{C}$, and moreover there exist $\alpha<1$ and $C>0$ such that $\vol(B_r(p))\geq Cr^{n-\alpha}$ for any ball $B_r(p)\Subset M\setminus\mathcal{C}$ with $r>1$. Then, for every $V>0$ there exists an isoperimetric region of volume $V$. 
\end{cor}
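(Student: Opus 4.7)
The plan is to combine three already-established results from the paper into a short chain of implications, with no new argument of substance required. First, I would verify the background hypothesis \eqref{eqn:H}: completeness is assumed, and the volume lower bound $\vol(B_r(p)) \geq Cr^{n-\alpha}$ for arbitrarily large radii in an end of $M^n$ forces both noncompactness and infinite volume. Thus the goal reduces to checking the hypotheses of \cref{thm:MainRicNcGH} with $k=0$, which amounts to producing GH-asymptoticity to the simply connected model of constant sectional curvature $0$, i.e., to $(\R^n,g_{\mathrm{eu}})$.

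Second, I would apply \cref{lem:CGT} directly: its hypotheses---asymptotically vanishing sectional curvature together with the volume lower bound $\vol(B_r(p)) \geq Cr^{n-\alpha}$ for $B_r(p)\Subset M^n\setminus \mathcal{C}$ with $r>1$---coincide exactly with what is assumed. Hence $(M^n,g)$ has asymptotically diverging injectivity radius in the sense of \cref{def:ADIR}.

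Third, with asymptotically vanishing sectional curvature (the case $k=0$ of \cref{def:AVSC}) and asymptotically diverging injectivity radius both in hand, \cref{prop:C0Asymp} yields that $(M^n,g)$ is $C^0$-locally asymptotic, and in particular GH-asymptotic, to $(\R^n,g_{\mathrm{eu}})$. Since by assumption $\ric \geq 0 = (n-1)\cdot 0$ on $M^n\setminus \mathcal{C}$, all hypotheses of \cref{thm:MainRicNcGH} with $k=0$ are satisfied, and that theorem immediately delivers the existence of an isoperimetric region of any volume $V>0$.

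The main obstacle is not at this final assembly step, but lies upstream in the ingredients being cited: the injectivity-radius estimate of \cref{lem:CGT}, which ultimately rests on the Cheeger--Gromov--Taylor bound together with Rauch comparison to rule out short conjugate points, and the existence theorem \cref{thm:MainRicNcGH}, whose proof combines the delicate asymptotic mass decomposition of \cref{thm:MassDecomposition} with the Morgan--Johnson comparison \cref{lem:ComparisonMJ}. Granted those two technical inputs, the present corollary is obtained by simply chaining them with \cref{prop:C0Asymp}.
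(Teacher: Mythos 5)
Your proposal is correct and follows exactly the route the paper intends: the corollary is stated as a direct consequence of \cref{lem:CGT}, \cref{prop:C0Asymp}, and \cref{thm:MainRicNcGH} with $k=0$, which is precisely the chain you assemble. The preliminary verification of \eqref{eqn:H} is a reasonable addition (noncompactness is already implicit in \cref{def:AVSC}), and no further argument is needed.
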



The volume condition in the statement of \cref{cor:Main1} is automatically satisfied on manifolds $(M^n, g)$ with nonnegative Ricci curvature, asymptotically vanishing sectional curvature, and Euclidean volume growth, that is, $\mathrm{AVR}(M^n, g)>0$.

Indeed, in such a case, by Bishop-Gromov we have $\mathrm{AVR}(M^n, g) \omega_n r^n \leq \mathrm{vol} (B_r(p)) \leq \omega_n r^n$ for any $p \in M^n$ and any $r>0$. We observe also that, since such a condition on the volume of balls is needed to hold just outside some compact set, we actually get the existence of isoperimetric regions of any volume on any compact perturbation of a complete Riemannian manifold with asymptotically vanishing sectional curvature, $\ric \geq 0$, and Euclidean volume growth.
 
\begin{cor}\label{cor:AVR}
Let $(M^n, \tilde{g})$ be a complete Riemannian manifold with nonnegative Ricci curvature, asymptotically vanishing sectional curvature, and Euclidean volume growth. Let $(M^n, g)$ be a compact perturbation of $(M^n, \tilde{g})$, that is, there exists a compact set $\mathcal{C}$ such that $\tilde{g} = g$ on $M^n \setminus \mathcal{C}$. Then, for every $V>0$ there exists an isoperimetric region of volume $V$.
\end{cor}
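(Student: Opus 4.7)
The strategy is to reduce the statement to the already-established \cref{cor:Main1} applied to $(M^n, g)$. That corollary requires three conditions: asymptotically vanishing sectional curvature, nonnegative Ricci curvature outside a compact set, and the polynomial volume lower bound $\mathrm{vol}_g(B_r^g(p)) \geq C r^{n-\alpha}$ for some $\alpha<1$, $C>0$, valid for all balls $B_r^g(p) \Subset M^n \setminus \mathcal{C}$ with $r>1$. I would take the same compact set $\mathcal{C}$ for all three conditions as for the perturbation.

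The first two conditions are immediate. Since $g=\tilde g$ on $M^n\setminus\mathcal{C}$ and $\mathcal{C}$ is compact, sectional and Ricci curvatures literally coincide on the end, so $g$ inherits asymptotically vanishing sectional curvature from $\tilde g$ and $\ric_g \geq 0$ on $M^n\setminus\mathcal{C}$. The required background properties \eqref{eqn:H} for $g$ follow by noting that, by compactness of $\mathcal{C}$, the two smooth metrics are globally Lipschitz equivalent: there exist $C_1, C_2>0$ with $C_1\tilde g\leq g\leq C_2\tilde g$, hence $\sqrt{C_1}\,d_{\tilde g} \leq d_g \leq \sqrt{C_2}\,d_{\tilde g}$, from which completeness transfers to $g$; noncompactness and infinite volume are obvious since the two spaces differ only on a compact set.

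The only step requiring a brief argument is the volume lower bound. I would start from Euclidean volume growth of $\tilde g$, which by Bishop--Gromov comparison and the definition of $\mathrm{AVR}$ (see \cref{def:AVR}) yields
\[
\mathrm{vol}_{\tilde g}\bigl(B_\rho^{\tilde g}(p)\bigr) \;\geq\; \mathrm{AVR}(M^n,\tilde g)\,\omega_n\,\rho^n \qquad \forall\,p\in M^n,\ \forall\,\rho>0.
\]
For any ball $B_r^g(p)\Subset M^n\setminus\mathcal{C}$, the Lipschitz equivalence above gives the inclusion $B_{r/\sqrt{C_2}}^{\tilde g}(p)\subset B_r^g(p)\subset M^n\setminus\mathcal{C}$, a region where $g=\tilde g$ and hence the two volume measures coincide. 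Combining the two observations,
\[
\mathrm{vol}_g\bigl(B_r^g(p)\bigr) \;\geq\; \mathrm{vol}_{\tilde g}\bigl(B_{r/\sqrt{C_2}}^{\tilde g}(p)\bigr) \;\geq\; \frac{\mathrm{AVR}(M^n,\tilde g)\,\omega_n}{C_2^{n/2}}\, r^n,
\]
which is strictly stronger than the $r^{n-\alpha}$ bound required by \cref{cor:Main1} for any $\alpha\in(0,1)$.

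All hypotheses of \cref{cor:Main1} being verified for $(M^n,g)$, that corollary furnishes an isoperimetric region of every prescribed volume $V>0$, concluding the proof. I anticipate no real obstacle: the entire argument is a verification, and the only subtlety worth isolating is the comparison between $g$- and $\tilde g$-balls on the end, which is absorbed entirely by the trivial global Lipschitz equivalence coming from the compactness of the perturbation region.
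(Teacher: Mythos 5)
Your proposal is correct and follows the same route as the paper: both reduce to \cref{cor:Main1} by observing that curvature conditions transfer verbatim to the end where $g=\tilde g$, and that Bishop--Gromov plus $\mathrm{AVR}(M^n,\tilde g)>0$ gives $\mathrm{vol}_{\tilde g}(B_\rho^{\tilde g}(p))\ge \mathrm{AVR}(M^n,\tilde g)\,\omega_n\rho^n$, which is far stronger than the required $r^{n-\alpha}$ bound. Your explicit use of the global bi-Lipschitz equivalence of $g$ and $\tilde g$ to compare $g$-balls with $\tilde g$-balls on the end is a detail the paper leaves implicit, and it is handled correctly.
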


It is interesting to observe that \cref{cor:AVR} applies to Perelman's example constructed in \cite{PerelmanExampleCones} (see also \cite[Section 8]{ChCo1}), that is a complete Riemannian manifold $(M^n,g)$ with nonnegative Ricci curvature, Euclidean volume growth, asymptotically vanishing sectional curvature (it satisfies a quadratic decay), and admitting non-isometric asymptotic cones. We recall that an asymptotic cone to a manifold $(M^n,g)$ at some $x\in M^n$ is the pGH limit of the sequence of metric spaces $(M^n,r_i^{-1}\dist, x)$ for some diverging sequence $r_i\to+\infty$. In particular, we deduce that the non-uniqueness of asymptotic cones is not an obstruction to the existence of isoperimetric regions, even in the case of $\ric\ge 0$ and Euclidean volume growth.




\begin{remark}[Asymptotically Euclidean and conical manifolds]\label{rem:AsymptoticallyEuclideanAndConical}
A direct application of \cref{cor:AVR} implies that every compact perturbation of an ALE manifold with $\ric\geq 0$, see e.g. \cite[Definition 4.13]{AgostinianiFogagnoloMazzieri}, has isoperimetric regions for any volume. Indeed, it is immediately checked that an ALE manifold with $\ric\geq 0$ has Euclidean volume growth and asymptotically vanishing sectional curvature.

An application of \cref{cor:AVR} implies also that every compact perturbation of a $C^2$-asymptotically conical manifold (in the sense of \cite{ChodoshEichmairVolkmann17}) with $\ric\geq 0$ has isoperimetric regions for every volume. Indeed, every $C^2$-asymptotically conical manifold has asymptotically vanishing sectional curvature and Euclidean volume growth. We remark that in \cite[Theorem 3]{ChodoshEichmairVolkmann17} the authors prove that a $C^{1,\alpha}$-asymptotically conical manifold (without further bounds on $\ric$) has isoperimetric regions for large volumes, and they describe the structure of isoperimetric regions with large volumes for $C^{2,\alpha}$-asymptotically conical manifolds. 
\end{remark}

Our results allow to generalize the applications on asymptotically conical manifolds considered in \cref{rem:AsymptoticallyEuclideanAndConical} to manifolds which are suitably asymptotic to warped products with $\ric\ge 0$ and asymptotically vanishing sectional curvature. We discuss this observation in the next remark.

\begin{remark}[Warped products with $\ric\ge0$ and asymptotically vanishing sectional curvature]\label{rem:Warped}
Let $(W,\widetilde g)$ be an arbitrary warped product defined by
\[
W:=(0,+\infty) \times L, \qquad \widetilde g:= \de r^2 + f(r)^2 g_L,
\]
where $(L,g_L)$ is a compact Riemannian manifold and $f:(0,+\infty)\to (0,+\infty)$ is a smooth function. Let $(M^n, g)$ be a complete Riemannian manifold such that there exists a compact set $\mathcal{K}\subset M^n$ such that $(M^n\setminus\mathcal K,  g)$ is isometric to $((a,+\infty)\times L , \widetilde g)$ for some $a\ge 0$.

We want to show here that if
\begin{equation}\label{eqn:Condition}
\lim_{r\to+\infty} f(r) = +\infty,
\qquad
\text{and $W$ has asymptotically vanishing sectional curvature},
\end{equation}
then $(M^n, g)$ has asymptotically diverging injectivity radius.
Observe that, by a direct computation of the Riemann tensor, the asymptotic vanishing of the sectional curvature is ensured every time $f'=o(f)$ and $f''=o(f)$ as $r\to+\infty$.

We now prove the latter claim after \eqref{eqn:Condition}. Indeed, since the sectional curvature is asymptotically vanishing, arguing as in the proof of \cref{lem:CGT}, for a given $\eps \in (0,1)$ it suffices to estimate from below the length $\ell$ of a geodesic loop based at $p\in M\setminus \mathcal{K}_\eps$  by $\ell \ge C /\eps$ for a constant $C$ independent of $\eps$, and for some compact set $\mathcal{K}_\eps \supset \mathcal{K}$.
We identify $M\setminus \mathcal K$ with $((a,+\infty)\times L , \widetilde g)$. Take $\widetilde{\mathcal{K}}_\eps $ such that $|\sect|<\eps$ on $M\setminus \widetilde{\mathcal{K}}_\eps$. As in \cref{lem:CGT}, we can consider $\mathcal{K}_\epsilon \supset \widetilde{\mathcal{K}}_\eps$ such that for every $p\in M\setminus \mathcal{K}_\eps$ we have $\dist(p,\widetilde{\mathcal{K}}_\eps) > \pi/\sqrt{\eps}$.
Also, without loss of generality, up to eventually enlarging $\mathcal{K}_\eps$, we can just estimate a geodesic loop $\gamma$ based at $p$ such that $\gamma:[0,1]\to ((a_\eps,+\infty)\times L, \widetilde g)$ and $a_\eps$ is such that $f(r)\ge 1/\eps$ for $r\ge a_\eps$. We have $\gamma=(\gamma_1,\gamma_2)$, and $\gamma_2'(0)\neq 0$, for otherwise $\gamma$ would be tangent to $(a_\eps,+\infty)$ and $\gamma$ would not be closed. Then $\gamma_2$ is a nonconstant continuous curve in $L$. For $S\subset L$, it can be shown by the direct computation of the second fundamental form of the isometric embedding $(a',+\infty)\times S \hookrightarrow ((a',+\infty)\times L, \widetilde g)$ that $S$ is a totally geodesic submanifold of $(L,g_L)$ if and only if so is $(a',+\infty)\times S$ in $((a',+\infty)\times L, \widetilde g)$, for any $a'$. This implies that $\gamma_2$ is a geodesic loop in $(L,g_L)$, up to reparametrization.
%
%
%
Hence the length of $\gamma$ is estimated from below by
\[
\begin{split}
\ell &= \int_0^1 \left(|\gamma_1'(t)|^2 + f^2(\gamma_1(t)) g_L(\gamma_2'(t),\gamma_2'(t)) \right)^{1/2} \de t \\
& \ge \frac{\sqrt 2}{2} \left( \int_0^1 |\gamma_1'(t)| \de t +  L(\gamma_2)\min_{[0,1]}f(\gamma_1(t)) \right) \\
&\ge \frac{\sqrt 2}{2} \left( \int_0^1 |\gamma_1'(t)| \de t +  {\rm syst}(L)\, \min_{[0,1]}f(\gamma_1(t)) \right),
\end{split}
\]
where ${\rm syst}(L)>0$ denotes the systole of $(L,g_L)$, that is the length of the shortest geodesic loop in $(L,g_L)$.
By construction, the above estimate implies that
\[
\ell \ge  \frac{\sqrt 2}{2}  {\rm syst}(L)\frac1\eps.
\]
Hence we conclude that the injectivity radius is asymptotically diverging.

\smallskip

Therefore, if in addition to \eqref{eqn:Condition}, we have that $\ric\geq 0$ outside a compact set of $M^n$, then \cref{prop:C0Asymp} applies, $(M^n ,g)$ is GH-asymptotic to the Euclidean space $\R^n$, and by \cref{thm:MainRicNcGH} there exist isoperimetric regions of any volume.

It is clear that the very same conclusion holds true if we assumed that $(M^n ,g)$ satisfies $\ric\geq 0$ outside a compact set and that $M^n$ is just $C^2$-asymptotic to $(W,\widetilde g)$.

\smallskip

We observe that, for examples, the Bryant type solitons mentioned in \cref{rem:GHandC0} fit in this setting of warped products.
\end{remark}



\appendix

\section{Comparison results in Riemannian Geometry}\label{sec:BishopGromov}

We write down a complete statement of a rather classical comparison result in Geometric Analysis, i.e., the Bishop--Gromov--G\"unther volume and area comparison under Ricci curvature lower bounds and sectional curvature upper bounds. The conclusions \eqref{1}, \eqref{2}, \eqref{4}, \eqref{5}, and the rigidity part of \cref{thm:BishopGromov} are consequences, e.g., of \cite[Theorem 3.101]{lafontaine}, \cite[Theorem 1.2 and Theorem 1.3]{SchoenYauLectures}, and the arguments within their proofs. We stress that in the case of a Ricci lower bound, the balls are not required to stay within the cut-locus as first realized by Gromov, see, e.g., \cite[Theorem 1.132]{chow-ricci}. Finally, the conclusion \eqref{3} follows from \cite[Corollary 2.22, item (i)]{PigolaRigoliSetti} and the coarea formula, while \eqref{6} follows verbatim from the proof of \cite[Corollary 2.22, item (i)]{PigolaRigoliSetti} by using \eqref{4} and concluding again with the coarea formula.

We also stress that in the forthcoming \cref{thm:BishopGromov} we do not actually assume the curvature bounds on all $M^n$, but just on an open subset $\Omega\subset M^n$. Consequently the conclusions hold for balls contained inside $\Omega$: indeed, the proofs of the classical geometric comparison theorems leading to \cref{thm:BishopGromov} can be localized, see, e.g., \cite[Remark 2.6]{PigolaRigoliSetti}. For a comparison result assuming more general Ricci lower bounds, we also refer the reader to \cite[Theorem 2.14]{PigolaRigoliSetti}.
\begin{thm}[Volume and perimeter comparison]
\label{thm:BishopGromov}
Let $(M^n,g)$ be a  complete Riemannian manifold, and let $\Omega \subset M^n$ be a on open subset such that $\ric\geq (n-1)k$ on $\Omega$ in the sense of quadratic forms for some $k \in \R$. Let us set $T_k:=+\infty$ if $k\leq 0$, and $T_k:=\pi/\sqrt{k}$ if $k>0$. Then, for every $p\in \Omega$ and for $r \leq T_k$ such that $B_r(p) \Subset \Omega$ the following hold
    \begin{align}
    \label{1}
    &\frac{\vol(B_r(p))}{v(n,k,r)}\to 1\,\text{as $r\to 0$ and it is nonincreasing}, \\
    \label{2}
    &\frac{P(B_r(p))}{s(n, k, r)}\to 1 \,\text{as $r\to 0$ and it is almost everywhere nonincreasing}, \\
    \label{3}
    &\frac{P(B_r(p))}{s(n, k, r)} \leq \frac{\vol(B_r(p))}{v(n,k,r)}\,\text{almost everywhere}.
    \end{align}
    Moreover, if one has $\vol(B_{\overline r}(p))=v(n,k,\overline r)$ for some $\overline r\leq T_k$ such that $B_{\overline{r}}(p)\Subset\Omega$, then $B_{\overline r}(p)$ is isometric to the ball of radius $\overline r$ in the simply connected model of constant sectional curvature $k$ and dimension $n$.

Conversely, let $(M^n,g)$ be a complete Riemannian manifold, and let $\Omega\subset M^n$ be an open subset such that $\sect\leq k$ on $\Omega$, for some $k\in\mathbb R$. Then, for every $p\in\Omega$ and for $r \leq \min\{T_k,\inj(p)\}$ such that $B_r(p) \Subset \Omega$ the following hold
\begin{align}
\label{4}
    &\frac{\vol(B_r(p))}{v(n,k,r)}\to 1\,\text{as $r\to 0$ and it is nondecreasing}, \\
    \label{5}
    &\frac{P(B_r(p))}{s(n, k, r)}\to 1 \,\text{as $r\to 0$ and it is nondecreasing}, \\
    \label{6}
    &\frac{P( B_r(p))}{s(n, k, r)} \geq \frac{\vol(B_r(p))}{v(n,k,r)}. 
    \end{align}
    Moreover, if one has $\vol(B_{\overline r}(p))=v(n,k,\overline r)$ for some $\overline r\leq \min\{T_k,\inj(p)\}$ such that $B_{\overline r}(p)\Subset\Omega$, then $B_{\overline r}(p)$ is isometric to the ball of radius $\overline r$ in the simply connected model of constant sectional curvature $k$ and dimension $n$. 
\end{thm}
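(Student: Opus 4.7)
The plan is to reduce everything to the standard Bishop--Gromov--G\"unther machinery, namely a Riccati/Jacobi field analysis along radial geodesics, and then verify that the arguments localize to the open set $\Omega$ under the sole assumption that the geodesic ball $B_r(p)$ is relatively compact in $\Omega$. For the first part, I would begin by fixing $p \in \Omega$ and considering, for each unit vector $v \in T_pM^n$, the Jacobi fields along the radial geodesic $\gamma_v(t)$ vanishing at $p$. Taking a trace, the curvature assumption $\ric \geq (n-1)k$ along $\gamma_v$ (which holds as long as $\gamma_v([0,t]) \subset \Omega$) combined with the matrix Riccati inequality yields the pointwise Laplacian comparison $\Delta r \leq (n-1)\mathrm{sn}_k'(r)/\mathrm{sn}_k(r)$ away from the cut locus of $p$. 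Gromov's well-known trick extends this inequality to a distributional one on all of $B_r(p)$, so that no cut-locus issue remains provided $B_r(p) \Subset \Omega$ (see \cite[Theorem 1.132]{chow-ricci} and \cite[Remark 2.6]{PigolaRigoliSetti}).

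Integrating the Laplacian comparison against radial test functions, or equivalently examining the Jacobian of the exponential map in polar coordinates, gives that $t \mapsto P(B_t(p))/s(n,k,t)$ is almost everywhere nonincreasing with limit $1$ as $t\to 0$, which is \eqref{2}. A further integration via the coarea formula $\vol(B_r(p)) = \int_0^r P(B_t(p))\,\de t$ yields \eqref{1}, and the classical averaging principle (a nonincreasing function is dominated by its running average) produces \eqref{3}, exactly as in \cite[Corollary 2.22]{PigolaRigoliSetti}. For the rigidity statement, equality $\vol(B_{\overline{r}}(p)) = v(n,k,\overline{r})$ propagates back through the integrations to force the Laplacian comparison to be an equality at every radius and along every radial direction. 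This means all radial Jacobi fields coincide with the model ones, which by polarization of the Jacobi equation forces $\sect \equiv k$ throughout $B_{\overline{r}}(p)$ and also rules out cut points inside; a standard gluing via the exponential map then delivers the isometry with the model ball.

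For the second part, the argument is the Rauch-comparison dual of the first, and is actually simpler because the restriction $r \leq \min\{T_k,\inj(p)\}$ guarantees that $\exp_p$ is a diffeomorphism onto $B_r(p)$ and that no cut-locus has to be crossed. Under $\sect \leq k$ on $\Omega$, Rauch's theorem provides the reverse comparison for Jacobi fields along radial geodesics contained in $\Omega$, hence the reverse Laplacian bound $\Delta r \geq (n-1)\mathrm{sn}_k'(r)/\mathrm{sn}_k(r)$ on $B_r(p)$. The same integration scheme then produces \eqref{4}, \eqref{5}, and the averaging-type inequality \eqref{6}, and the rigidity follows in the same way: equality of volumes forces equality in the Jacobi field comparison, hence constant sectional curvature $k$ in the ball.

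The main obstacle — and the only genuinely nontrivial point beyond quoting textbook Jacobi field comparison — is the proper localization to $\Omega$. One has to check that every step above (the Riccati equation, the distributional Laplacian comparison across cut points in the Ricci case, and the rigidity argument that invokes completeness of radial geodesics) uses curvature information only at points of $B_r(p) \subset \Omega$. This is precisely the content of \cite[Remark 2.6]{PigolaRigoliSetti} in the Ricci case and follows automatically in the sectional case since $r \leq \inj(p)$ keeps all relevant geodesics inside $\Omega$. Once this localization is granted, the proof reduces to assembling the conclusions \eqref{1}--\eqref{6} from the cited references.
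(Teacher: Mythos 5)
Your proposal is correct and follows essentially the same route as the paper, which likewise assembles the statement from the classical Jacobi/Riccati comparison machinery (citing Gallot--Hulin--Lafontaine, Schoen--Yau, and Pigola--Rigoli--Setti), derives \eqref{3} and \eqref{6} from the monotonicity of the perimeter ratios via the coarea formula and the averaging principle, invokes Gromov's observation to cross the cut locus in the Ricci case, and singles out the localization to $\Omega$ as the only point requiring comment. No gaps.
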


In the case of sectional curvature bounds, it can be proved that the above result strengthens and yields a comparison between metric tensors. 

 \begin{lemma}[Comparison of metrics]\label{lem:ComparisonMetrics}
 Let $(M^n ,g)$ be a complete Riemannian manifold and fix $p\in M^n$. For every $k\in\mathbb R$, let $T_k$ be as in \cref{thm:BishopGromov}. Denote by $r$ the distance from $p$, let $R=\inj(p)$, and let $\{x^i\}_{i=1}^n$ be geodesic normal coordinates at the point $p$. Through the latter coordinates, let us identify $B_R(p)$ with the Euclidean ball $\mathbb B^n_R$, and let us denote by $g_1$ the canonical metric on $\mathbb S^{n-1}$. Then the following statements hold true
 \begin{itemize}
    \item[(i)] if $\sect(\nabla r \wedge X) \le k$ for any $X\perp \nabla r$ with $g(X,X)=1$, then the inequality $g \ge g_k := \d r^2 + \sn_k(r)^2g_1$ holds in the sense of quadratic forms on $\mathbb B^n_\rho$, where $\rho:=\min\{R,T_k\}$,
    
     \item[(ii)] if $\sect(\nabla r \wedge X) \ge k$ for any $X\perp \nabla r$ with $g(X,X)=1$, then the inequality $g \le g_k := \d r^2 + \sn_k(r)^2g_1$ holds in the sense of quadratic forms on $\mathbb B^n_\rho$, where $\rho:=\min\{R,T_k\}$.
 \end{itemize}
 \end{lemma}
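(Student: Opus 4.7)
The plan is to prove both statements via Jacobi field comparison (Rauch's theorem) applied in geodesic normal coordinates. Identify $\mathbb{B}^n_\rho$ with a subset of $B_R(p)$ via $\exp_p$, so that proving $g \ge g_k$ (respectively $g \le g_k$) amounts to comparing the pullback $\exp_p^\ast g$ with $g_k$ as quadratic forms on $\mathbb{B}^n_\rho$. Fix $v = r\xi \in \mathbb{B}^n_\rho$ with $|\xi|_{\mathrm{eu}} = 1$ and $0 < r < \rho$, and let $\gamma(t) \eqdef \exp_p(t\xi)$ for $t \in [0, r]$; note that $\gamma$ is a unit-speed geodesic with no conjugate points in $[0,r]$ whenever $r < T_k$ (in case (ii)) or automatically (in case (i)).

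For $w \in T_v\mathbb{B}^n_\rho \cong T_p M$, introduce the Jacobi field $Y_w$ along $\gamma$ determined by $Y_w(0)=0$ and $Y_w'(0) = w$. A direct computation using the variation $\gamma_s(t) \eqdef \exp_p(t(\xi + sw))$ gives $Y_w(t) = t\,d\exp_p|_{t\xi}(w)$, so in particular $d\exp_p|_v(w) = Y_w(r)/r$. For the radial choice $w = \xi$, $Y_\xi(t) = t \dot\gamma(t)$, hence $|d\exp_p|_v(\xi)|_g = 1$. For $w = u$ with $u \perp \xi$, the Gauss lemma ensures $Y_u$ stays perpendicular to $\dot\gamma$ and $d\exp_p|_v(u) \perp_g d\exp_p|_v(\xi)$. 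The decomposition $w = a\xi + u$ therefore yields
\[
(\exp_p^\ast g)|_v(w,w) = a^2 + \frac{|Y_u(r)|_g^2}{r^2}.
\]

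The next step translates polar coordinates into the same decomposition. A tangent vector $w = a\xi + u$ at $v = r\xi$, rewritten in polar coordinates $(\rho,\eta) \in (0,+\infty) \times \mathbb{S}^{n-1}$, has $\dot\rho(0) = a$ and $\dot\eta(0) = u/r \in T_\xi \mathbb{S}^{n-1} = \xi^\perp$, so that $g_1(\dot\eta,\dot\eta) = |u|_{\mathrm{eu}}^2/r^2$. By the very definition $g_k = \d r^2 + \mathrm{sn}_k(r)^2 g_1$, this gives
\[
g_k|_v(w,w) = a^2 + \mathrm{sn}_k(r)^2 \frac{|u|_{\mathrm{eu}}^2}{r^2}.
\]
Comparing the two expressions reduces both statements to controlling $|Y_u(r)|_g$ against $\mathrm{sn}_k(r)|u|_{\mathrm{eu}}$.

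This last comparison is exactly the content of the Rauch comparison theorem applied to the orthogonal Jacobi field $Y_u$ with $Y_u(0)=0$ and $|Y_u'(0)|=|u|_{\mathrm{eu}}$. Under the upper sectional bound $\sect(\nabla r \wedge X) \le k$ in (i), Rauch yields $|Y_u(r)|_g \ge \mathrm{sn}_k(r)|u|_{\mathrm{eu}}$ for all $r \in (0,\min\{R,T_k\})$, and hence $(\exp_p^\ast g)|_v(w,w) \ge g_k|_v(w,w)$; under the lower bound $\sect(\nabla r \wedge X) \ge k$ in (ii), the reverse Rauch inequality $|Y_u(r)|_g \le \mathrm{sn}_k(r)|u|_{\mathrm{eu}}$ holds for $r < \min\{R,T_k\}$, giving $(\exp_p^\ast g)|_v \le g_k|_v$. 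The main technical point to watch is the admissible range: in (i) the restriction $r < T_k$ ensures $\mathrm{sn}_k$ is strictly positive, while in (ii) it is crucial to avoid conjugate points of the model space so that Rauch's hypothesis is valid. With these choices the desired inequalities hold as quadratic forms at every $v \in \mathbb{B}^n_\rho$.
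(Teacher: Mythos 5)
Your proof is correct, but it follows a genuinely different route from the paper's. The paper works with the singular polar frame $v_j=x^1\partial_j-x^j\partial_1$, writes $g=\d r^2+h_{ij}\,w^i\otimes w^j$, computes $\partial_r h_{ij}=2\nabla^2r(v_i,v_j)$, and invokes the \emph{Hessian comparison theorem} to get the differential inequality $\partial_r\bigl(h_{ij}/\sn_k(r)^2\bigr)\ge 0$, which it then integrates from the normal-coordinate asymptotics $h_{ij}=r^2\overline h_{ij}(1+O(r^2))$; this forces it to work off the set $\{x^1=0\}$ and extend by continuity. You instead compare Jacobi fields directly: the identity $d\exp_p|_v(w)=Y_w(r)/r$ together with the Gauss lemma gives the clean pointwise formula $(\exp_p^*g)|_v(w,w)=a^2+|Y_u(r)|_g^2/r^2$, and Rauch's theorem does the rest. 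The two routes are of course cousins (Hessian comparison and Rauch both descend from the Riccati/Jacobi equation), but yours avoids the frame singularity and the continuity extension entirely, at the price of invoking the full Rauch theorem rather than only its Hessian corollary. One point to fix in the write-up: you have the conjugate-point bookkeeping for Rauch swapped. In case (i) the no-conjugate-point hypothesis falls on the \emph{model} geodesic (the space of larger curvature), and this is exactly what the restriction $r<T_k$ buys you — it is not merely about positivity of $\sn_k$; in case (ii) the hypothesis falls on $\gamma$ in $M$ itself, and there $r<T_k$ does \emph{not} help (with $\sect\ge k$ conjugate points can occur before $T_k$) — what saves you is $r<R=\inj(p)$. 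Since you restrict throughout to $r<\rho=\min\{R,T_k\}$, both requirements are in fact met and the argument stands; only the stated justifications need to be corrected.
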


\section{Boundedness of isoperimetric regions}\label{sec:BoundIsopRegions}

In this part we prove that having at disposal a Euclidean-like isoperimetric inequality for merely \emph{small} volumes suffices to imply that isoperimetric regions on a complete Riemannian manifold are bounded. This is a technical fact that we will employ several times. The proof is based on a rather classical argument already appearing in \cite[Proposition 3.7]{RitRosales04} and in \cite[Lemma 13.6]{MorganBook} in the Euclidean setting, and in \cite[Theorem 3]{Nar14} on Riemannian manifolds. However, we present here a rather self-contained proof for the convenience of the reader, pointing out that the weak assumption of a Euclidean-like isoperimetric inequality for small volumes is sufficient for the assertion.

\begin{thm}
\label{iso-bounded-general}
Let $(M^n,g)$ be a complete Riemannian manifold. Assume that there is $v_0>0$ such that the isoperimetric inequality
$$
c_0\vol(\Omega)^{(n-1)/n}\leq P(\Omega),
$$
holds true with some $c_0 > 0$ for any finite perimeter set $\Omega\subset M^n$ with $\vol(\Omega)<v_0$. Then the isoperimetric regions of $(M^n, g)$ are bounded.
\end{thm}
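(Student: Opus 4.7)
The plan is to argue by contradiction, assuming that the isoperimetric region $E$ of volume $V_0 \in (0,+\infty)$ is not essentially contained in any ball. Fix a point $o \in M^n$ and introduce the tail functions
\begin{equation*}
    V(r) \eqdef \vol(E \setminus B_r(o)), \qquad A(r) \eqdef \mathcal{H}^{n-1}(E \cap \partial B_r(o)).
\end{equation*}
Then $V$ is nonincreasing and $V(r) \to 0$ as $r \to +\infty$, while unboundedness of $E$ forces $V(r) > 0$ for every $r > 0$. By the coarea formula (\cref{rem:PerimeterMMS}) applied to the distance function from $o$, $V$ is absolutely continuous with $-V'(r) = A(r)$ for a.e.\ $r$, and moreover, arguing as in \eqref{eq:PerimeterIntersection}, for a.e.\ $r$ one has $P(E \setminus B_r(o)) = P(E, M^n \setminus \overline{B_r(o)}) + A(r)$ (and similarly with $E \cap B_r(o)$ replacing $E \setminus B_r(o)$). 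The aim is to deduce a differential inequality $-V'(r) \geq C \, V(r)^{(n-1)/n}$ for some $C > 0$ and all sufficiently large $r$, which, since $(n-1)/n < 1$, forces $V$ to vanish in finite time and yields the desired contradiction.

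To obtain this inequality I would combine two ingredients. First, for $r$ so large that $V(r) < v_0$, the small-volume isoperimetric inequality applied to the tail $F_r := E \setminus B_r(o)$ gives
\begin{equation*}
    c_0 V(r)^{\frac{n-1}{n}} \leq P(F_r) = P(E, M^n \setminus \overline{B_r(o)}) + A(r).
\end{equation*}
Second, I would construct a competitor by cutting the tail and replacing it with a ball: let $B_s(q_r)$ be a ball with $\vol(B_s(q_r)) = V(r)$, chosen disjoint from $E \cap B_r(o)$. Such a ball exists for large $r$ by the noncompactness of $M^n$ and by the continuity of the ball volume with respect to the radius, using an argument analogous to the one in \cref{lem:ProfileOnBoundedSets}. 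Then $E_r := (E \cap B_r(o)) \cup B_s(q_r)$ has the same volume as $E$, so the isoperimetric property of $E$ gives $P(E) \leq P(E_r) = P(E \cap B_r(o)) + P(B_s(q_r))$, which unravels, using the additivity of the perimeter measure across $\partial B_r(o)$ for a.e.\ $r$, into
\begin{equation*}
    P(E, M^n \setminus \overline{B_r(o)}) \leq A(r) + P(B_s(q_r)).
\end{equation*}
Adding $A(r)$ and combining with the first inequality yields $c_0 V(r)^{(n-1)/n} \leq 2 A(r) + P(B_s(q_r))$.

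The main obstacle is the estimate of $P(B_s(q_r))$: as $V(r) \to 0$ the radius $s$ tends to $0$, and by local Euclidean comparison one knows $P(B_s(q_r)) \sim n \omega_n^{1/n} V(r)^{(n-1)/n}$, while the isoperimetric hypothesis forces $c_0 \leq n\omega_n^{1/n}$, so a naive subtraction would not leave a positive coefficient. The resolution is to iterate and refine the comparison by choosing the center $q_r$ and the scale $s$ carefully (possibly at a scale much smaller than the one prescribed by the full tail, compensating by a further cut-and-paste step), so as to absorb the term $P(B_s(q_r))$ into a fraction of $c_0 V(r)^{(n-1)/n}$. This is the delicate part of the argument. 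Once the clean bound $-V'(r) \geq C V(r)^{(n-1)/n}$ is in hand, the substitution $W = V^{1/n}$ turns it into $W'(r) \leq -C/n$, forcing $W$---and hence $V$---to hit $0$ at some finite $r_*$, contradicting $V(r) > 0$ for all $r > 0$.
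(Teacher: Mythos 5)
Your setup, the first inequality $c_0 V(r)^{(n-1)/n}\le P(E,M\setminus \overline{B_r(o)})+A(r)$, and the final ODE step are all fine and match the paper. But the competitor you build to bound $P(E,M\setminus\overline{B_r(o)})$ has a gap that you correctly identify but do not close, and in fact \emph{cannot} close along the route you propose. If you restore the lost volume $V(r)$ by adjoining any disjoint set $S_r$ with $\vol(S_r)=V(r)$, then for $r$ large enough that $V(r)<v_0$ the hypothesis of the theorem applies to $S_r$ itself and forces $P(S_r)\ge c_0 V(r)^{(n-1)/n}$. Plugging this into your combined inequality $c_0 V(r)^{(n-1)/n}\le 2A(r)+P(S_r)$ leaves only $0\le 2A(r)$: the gain from the isoperimetric inequality on the tail is exactly cancelled by the unavoidable cost of the added set, no matter how cleverly you choose the center, the scale, or how many cut-and-paste steps you iterate. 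The obstruction is structural, not a matter of sharpening constants.

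The idea you are missing is to restore the volume by a \emph{deformation of $E$ itself} rather than by adding a disjoint set: this is what the paper does. Fix $R$ with $P(E,B_R(p_0))>0$ and a compactly supported vector field $X$ in $B_R(p_0)$ with $\int\langle X,\nu\rangle\,\mathrm{d}|D\chi_E|>0$; flowing $E$ along $X$ for a small time produces, for every $|\eps|<\eps_0$, a set $F$ with $F\Delta E\Subset B_R(p_0)$, $\vol(F)=\vol(E)+\eps$ and $P(F,B_R(p_0))\le P(E,B_R(p_0))+C|\eps|$. Taking $\eps=V(r)$ and the competitor $\widetilde F=F\cap B_r(p_0)$, the perimeter cost of the volume adjustment is \emph{linear} in $V(r)$, i.e. $CV(r)=o\bigl(V(r)^{(n-1)/n}\bigr)$ as $V(r)\to0$, which is what allows the subtraction to leave the coefficient $c_0/2$ in front of $V(r)^{(n-1)/n}$ and hence the differential inequality $c_0 V^{(n-1)/n}\le \tfrac{c_0}{2}V^{(n-1)/n}-2V'$ you need. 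Without this first-order (volume-fixing variation) mechanism, or some equivalent device producing a perimeter cost that is $o(V^{(n-1)/n})$, the proof does not go through.
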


\begin{proof}
Let $E$ be an isoperimetric region and fix a point $p_0\in M^n$. Let, for every $r>0$,
\[
V(r)\eqdef \vol(E \setminus B_r(p_0)), 
\qquad
A(r)\eqdef P(E, M\setminus B_r(p_0)).
\]
By hypothesis there exists $r_0>0$ such that for any $r\ge r_0$ the volume $V(r)$ is sufficiently small to apply the isoperimetric inequality. In particular, for almost every $r\ge r_0$ we can write
\[
| V'(r) | + A(r) = \mathcal{H}^{n-1}(\partial B_r(p_0) \cap E) + P(E, M\setminus B_r(p_0)) = P(E \setminus B_r(p_0)) \ge c_0 V(r)^{\frac{n-1}{n}}.
\]
We want to prove that
\begin{equation}\label{eq:GoalBddIsopRegion}
    A(r) \le | V'(r) |  + C V(r) ,
\end{equation}
for some constant $C$, and for almost every $r$ sufficiently big. Combining  with the previous inequality, in this way we would get
\[
c_0 V(r)^{\frac{n-1}{n}} \le  C V(r)  + 2| V'(r) | \le \frac{c_0}{2}V(r)^{\frac{n-1}{n}} - 2 V'(r),
\]
because $| V'(r) | = - V'(r) $ and $ C V(r)\le \tfrac{c_0}{2}V(r)^{\frac{n-1}{n}} $ for almost every sufficiently big radius. Hence ODE comparison implies that $V(r)$ vanishes at some $r=\overline{r}<+\infty$, i.e., $E$ is bounded as a set of finite perimeter.

So we are left to prove \eqref{eq:GoalBddIsopRegion}. Let $R>0$ be fixed such that $P(E, B_R(p_0))>0$. There exists $\eps_0=\eps_0(R,E)>0$ and $C=C(R,E)>0$ such that for any $\eps\in (-\eps_0,\eps_0)$ there is a finite perimeter set $F$ with
\begin{equation}\label{eq:LocalModificationsSets}
    F\Delta E \Subset B_R(p_0),
    \qquad
    \vol(F)=\vol(E)+\eps,
    \qquad
    P(F,B_R(p_0)) \le P(E,B_R(p_0))+ C|\eps|.
\end{equation}
Indeed, this follows by the fact that, since the gradient of the characteristic function $\chi_E$ is represented by a measure $\nu|D\chi_E|$ where $\nu:M\to T M^n$ with $|\nu|=1$ at $|D\chi_E|$-a.e. point, and $P(E,\Omega) = \sup \left\{\int \scal{X,\nu} \de |D\chi_E| \st X \in \mathfrak{X}(\Omega), \supp X \Subset \Omega, |X|\le 1 \right\}$ for any open set $\Omega$, we can take a field $X$ with $|X|\le 1$ and compact support in $B_R(p_0)$ such that $ \int \scal{X,\nu} \de |D\chi_E| \ge \tfrac12P(E,B_R(p_0))>0$. Then, for small $t$, there is a smooth family of diffeomorphisms $\phi_t$ such that $\phi_0={\rm id}$ and $\partial_t \phi_t |_0 = X$. So the sets $F_t\eqdef \phi_t(E)$ verify the expansions
\[
\begin{split}
    \vol(F_t) &= \vol(E) + t \int \scal{X,\nu} \de |D\chi_E| + O(t^2),\\
    P(F_t,B_R(p_0)) &= P(E,B_R(p_0)) + t \int ( \div X - \scal{ \nabla_\nu X , \nu } )  \de |D\chi_E| + O(t^2).
\end{split}
\]
The above formulas for the variations of volume and perimeter are easily checked to hold on $M^n$ by the same computations carried out in the Euclidean space in \cite[Theorem 17.5 \& Proposition 17.8]{MaggiBook}. Since $\int \scal{X,\nu} \de |D\chi_E|>0$ and $\left|\int ( \div X - \scal{ \nabla_\nu X , \nu } )  \de |D\chi_E|\right| \le C(R,E)$, \eqref{eq:LocalModificationsSets} follows by taking $\eps_0(R,E)$ sufficiently small and then $F=F_{t_{\eps}}$ for the suitable $t_\eps$.

Now consider $r>R$ such that $V(r)<\eps_0$, and set $\eps=V(r)$. Then there is $F$ satisfying \eqref{eq:LocalModificationsSets}. Define also $\widetilde{F}= F \cap B_r(p_0)$, so that
\[
\vol(\widetilde{F}) = \vol(F) - \vol(F\setminus B_r(p_0)) = \vol(F) - \vol(E\setminus B_r(p_0)) = \vol(E) + \eps - \eps = \vol(E).
\]
Moreover, for almost every such $r$ we can additionally require that $P(F,\partial B_r(p_0))\eqdef \int_{\partial B_r(p_0)} \de |D\chi_F|=0$, as $|D\chi_F|$ is a finite Radon measure, see \cite[Proposition 2.16]{MaggiBook}. In this way (see \cite[Theorem 16.3]{MaggiBook}) we have
\[
\begin{split}
P(\widetilde{F}) &= P(F,B_r(p_0)) + \mathcal{H}^{n-1}(\partial B_r(p_0) \cap F) \\
&=P(F) - P(F,M\setminus B_r(p_0)) + \mathcal{H}^{n-1}(\partial B_r(p_0) \cap F).
\end{split}
\]
Since $E$ is an isoperimetric set we estimate
\[
\begin{split}
P(E)
& \le P(\widetilde{F})    = P(F) - P(E,M\setminus B_r(p_0)) + \mathcal{H}^{n-1}(\partial B_r(p_0) \cap E) \\
&\le P(E) + C\eps - A(r) + |V'(r)|,
\end{split}
\]
that is $A(r) \le |V'(r)| +  C V(r)$. Hence we see that \eqref{eq:GoalBddIsopRegion} holds for almost every $r>R$ such that $V(r)<\eps_0$, and the proof is completed.
\end{proof}

\printbibliography[heading=bibintoc,title={References}]

\typeout{get arXiv to do 4 passes: Label(s) may have changed. Rerun}
\end{document}